  \renewenvironment{thebibliography}[1]{%
    \begin{oldthebibliography}{#1}%
      \setlength{\parskip}{0ex}%
      \setlength{\itemsep}{0ex}%
  }%
  {%
    \end{oldthebibliography}%
  }
 \def\a{\alpha}                             
                 \def\i{\iota}                
 \def\g{\gamma}                              
                \def\lmb{\lambda}              \def\sg{\sigma}
 \def\e{\epsilon}                               \def\t{\tau}
        \def\n{\nu}                  
                 \def\x{\xi}                  \def\ph{\phi}
                                                           \def\ps{\psi}
			\def\bbR{\mathbb{R}}
			\def\bbN{\mathbb{N}}
			\def\bbQ{\mathbb{Q}}
           \newcommand{\less}{\setminus}
           \newcommand{\un}{\cup}
           \newcommand{\Un}{\bigcup}
              \def\ex{\exists}
           \def\sset{\subseteq}        
\def\mc{\mathcal}
\def\mfk{\mathfrak}
\DeclareMathAlphabet{\mathpzc}{OT1}{pzc}{m}{it}
\def\Ran{\bbR_{\textnormal{an}}}
\newcommand{\rst}[1]{\!\!\upharpoonright_{#1}}
\newcommand{\norm}[1]{\left|\left|#1\right|\right|}
\newcommand{\md}[1]{\left|#1\right|}
\newcommand{\tr}[1]{#1^{\textrm{trans}}}
\renewcommand {\d}[2] {\frac{\partial #1}{\partial #2}}
\newcommand\addtag{\refstepcounter{equation}\tag{\theequation}}
\newcommand{\blob}{\times}
\theoremstyle{definition}
\newtheorem{defn}{Definition}[section]
\theoremstyle{plain}
\newtheorem{theorem}[defn]{Theorem}
\newtheorem{lemma}[defn]{Lemma}
\newtheorem{proposition}[defn]{Proposition}
\newtheorem{corr}[defn]{Corollary}
\theoremstyle{remark}
\newtheorem{remark}[defn]{Remark}
\newtheorem{notation}[defn]{Notation}
\numberwithin{equation}{defn}
\begin{document}

\title[Effective Pila--Wilkie bounds for unrestricted Pfaffian surfaces]{Effective Pila--Wilkie bounds for unrestricted Pfaffian surfaces}

\author{Gareth O. Jones}
\thanks{First author supported by UK Engineering and Physical Sciences Research Council (EPSRC) Grants EP/J01933X/1 and EP/N007956/1.}
\address{School of Mathematics, University of Manchester, Oxford Road, Manchester M13 9PL, UK}
\author{Margaret E. M. Thomas}
\thanks{Second author supported by German Research Council (DFG) Grant TH 1781/2-1; the Zukunftskolleg, Universit\"at Konstanz; the Fields Institute for Research in Mathematical Sciences, Toronto, Canada; the Ontario Baden--W\"urttemberg Foundation; and Canada Natural Sciences and Engineering Research Council (NSERC) Discovery Grant RGPIN 261961.}
\address{Department of Mathematics and Statistics, McMaster University, 1280 Main Street West, Hamilton, Ontario, Canada, L8S 4K1}
\address{Zukunftskolleg, Fachbereich Mathematik und Statistik, Fach 216, Universit\"atsstra\ss e 10, Universit\"at Konstanz, D-78457 Konstanz, Germany}
\begin{abstract}
We prove effective Pila--Wilkie estimates for the number of rational points of bounded height lying on certain surfaces defined by Pfaffian functions. The class of surfaces to which our result applies includes, for instance, graphs of unrestricted Pfaffian functions defined on the plane.
\end{abstract}
\keywords{Pfaffian functions, effectivity, rational points, the Pila--Wilkie Theorem}
\subjclass[2010]{03C64 (Primary), 14G05, 14Q20 (Secondary)}
\maketitle
\section{Introduction}
\label{sec:intro}
Suppose that $X \subseteq \mathbb{R}^{n}$ is a set definable in an o-minimal expansion of the real field. The Pila--Wilkie Theorem \cite{PilWil-06} provides a subpolynomial bound on the number of rational points of bounded height lying on the transcendental part of such a set $X$. More precisely, let $X^{\textrm{alg}}$, its \emph{algebraic part}, be the union of all connected, infinite, semi-algebraic subsets of $X$, and let $X^{\textrm{trans}}$, its \emph{transcendental part}, be the complement of $X^{\textrm{alg}}$ in $X$. Given a rational point $\overline{q} = (\frac{a_1}{b_1},\ldots,\frac{a_{n}}{b_{n}}) \in \mathbb{Q}^{n}$, where $\gcd{(a_{i},b_{i})}=1$ for each $i=1,\ldots,n$, the height of $\overline{q}$ is $H(\overline{q}) = \max_{1\leq i \leq n}\{|a_i|,|b_i|\}$. The Pila--Wilkie Theorem states that, for any positive real number $\epsilon$ and any $T\geq 1$, there are at most $cT^{\epsilon}$ rational points of height at most $T$ lying on $X^{\textrm{trans}}$, where $c$ is a positive real number depending on $X$ and $\epsilon$.

In fact, Pila and Wilkie proved several stronger statements, including the provision of a constant $c$ which is uniform across the fibres of a definable family $Z \subseteq \mathbb{R}^{m} \times \mathbb{R}^{n}$. Analogous bounds were moreover established by Pila in \cite{Pil-09-alg} for algebraic points of bounded height and degree, where the constant $c$ depends on $X$, $\epsilon$ and a bound $k$ on the degree of the algebraic points. Pila and Habegger then extended the result further \cite{HabPil-16}. These results all share a common feature with the earlier work of Pila \cite{Pil-04, Pil-05} on subanalytic surfaces, in that the proof does not provide a method for computing the constant $c$ effectively in terms of $\epsilon$, some definition of $X$ and, if applicable, $k$. Indeed, at the level of generality of the class of all sets $X$ definable in an o-minimal expansion of the real field, it is not so clear what it might mean for the constant to be effective.

Despite this, the question of the existence of an effective constant remains valid in certain cases, and indeed it is an interesting question in view of the many applications of the Pila--Wilkie Theorem to diophantine geometry. One setting in which there is certainly a reasonable measure of complexity is in a reduct of $\Ran$, the expansion of the real field by all restricted analytic functions, in which all functions added to the real field are assumed to satisfy some reasonable differential equations. Binyamini \cite{Bin-17} has recently shown a rather general result in this direction.

The main result of this paper goes beyond the setting of reducts of $\Ran$.
We give an effective version of the Pila--Wilkie Theorem for surfaces, under the assumption that the surface has a certain definition in terms of Pfaffian functions. An analytic function $f$ on an open subset of $\bbR^n$ is Pfaffian if it satisfies a triangular system of polynomial differential equations. We give precise definitions of the setting in which we work in the next section. For now, note that Pfaffian functions have a natural measure of complexity given by the dimension of the open set, the number of equations in the differential equation system and the degrees of the polynomials involved. A precise statement of our main result (Corollary \ref{cor:main_surface_unres}) will be given and proved in Section \ref{sec:counting}. For now, in the interests of simplicity and by way of example, we state a theorem which is an immediate consequence of our main result.

\begin{theorem} \label{thm:unrescountingintro}
Suppose that $f:\bbR^2\to \bbR$ is Pfaffian. Let $\epsilon>0$. There exists a positive constant $c$ depending only on the complexity of $f$ and on $\epsilon$, and effectively computable from them, with the following property. For all $T\geq 1$, the transcendental part of the graph of $f$ contains at most $cT^\epsilon$ rational points of height at most $T$.
\end{theorem}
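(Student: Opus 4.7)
The plan is to reduce the unrestricted problem to a controlled counting problem on a bounded portion of the graph and then apply the main surface-counting result of the paper. The first observation is that any rational point $(x,y,z) \in \bbQ^3$ of height at most $T$ lying on the graph $X$ of $f$ automatically satisfies $|x|, |y|, |z| \leq T$ and $z = f(x,y)$; thus the counting problem takes place on $X \cap [-T,T]^3$. The goal is then to apply the main result (Corollary \ref{cor:main_surface_unres}) to this portion of $X$, after checking that the graph of an unrestricted Pfaffian $f : \bbR^2 \to \bbR$ belongs to the class of Pfaffian surfaces for which effective Pila--Wilkie bounds have been established in the paper.

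To prepare for this application, I would first observe that $f\rst{[-T,T]^2}$ is a restricted Pfaffian function whose complexity is controlled effectively in terms of the complexity of $f$ and of $T$. Then I would partition $[-T,T]^2$ into a controlled number of cells, using Khovanskii-type bounds to ensure that the number of connected components of the sets cut out by sub-Pfaffian conditions on $f$ and its partial derivatives is polynomial in $T$ and in the complexity of $f$. On each cell, the partial derivatives of $f$ up to a prescribed order $r$ should admit explicit, polynomial-in-$T$ estimates.

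The third step is effective reparametrization: on each cell I would construct $C^r$ maps from $[0,1]^2$ onto it whose $r$-norms are explicitly bounded, drawing on existing effective reparametrization results for restricted Pfaffian functions. Once such a reparametrization is available with a number of pieces polynomial in $T$, a direct application of the Bombieri--Pila determinant method, with $r$ chosen sufficiently large in terms of $\epsilon$ and the complexity of $f$, yields the desired bound $cT^\epsilon$ on the number of rational points of height at most $T$ on the transcendental part of this portion of the graph.

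The hard part will be the effective reparametrization in the unrestricted setting: existing reparametrization theorems are phrased for restricted Pfaffian functions, and one must control how both the number of maps and their $r$-norms depend on $T$, ensuring that this dependence is at worst polynomial so that it can be absorbed into $T^\epsilon$ by taking $r$ sufficiently large. A secondary, technical issue is that rational points lying on $X^{\textrm{alg}}$ must be excluded from the count; this is typically handled either by an inductive argument on dimension once the determinant method isolates algebraic components, or via a Pila--Wilkie style block decomposition of the reparametrized image.
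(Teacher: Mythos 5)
There is a genuine gap, and it sits exactly where you flag the difficulty. Your plan restricts $f$ to $[-T,T]^2$ and accepts that the number of cells, the number of parametrizing charts, and the derivative bounds may grow polynomially in $T$, hoping to absorb this into $T^{\epsilon}$ by taking $r$ (equivalently the degree $d$) large. This cannot work: in the determinant method, increasing $d$ only shrinks the exponent $\epsilon(2,3,d)$ attached to $T$ in the \emph{per-chart} count of hypersurfaces (Proposition \ref{prop:PWmain}); it does nothing to a global factor $T^{c_0}$ coming from polynomially many charts. If the parametrization has on the order of $T^{c_0}$ pieces, the method can only ever give a bound of order $T^{c_0+\epsilon(d)}$, which is not $cT^{\epsilon}$ once $\epsilon<c_0$. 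Moreover, the intermediate claim that the partial derivatives of $f$ up to order $r$ admit polynomial-in-$T$ estimates on $[-T,T]^2$ is false for general Pfaffian functions (a chain $e^{x}, e^{e^{x}},\ldots$ already has doubly exponential growth on $[-T,T]$), so even the preparatory step is unjustified. Incidentally, you also undersell Khovanskii: his bound, and the Gabrielov--Vorobjov stratification, depend only on the Pfaffian complexity and not at all on the size of the box, so ``polynomial in $T$'' is the wrong shape of statement for the number of connected components as well.

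What rescues the argument --- and what the paper actually uses --- is uniformity in the restriction, not polynomial dependence on it. By Lemma \ref{lem:resIP} and Corollary \ref{cor:resIP=IRP}, the restriction of an $IP(B)$ function to any bounded subdomain whose closure lies in the original domain is in $IRP(B')$ with $B'$ effective in $B$ alone, independently of the subdomain; consequently the whole parametrization machinery of Sections \ref{sec:curves} and \ref{sec:surfaces} and the restricted counting result (Theorem \ref{thm:main_surface_res_inbox}) produce constants depending only on $B'$ and $\epsilon$, uniformly over all restrictions. The unrestricted statement then follows by noting that, for each fixed $T$, the finitely many rational points of height at most $T$ lie in a bounded, suitably shrunk piece of the domain, applying the uniform restricted theorem there, and observing that the resulting constant is independent of the shrinking parameter and hence of $T$ (Theorem \ref{thm:main_surface_unres}); unbounded coordinates and values are handled by the height-preserving inversions $x\mapsto\pm x^{\pm1}$ together with effective monotonicity (Corollary \ref{cor:main_surface_unres}), of which Theorem \ref{thm:unrescountingintro} is an immediate special case. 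The exclusion of points on the algebraic part is likewise done effectively via Proposition \ref{prop:JT12_zeroest}, rather than by an ineffective block decomposition. Your proposal never identifies this uniformity, and the polynomial-in-$T$ bookkeeping it substitutes for it does not suffice to reach $cT^{\epsilon}$.
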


There are two improvements in the constant obtained here over that which the Pila-Wilkie Theorem provides for such functions. Of course, one improvement is that the constant here is effective. The other is that it is uniform across the class of all Pfaffian functions of the same complexity. (We obtain a similar uniformity in our main result.) In fact, a high level of uniformity is crucial for our proof in this unrestricted setting. We straightforwardly obtain our main result (and, therefore, Theorem \ref{thm:unrescountingintro}) from a result for surfaces defined by restricted Pfaffian functions that is uniform in the restriction taken (Theorem \ref{thm:main_surface_res_inbox}); most of the work in this paper goes into proving that result.  This uniformity across complexity is new, even for the original ineffective setting of the Pila--Wilkie Theorem. We contrast this with the recent work of Binyamini \cite{Bin-17} alluded to above, which provides an effective constant for sets of all dimensions described by restricted Noetherian functions. These are functions defined in the same way as Pfaffian functions but without the triangularity assumption on the system of differential equations. Our main result only applies to surfaces, but these need not be restricted. Thus there is some overlap in the cases covered by our main result and by Binyamini's result specialised to surfaces, but neither setting contains the other. In the case that there is overlap, the constant obtained in \cite{Bin-17} also depends on input data other than the complexity (as we define it here), such as the coefficients of the polynomials appearing in the system of differential equations and the restriction of the function taken. The effective uniformity that we obtain is a feature which could potentially be exploited in applications, as we discuss below.

In light of recent work of Binyamini and Novikov \cite{BinNov-17-WC} which establishes an improvement to the Pila--Wilkie Theorem, in the form of a polylogarithmic bound, for restricted complex functions with real and imaginary parts which are Pfaffian, it is also worth noting that our main results here apply whenever the functions involved are real Pfaffian, without further conditions being imposed.

As mentioned above, the Pila-Wilkie Theorem and its later variants have been applied to a variety of questions in diophantine geometry. In particular, it features as part of the Pila--Zannier method, based on their proof of the Manin--Mumford Conjecture \cite{PilZan-08}, for addressing problems on unlikely intersections associated with the names Manin, Mumford, Andr\'e, Oort, Zilber, and Pink. For a sample of such applications, we refer the reader to \cite{Zan}, \cite{Sca-12}, \cite{Pil-14} and \cite{JonWil-15}. In fact, we could hope for more than effectivity, as we mentioned above, and we now discuss this further. The abelian logarithms used in the Pila--Zannier proof of the Manin--Mumford Conjecture are known, by an observation of Macintyre \cite{Mac-08}, to be Pfaffian when restricted to appropriate domains. In the elliptic case, this has been developed by Schmidt and the first author \cite{JonSch-17} to obtain an explicit definition of the Weierstrass $\wp$-function associated to an elliptic curve, when the function is restricted to a fundamental domain. With a uniform choice of this domain, the complexity of the definition is independent of the elliptic curve. It is plausible that this can be extended to the abelian case (perhaps with effective rather than explicit estimates). Now, the polynomial Galois bounds involved in the Pila--Zannier proof, when everything is defined over a number field, are conjectured to depend only on the dimension of the abelian variety. If an effective form of this could be proved then, combined with Corollary \ref{cor:main_surface_unres} and the expected uniform definition, the Pila--Zannier strategy could lead to a Manin--Mumford result for curves in abelian varieties with a bound that is effective and that is independent of the abelian variety (when its dimension is fixed). A less dramatic (but also less conjectural!) example of Corollary \ref{cor:main_surface_unres} being applied is presently being worked out by Schmidt and the first author \cite{JonSch-18-relMM}, in the context of the relative Manin--Mumford Conjecture in the case that the group involved is a family of extensions by $\mathbb{G}_m$ of a fixed elliptic curve $E$, with everything defined over the algebraic numbers (this is the `semi-constant' case of \cite{BMPZ}; see also \cite{BerSch-18} for a recent extension beyond the Manin-Mumford setting). The final bound obtained is effective and does not depend on the height of the coefficients of the polynomials defining the curve. The counting result is both effective and uniform in the elliptic curve. This seems to require the uniformity in domain that we establish here. For effective uniformity in the context of additive extensions of a fixed elliptic curve, also via Pfaffian functions, but without the need for Galois bounds or counting, see \cite{JonSch-18-MM}.

Most of the work in this paper is invested in proving a uniform result in the restricted case (Theorem \ref{thm:main_surface_res_inbox}) which we briefly mentioned above. Our proof broadly follows the same outline as that of the proof of the Pila--Wilkie Theorem. In particular, we use a parameterization result, a covering of surfaces by finitely many subsets described by functions with controlled derivatives. Here we cannot appeal directly to the o-minimal version of the parameterization of Yomdin and Gromov \cite{Yom-87-vge, Yom-87-Ckr, Gro-87} that was proved by Pila and Wilkie. Indeed, this result is one of the main sources of ineffectivity in the Pila--Wilkie Theorem, for it involves the use of the compactness theorem (of first-order logic). Our main contribution is an effective version of this parameterization result in the restricted case. To prove this, we first prove a uniform parameterization result for families of curves in the restricted case, where the base of the family is an interval in the real line. The details of these results are rather technical, so we defer the statements to Sections \ref{sec:curves} and \ref{sec:surfaces}. A complication in proving our parameterization results is that we work in a wider setting than that of the graphs of (restricted) Pfaffian functions alone, due to the inductive nature of the proofs and the fact that a key technique we use is an effective decomposition of zero sets, given by Lemma \ref{lem:effzerosetdecomp}, which uses functions which are not necessarily Pfaffian. Instead we consider classes of functions implicitly defined by (restricted) Pfaffian functions, $IP$ and $IRP$, which have important closure properties. For instance, they are closed under differentiation, a property which some other natural classes do not have. In addition, the class $IRP$ has equations that hold at the boundary, and this is a crucial feature for us in working with the limits of functions (see, for example, Lemma \ref{lem:limit}), which is critical at several stages of the proofs.  In working with these functions we exploit an effective stratification result of Gabrielov and Vorobjov \cite{GabVor-95} on several occasions. 

This paper provides a self-contained presentation of an effective and uniform form of the Pila--Wilkie Theorem for unrestricted surfaces in our setting. It is expected that an extension of these methods would yield an analogous result in higher dimensions; however, given the rather technical nature of the proofs, the case of surfaces presents the most appropriate setting for outlining the key ideas involved. Moreover, this case is already of interest in its own right, and is sufficient for the application in the context of the relative Manin--Mumford Conjecture in \cite{JonSch-18-relMM} that is outlined above.

We give precise definitions of all of the concepts introduced here in Sections \ref{sec:prelims} and \ref{sec:curves}. In Section \ref{sec:lemmas} we prove some auxiliary lemmas concerning the behaviour of functions which are implicitly defined from restricted Pfaffian functions. Our proofs of parameterization results in the restricted setting follow in Sections \ref{sec:curves} and \ref{sec:surfaces} (an effective uniform parameterization result for families of curves in Section \ref{sec:curves}, and an effective parameterization result for surfaces in Section \ref{sec:surfaces}). Finally we state and prove the diophantine results of this paper in Section \ref{sec:counting}.

\emph{Acknowledgements.} The authors are grateful to Harry Schmidt for an inspiring conversation which provided a key insight in the unrestricted case.

\section{Preliminaries on Pfaffian functions}
\label{sec:prelims}
We begin by making precise our terminology related to effectivity. When we say that a quantity $N$ is \emph{effective in} certain parameters, we will mean that $N$ is effectively computable from those parameters, and say that $N$ is \emph{bounded effectively in} certain parameters when there is a bound on $N$ which is effectively computable from those parameters.

We now make precise our definitions involving Pfaffian functions and recall various results about them which we will need in later proofs.

\begin{defn} \label{def:Pfaffian}
Let $n$, $r$, $\alpha$, $\beta$ be non-negative integers. A sequence $f_1,\ldots,f_r:U\to \bbR$ of analytic functions on an open set $U\subseteq \bbR^n$ is said to be a \emph{Pfaffian chain} of \emph{order} $r$ and \emph{degree} $\alpha$ if there are polynomials $P_{i,j}\in \bbR [X_1,\ldots,X_{n+j}]$ of degree at most $\alpha$ such that
\begin{equation*}
df_j=\sum_{i=1}^n P_{i,j}(\bar{x},f_1(\bar{x}),\ldots,f_j(\bar{x}))dx_i ,\qquad \textrm{ for all } i=1,\ldots,r \textrm{ and }j=1,\ldots,n.
\end{equation*}
Given such a chain, we say that a function $f:U\to \bbR$ is \emph{Pfaffian of order} $r$ and \emph{degree} $\langle\alpha,\beta\rangle$ with chain $f_1,\ldots,f_r$, if there is a polynomial $P \in \bbR[X_1,\ldots,X_n,Y_1,\ldots,Y_r]$ of degree at most $\beta$ such that $f(\bar{x})=P(\bar{x},f_1(\bar{x}),\ldots,f_r(\bar{x}))$.

Let $B$ be a positive real number. We say that a Pfaffian function has \emph{complexity} at most $B$ if $n,r, \alpha$ and $\beta$ are all at most $B$.
\end{defn}

The following theorem of Khovanskii is the foundation for the theory of Pfaffian functions. It provides an effective bound on the number of connected components of a Pfaffian variety \cite{Kho-80, Kho-91}, when the domain of the function is sufficiently simple.

Given functions $g_1,\ldots,g_k:\bbR^n\to\bbR$, as is customary we write $V(g_1,\ldots,g_k)=\{ \bar{x}\in\bbR^n : g_1(\bar{x})=\cdots = g_k(\bar{x})=0\}$ (and for $k=0$ this is $\bbR^n$). Throughout this paper, a `box' is always understood to be a product of bounded intervals.
\begin{theorem} \label{thm:Khovanskii}
Let $n$, $k$ be non-negative integers, let $B$ be a positive real number and let $W$ be an open box in $\bbR^n$. Suppose that $g_1,\ldots,g_k:W\to\bbR$ are Pfaffian functions with a common chain and complexities bounded by $B$. There exists a positive real number $B'$ which is bounded effectively in $B$ such that the number of connected components of the variety $V(g_1,\ldots,g_k)$ is bounded by $B'$.
\end{theorem}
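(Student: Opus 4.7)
The plan is to follow Khovanskii's classical strategy, reducing the connected-components problem to a Bezout-type bound for isolated non-degenerate solutions of Pfaffian systems, and then proving the latter by induction on the order $r$ of the chain. First, I would reduce to a single equation by observing that $V(g_1,\ldots,g_k) = V(g_1^2+\cdots+g_k^2)$, and the function $g := \sum g_i^2$ is Pfaffian with the same chain and with complexity bounded effectively in $B$ (the degree is at most $2\beta$). Next, to pass from connected components to isolated points, I would introduce an auxiliary generic linear function $\ell$ and apply a Morse-theoretic estimate: every connected component of $V(g)$ contains a critical point of $\ell\rst{V(g)}$, so the number of components is bounded by the number of solutions of a critical-point system of the form $g = 0$, $\nabla g \wedge \nabla \ell = 0$. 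To ensure isolation and non-degeneracy, I would perturb by passing to $g - \varepsilon$ for a regular value $\varepsilon$ (applying Sard's theorem and then taking a limit, with a bookkeeping term to account for the loss of components as $\varepsilon \to 0$; Khovanskii handles this via closure arguments). All the auxiliary equations are Pfaffian in the common chain with complexity bounded effectively in $B$ and $n$.

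Once the problem is reduced to bounding the number of non-degenerate solutions in $W$ of a square Pfaffian system $h_1 = \cdots = h_n = 0$ with a common chain of order $r$, I would proceed by induction on $r$. The base case $r = 0$ is Bezout's theorem (in the form of Oleinik--Petrovskii--Thom--Milnor for real varieties), which gives an effective polynomial bound in $n$ and the degrees. For the inductive step, introduce a new variable $y_r$ in place of the last chain function $f_r$ in all polynomial expressions. This produces a system of $n$ equations on an open box in $\bbR^{n+1}$ involving only the chain $f_1,\ldots,f_{r-1}$, whose zero set is (generically) a finite union of smooth curves; the number of connected components of this 1-dimensional set is bounded effectively by the inductive hypothesis (applied to the critical-point reduction as above, in dimension $n+1$ with chain length $r-1$). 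The original solutions correspond to zeros of the Pfaffian function $f_r(\bar{x}) - y_r$ restricted to this curve. On each smooth arc, Rolle's theorem inserts a zero of the derivative of the restriction between consecutive zeros, and that derivative is a Pfaffian function with chain of length $r-1$ whose complexity is bounded effectively in $B$; applying the inductive hypothesis once more closes the Rolle argument.

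The main obstacle, as always in this line of work, is the non-degeneracy and smoothness assumption needed to use Morse theory and Rolle's theorem cleanly: the varieties that arise inductively are not a priori smooth complete intersections, and the set of critical values used to perturb into general position must itself be controlled so that the effective constants do not blow up. This is handled by Khovanskii's closure/limit technique, in which one argues that components of the unperturbed variety are limits of components of nearby smooth level sets, so the final bound absorbs a bounded constant factor. Effectivity at every stage follows because each reduction produces new Pfaffian data (the squared sum, the gradient conditions, the derivatives along the chain) whose complexity parameters are explicit polynomial expressions in $n,r,\alpha,\beta$, hence bounded effectively in $B$; the final bound $B'$ is therefore obtained by unwinding the double induction and taking the composition of these explicit bounds.
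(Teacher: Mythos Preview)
The paper does not prove this statement at all: Theorem~\ref{thm:Khovanskii} is simply quoted as Khovanskii's theorem, with references \cite{Kho-80, Kho-91}, and then used as a black box throughout. There is no ``paper's own proof'' to compare your proposal against.

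That said, your sketch is a faithful outline of Khovanskii's original argument (sum-of-squares reduction to a single equation, Morse/critical-point reduction to isolated non-degenerate zeros, and the Rolle-type induction on the chain length $r$ with Bezout/Oleinik--Petrovskii--Thom--Milnor as the base case). The caveats you flag about perturbation and non-degeneracy are real and are exactly what Khovanskii's limit/closure technique is designed to handle; if you were actually writing a self-contained proof you would need to make those steps precise, but as a summary of the cited result your proposal is accurate.
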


Our results concern functions implicitly defined from Pfaffian functions, in both restricted and unrestricted settings. These two frameworks need to be defined precisely. We begin with the unrestricted setting.
 
\begin{defn}[IP] \label{def:IP}
Let $n$, $k$ be non-negative integers and let $U\subseteq \bbR^n$ be an open set.
Let $B$ be a positive real number. We say that a function $f:U\to \bbR$ is in the class $IP(B)$, and say that $f$ is \emph{implicitly defined from Pfaffian functions of complexity at most $B$}, if there exist a positive integer $m$, a product of open intervals $V\subseteq \bbR^{n+m}$, an auxiliary map $F=\langle f_1,\ldots,f_m \rangle:U\to \bbR^{m}$, and Pfaffian functions $p_1,\ldots,p_{m}: V\to \bbR$ with a common chain and complexities bounded by $B$, such that $f_1=f$ and the following hold:
\begin{itemize}
\item[(i)] $\text{graph}(F)$ is contained in $V$;
\item[(ii)] $p_i(x,F(x))=0$, for $i=1,\ldots,m$ and all $x \in U$;
\item[(iii)] $\det \left(\d{(p_1,\ldots,p_m)}{(x_{n+1},\ldots,x_{n+m})}\right) (x,F(x))\ne 0$, for all $x \in U$.
\end{itemize}
We say that a map $g:U\to\bbR^{k}$ is in the class $IP(B)$ if each of its coordinate functions $g_{j}\colon U\to \bbR$ is in $IP(B)$, for $j = 1,\ldots, k$. We say that a function $f:U\to \bbR$ is in the class $IP$, and say that it is \emph{implicitly defined from Pfaffian functions}, if $f$ is in the class $IP(B)$ for some positive real number $B$.
\end{defn}

We need an analogue of this definition in which the Pfaffian functions involved are restricted. For this we use the standard device of an algebraic analytic isomorphism from $\bbR$ to $(-1,1)$.
We must first fix some notation. Let $\vartheta\colon \bbR \to (-1,1)$ be the analytic bijection given by $\vartheta(x)=\frac{x}{\sqrt{1+x^2}}$. Note that $\vartheta$ is Pfaffian (on $\bbR$) with chain $x\mapsto \frac{1}{\sqrt{1+x^2}}$ and complexity at most $3$. Its inverse $\vartheta^{-1} = \frac{x}{\sqrt{1-x^2}}$ is also analytic and Pfaffian (on $(-1,1)$) with chain $x\mapsto \frac{1}{\sqrt{1-x^2}}$ and also has complexity at most $3$. For any non-negative integer $n$ we also write $\vartheta:\bbR^n\to (-1,1)^n$ for the map $\langle x_1, \ldots, x_n \rangle \mapsto \langle\vartheta(x_1),\ldots,\vartheta(x_n)\rangle$, i.e. with the function $\vartheta$ applied coordinate-wise.

\begin{defn}[IRP] \label{def:IRP}
Let $n$, $k$ be non-negative integers and let $U\subseteq \bbR^n$ be an open set.
Let $B$ be a positive real number. We say that a function $f:U\to \bbR$ is in the class $IRP(B)$, and say that $f$ is \emph{implicitly defined from restricted Pfaffian functions of complexity at most $B$}, if there exist a positive integer $m$, a product of open intervals $V\subseteq \bbR^{n+m}$, an auxiliary map $F=\langle f_1,\ldots,f_m \rangle:U\to \bbR^{m}$, and Pfaffian functions $p_1,\ldots,p_{m}: V\to \bbR$ with a common chain and complexities bounded by $B$, such that $f_1=f$ and the following hold:
\begin{itemize}
\item[(i)] the closure of $\vartheta(\text{graph}(F))$ is contained in $V$;
\item[(ii)] $p_i(\vartheta(x,F(x)))=0$, for $i=1,\ldots,m$ and all $x \in U$;
\item[(iii)] $\det \left(\d{(p_1,\ldots,p_m)}{(x_{n+1},\ldots,x_{n+m})}\right) (\vartheta(x,F(x)))\ne 0$, for all $x \in U$.
\end{itemize}
We say that a map $g:U\to\bbR^{k}$ is in the class $IRP(B)$ if each of its coordinate functions $g_{j}\colon U\to \bbR$ is in $IRP(B)$, for $j=1,\ldots,k$. We say that a function $f:U\to \bbR$ is in the class $IRP$, and say that it is \emph{implicitly defined from restricted Pfaffian functions}, if $f$ is in the class $IRP(B)$ for some positive real number $B$. 
\end{defn}
Given a positive real number $B$ and a function $f$ lying in $IP(B)$, respectively $IRP(B)$, clearly all of the coordinate functions of any auxiliary map $F$ witnessing this membership must also themselves lie in $IP(B)$, respectively $IRP(B)$. We simply require $f_1 = f$ for convenience.

The class $IRP$ has useful closure properties, as we show below. It is, for instance, closed under differentiation. In addition, we have `equations at the boundary', another feature that will be exploited in our proofs. This combination of properties does not seem to hold for some other, at first sight natural, classes.

Since $\vartheta \colon \bbR \to (-1,1)$ is itself a Pfaffian function, and moreover is an analytic isomorphism, it is easy to see that, for any positive real number $B$, the class $IRP(B)$ is contained in the class $IP(B)$.
The following lemma shows that functions in $IP(B)$ satisfying certain additional conditions also lie in $IRP(B')$, for a complexity $B'$ which is bounded effectively in $B$.
\begin{lemma} \label{lem:resIP}
Let $n$ be a non-negative integer and let $B$ be a positive real number. Suppose that $U \sset \bbR^n$ is a bounded open set and that $f\colon U \to \bbR$ is a function lying in $IP(B)$, witnessed by an auxiliary map $F \colon U \to \bbR^m$, for some positive integer $m$, with $V\subseteq \bbR^{n+m}$ the domain of the associated Pfaffian functions. Suppose that the map $F$ is bounded and that the closure of the graph of $F$ is contained in $V$. There exists a positive real number $B'$ bounded effectively in $B$ such that $f$ lies in $IRP(B')$.
\end{lemma}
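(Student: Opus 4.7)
The plan is to produce explicit $IRP$ witnesses for $f$ from the given $IP$ witnesses by composing with $\vartheta^{-1}$ applied coordinatewise. Write $p_1, \ldots, p_m \colon V \to \bbR$ for the Pfaffian functions witnessing $f \in IP(B)$, with common chain $f_1, \ldots, f_r$, and let $K$ denote the closure of $\text{graph}(F)$. Since $U$ and $F$ are bounded, $K$ is compact, and by hypothesis $K \sset V$; compactness then lets us shrink $V$ to a bounded product of open intervals $V_0$ with $K \sset V_0 \sset V$. Set $\tilde V := \vartheta(V_0)$, which is a product of open intervals contained in $(-1,1)^{n+m}$. The candidate $IRP$ witness for $f$ will be the same auxiliary map $F$ together with functions $\tilde p_i \colon \tilde V \to \bbR$ defined by $\tilde p_i(\bar y) := p_i(\vartheta^{-1}(\bar y))$.

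The main step is to show each $\tilde p_i$ is Pfaffian on $\tilde V$ with complexity bounded effectively in $B$. Setting $g_j(\bar y) := (1 - y_j^2)^{-1/2}$, one checks that $dg_j = y_j g_j^3\, dy_j$, so $g_1, \ldots, g_{n+m}$ is itself a Pfaffian chain on $\tilde V$ of order $n+m$ and degree $4$. Letting $h_k := f_k \circ \vartheta^{-1}$ for $k = 1, \ldots, r$ and using $\partial(\vartheta^{-1})_j/\partial y_i = \delta_{ij} g_j^3$, the chain rule gives $\partial h_k/\partial y_i = P_{i,k}(\vartheta^{-1}(\bar y), h_1, \ldots, h_k)\cdot g_i^3$; substituting $\vartheta^{-1}(y_j) = y_j g_j$ rewrites this as a polynomial of degree at most $2\alpha + 3$ in $\bar y, g_1, \ldots, g_{n+m}, h_1, \ldots, h_k$. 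Hence $g_1, \ldots, g_{n+m}, h_1, \ldots, h_r$ is a common Pfaffian chain for all $\tilde p_i$ on $\tilde V$, and the same substitution in the polynomial expressing $p_i$ through its chain shows $\tilde p_i$ is a polynomial of degree at most $2\beta$ in this extended chain. The resulting complexity is therefore bounded by a $B'$ effectively computable from $B$.

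It remains to verify conditions (i)--(iii) of Definition \ref{def:IRP} for the witness $(F, \tilde p_1, \ldots, \tilde p_m)$. For (i), since $\vartheta$ is a homeomorphism and $K$ is compact, $\overline{\vartheta(\text{graph}(F))} = \vartheta(K) \sset \vartheta(V_0) = \tilde V$. Condition (ii) is immediate from $\vartheta^{-1} \circ \vartheta = \mathrm{id}$ together with $p_i(x, F(x)) = 0$. For (iii), the chain rule gives $\partial \tilde p_i/\partial y_j = (\partial p_i/\partial x_j)(\vartheta^{-1}(\bar y)) \cdot g_j^3(\bar y)$, so at $\vartheta(x, F(x))$ the Jacobian $\det(\partial \tilde p/\partial y_{n+\cdot})$ equals $\det(\partial p/\partial x_{n+\cdot})(x, F(x))$ times the nonzero factor $\prod_{j=1}^m g_{n+j}^3(\vartheta(x, F(x)))$, and is hence nonzero. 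The only step requiring real care is the chain-rule degree tracking in the second paragraph; beyond that the argument is essentially formal, and no serious obstacle is expected.
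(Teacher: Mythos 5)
Your proposal is correct and follows essentially the same route as the paper: shrink the domain to a box containing the compact closure of $\text{graph}(F)$, pull back the Pfaffian witnesses along $\vartheta^{-1}$ coordinatewise, and check conditions (i)--(iii), with the non-vanishing of $(\vartheta^{-1})'$ giving the non-singularity. The only difference is that you spell out the chain-and-degree bookkeeping (adjoining the functions $(1-y_j^2)^{-1/2}$ to the chain) that the paper leaves as an implicit "effectively bounded complexity" claim, which is a harmless elaboration.
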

\begin{proof}
Let $p_{1},\ldots,p_{m}\colon V \to \bbR$ be the Pfaffian functions (whose complexities are at most $B$) witnessing that $f$ lies in $IP(B)$. The closure of the graph of $F$ is a compact set and so there exists an open box $W$ with $\overline{\text{graph}(F)} \sset W$ and $\overline{W} \sset V$. Set $V':=\vartheta(W)$; this is likewise an open box and it contains the closure of $\vartheta(\text{graph}(F))$. For each $i=1,\ldots,m$, define the function $q_{i}\colon V'\to\bbR$ by setting $q_{i}(x_{1},\ldots,x_{n+m}) = p_{i}(\vartheta^{-1}(x_{1}),\ldots,\vartheta^{-1}(x_{n+m}))$. These functions are all Pfaffian on $V'$, with a complexity $B'$ which is bounded effectively in $B$, and clearly we have $q_i(\vartheta(x,F(x)))= p_i(x,F(x))=0$, for all $i=1,\ldots,m$. 
Further, a quick calculation (using the fact that the derivative of $\vartheta^{-1}$ doesn't vanish) shows that the non-singularity condition also holds. 
\end{proof}

We will also make use of the following corollary to this result, which tells us that certain restrictions of functions lying in $IP$ also lie in $IRP$, with a complexity that is independent of the restriction taken.

\begin{corr}\label{cor:resIP=IRP}
Let $n$ be a non-negative integer, let $U \sset \bbR^n$ be an open set and let $B$ be a positive real number. Suppose that $f\colon U \to \bbR$ is a function lying in $IP(B)$. There exists a positive real number $B'$, bounded effectively in $B$, such that, if $W$ is any bounded open set whose closure lies in $U$, then $f\rst{W}$ lies in $IRP(B')$.
\end{corr}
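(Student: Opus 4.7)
The plan is to deduce this corollary directly from Lemma \ref{lem:resIP} applied to $f\rst{W}$, with the witnessing data inherited from the $IP(B)$ data for $f$. The non-trivial point is uniformity: we need a single $B'$, bounded effectively in $B$ alone, that works for \emph{every} bounded open $W$ with $\overline{W}\sset U$.

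First, fix data witnessing $f\in IP(B)$: an auxiliary map $F=\langle f_1,\ldots,f_m\rangle\colon U\to\bbR^m$, an open box $V\sset\bbR^{n+m}$, and Pfaffian functions $p_1,\ldots,p_m\colon V\to\bbR$ with common chain and complexities at most $B$. For an arbitrary bounded open $W$ with $\overline{W}\sset U$, the restriction $F\rst{W}$ clearly witnesses $f\rst{W}\in IP(B)$ with the same $V$ and the same $p_i$: conditions (i), (ii) and (iii) of Definition \ref{def:IP} are inherited pointwise.

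Next, I would verify the two additional hypotheses needed by Lemma \ref{lem:resIP}. Since $F$ is analytic (by the implicit function theorem applied to the analytic equations $p_i(x,F(x))=0$ together with the non-vanishing Jacobian condition), it is in particular continuous on the compact set $\overline{W}\sset U$. Hence $F(\overline{W})$ is compact, so $F\rst{W}$ is bounded; and $\overline{\text{graph}(F\rst{W})}=\text{graph}(F\rst{\overline{W}})\sset \text{graph}(F)\sset V$. Thus Lemma \ref{lem:resIP} applies and produces a positive real number $B'$, bounded effectively in $B$, such that $f\rst{W}\in IRP(B')$.

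The main (and only real) obstacle is to be sure that this $B'$ has no hidden dependence on $W$. Inspecting the proof of Lemma \ref{lem:resIP}, the complexity $B'$ is the complexity of the functions $q_i(x)=p_i(\vartheta^{-1}(x_1),\ldots,\vartheta^{-1}(x_{n+m}))$ on some open box $V'=\vartheta(W_0)$. This complexity is determined by a polynomial composition of $p_i$ (complexity at most $B$) with the coordinatewise applications of $\vartheta^{-1}$ (complexity at most $3$), and so is bounded effectively in $B$ alone, independently of the choice of the box $W_0$ used inside the proof of the lemma, and hence independently of our $W$. This uniformity yields the corollary as stated.
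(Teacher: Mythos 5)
Your proposal is correct and follows essentially the same route as the paper: restrict the witnessing $IP(B)$ data to $W$, use continuity of $F$ on the compact set $\overline{W}$ to get boundedness and the closure-of-graph condition, and then invoke Lemma \ref{lem:resIP}. Your final paragraph on uniformity is a reasonable extra check, but it is already guaranteed by the statement of Lemma \ref{lem:resIP}, since ``bounded effectively in $B$'' means the bound is computable from $B$ alone and so is independent of the particular $W$ chosen.
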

\begin{proof}

Let $m$ be a positive integer, $V$ a product of open intervals, $F\colon U \to \bbR$ an auxiliary map and $p_{1},\ldots,p_{m}\colon V\to \bbR$ a collection of Pfaffian functions, whose complexities are at most $B$, which all together witness that $f$ lies in $IP(B)$. Since the map $F$ is continuous on $U$, it is continuous on the closure of $W$, and hence bounded on $W$. We may now apply Lemma \ref{lem:resIP} to obtain the result.
\end{proof}

For the results in this paper we will frequently make use of the fact that certain named functions lie in the class $IRP$, and that this class is closed in an effective way under various common operations. It is straightforward to check this in the case of composition, i.e. that, given a positive real number $B$ and maps $f, g \in IRP(B)$ such that the composition $g \circ f$ is well defined, there exists a positive real number $B'$ bounded effectively in $B$ such that $g \circ f \in IRP(B')$. 

The class $IRP$ is also closed in this way under taking multiplicative inverses, by combining closure under composition together with the fact that the function $f\colon (0,\infty) \to \bbR$ given by $f(x) = \frac{1}{x}$ lies in the class $IRP$. To see this latter fact, let $f_2 \colon (0,\infty)\to \bbR$ be given by $f_{2}(x)=\frac{1}{\sqrt{1+x^2}}$ and let $F = \langle f, f_{2} \rangle$, and let $p_{1},p_{2}\colon \bbR^{2} \times (-1,1) \to \bbR$ be the Pfaffian functions given by
\begin{align*}
p_{1}(x_1,x_2,x_3) &= x_{1}^{2} + \vartheta^{-1}(x_{3})^{2} - 1,\\
p_{2}(x_1,x_2,x_3) &= \vartheta^{-1}(x_{3})^{2} - x_2.
\end{align*}
Note that $f_2$ has image $(0,1]$, so $\vartheta(f_{2}(x))$ is bounded away from $\pm 1$ on $\bbR$, and hence the closure of $\vartheta(\text{graph}(f))$ is contained in $\bbR^{2} \times (-1,1)$. Clearly we have \[p_{i}(\vartheta(x),\vartheta(f(x)),\vartheta(f_{2}(x))) = 0,\] for $i=1,2$, and from this we see that \[\frac{\partial p_1}{\partial x_3}(\vartheta(x),\vartheta(f(x)),\vartheta(f_{2}(x))) = 2\vartheta^{-1}(x_{3})\frac{d}{dx_{3}}\vartheta^{-1}(x_{3}).\] This does not vanish at a point of the form $\vartheta(f_{2}(x))$, and so it is straightforward to see that the necessary singularity condition is also satisfied.

In order to see that products and sums of functions lying in $IRP$ also lie in $IRP$ with an effectively bounded complexity, it is enough, when combined with closure under composition, to see that the functions $\blob\colon \bbR^{2} \to \bbR$, given by $\langle x_1,x_2\rangle \mapsto x_{1}\cdot x_{2}$, and $+\colon \bbR^{2} \to \bbR$, given by $\langle x_1,x_2\rangle \mapsto x_{1} + x_{2}$, lie in $IRP$. This is witnessed by the following. Let $F_{\blob},F_{+}\colon \bbR^2 \to \bbR^{4}$ be given by
\begin{align*}
F_{\blob}(x_1,x_2) &= \left\langle x_{1} \cdot x_{2}, \frac{1}{\sqrt{1+x_{1}^{2}}}, \frac{1}{\sqrt{1+x_{2}^{2}}}, \frac{1}{\sqrt{1+(x_{1}\cdot x_{2})^{2}}}\right\rangle,\\
F_{+}      (x_1,x_2) &= \left\langle x_{1}  +    x_{2}, \frac{1}{\sqrt{1+x_{1}^{2}}}, \frac{1}{\sqrt{1+x_{2}^{2}}}, \frac{1}{\sqrt{1+(x_{1} +    x_{2})^{2}}}\right\rangle,
\end{align*}
and let $q_{1},\ldots,q_{4}, r_{1},\ldots,r_{4} \colon \bbR^3 \times (-1,1)^3 \to \bbR$ be the Pfaffian functions given by
\begin{align*}
q_{1}(x_1,\ldots,x_6) &=  r_{1}(x_1,\ldots,x_6) = \quad x_{1}^{2} + \vartheta^{-1}(x_{4})^{2} - 1,\\
q_{2}(x_1,\ldots,x_6) &=  r_{2}(x_1,\ldots,x_6) = \quad x_{2}^{2} + \vartheta^{-1}(x_{5})^{2} - 1,\\
q_{3}(x_1,\ldots,x_6) &=  r_{3}(x_1,\ldots,x_6) = \quad x_{3}^{2} + \vartheta^{-1}(x_{6})^{2} - 1,\\
q_{4}(x_1,\ldots,x_6) &= \quad x_{3}\vartheta^{-1}(x_{4})\vartheta^{-1}(x_{5})\vartheta^{-1}(x_{6}) - x_{1}x_{2}\vartheta^{-1}(x_{6})^{2},&\\
r_{4}(x_1,\ldots,x_6) &= \quad x_{3}\vartheta^{-1}(x_{4})\vartheta^{-1}(x_{5})\vartheta^{-1}(x_{6}) - (x_{1}\vartheta^{-1}(x_{5})+x_{2}\vartheta^{-1}(x_{4}))\vartheta^{-1}(x_{6})^{2}.&
\end{align*}
That the functions $\blob$ and $+$ lie in $IRP$ is witnessed by $F_{\blob}$ together with $q_{1},\ldots,q_{4}$, and by $F_{+}$ together with $r_{1}\ldots,r_{4}$, respectively. 
It is easy to see from this that polynomials in the functions of $IRP$ also lie in $IRP$ with effectively bounded complexities, and hence that $\vartheta' \colon \bbR \to \bbR$, $\vartheta'(x) = \left(\frac{1}{\sqrt{1+x^2}}\right)^{3}$ lies in $IRP$ (since it follows from the implicit definitions of $\langle x_1,x_2 \rangle \mapsto x_{1}\cdot x_{2}$ and $\langle x_1,x_2 \rangle \mapsto x_{1}+x_{2}$ that $x\mapsto \frac{1}{\sqrt{1+x^2}}$ also lies in $IRP$ with effectively bounded complexity). With the information now at hand, it follows readily that the class $IRP$ is closed under taking partial derivatives, with effectively bounded complexities, using the observation that $\vartheta'$ is moreover nowhere zero.

We may further observe that the functions $\vartheta \colon \bbR \to (-1,1)$ and $\vartheta^{-1}\colon (-1,1)\to\bbR$ also lie in $IRP$ with complexities at most $3$. In each case, take $m=1$ and $V = \bbR^{2}$. Then the former is witnessed by the Pfaffian function $p(x_1,x_2) = \vartheta(x_1) - x_2$, the latter by the Pfaffian function $q(x_1,x_2) = x_1 - \vartheta(x_2)$.

Finally, if $B$ is a positive real number and $f\colon I \to J$ is a bijection lying in $IRP(B)$, for $I$, $J \subseteq \bbR$, such that $f'(x)\neq 0$, for all $x \in I$, then it is straightforward to check that the inverse $f^{-1}\colon J\to I$ also lies in $IRP(B)$.

A key result we shall need is an effective stratification theorem due to Gabrielov and Vorobjov \cite{GabVor-95}. The sets involved are more general than varieties and therefore we need the following definitions.
\begin{defn}
An \emph{elementary semi-Pfaffian set} $X$ is a set of the form
\begin{displaymath}
\{ \bar{x} \in U : g_1(\bar{x})=\cdots = g_k(\bar{x})=0,h_1(\bar{x})>0,\ldots,h_l(\bar{x})>0 \}
\end{displaymath}
where $g_1,\ldots,g_k,h_1,\ldots,h_l:U\to\bbR$ are Pfaffian functions with a common chain defined on a product of open intervals $U$ in $\bbR^n$. If these functions (which we shall refer to as the functions defining $X$) have complexities at most $B$, and $k$ and $l$ are also at most $B$, then we say that the above set has complexity at most $B$.

An \emph{elementary stratum} $Y$ is an elementary semi-Pfaffian set such that, if $Y$ has codimension $m$, say, then there are, among the functions defining $Y$, some $h_1,\ldots,h_m$ vanishing identically along $Y$ such that $dh_1\wedge \cdots \wedge dh_m \ne 0$ at each point of $Y$.
\end{defn}

\begin{theorem}[\cite{GabVor-95}] \label{thm:G-Vstrat}
Let $n$ be a non-negative integer and let $B$ be a positive real number. Suppose that $U\subseteq \bbR^n$ is a product of open intervals and that $X \sset U$ is an elementary semi-Pfaffian set of complexity at most $B$. There exists a positive real number $B'$, which is bounded effectively in $B$, with the following property. There exists a partition \emph{(stratification)} of $X$ into at most $B'$ smooth (not necessarily connected) elementary strata of complexity at most $B'$, with all functions involved in their definitions having the same chain as the functions defining $X$.
\end{theorem}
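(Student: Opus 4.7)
The plan is to proceed by a recursive procedure on the dimension, at each stage extracting a smooth stratum at the top dimension and reducing the problem to a lower-dimensional elementary semi-Pfaffian set whose complexity can be controlled effectively. Throughout, the aim is to stay within the class of Pfaffian functions sharing the same common chain as the original defining functions, so that Theorem \ref{thm:Khovanskii} can be applied uniformly to bound the number of components appearing at each stage.

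Concretely, suppose $X \sset U$ is defined by $g_1 = \cdots = g_k = 0$, $h_1 > 0, \ldots, h_l > 0$, of complexity at most $B$. First I would refine the partition by the sign patterns of all the $h_j$'s (positive/negative/zero); then for each piece it suffices to handle the case where some subset of the $h_j$'s has been adjoined to the list of vanishing functions while the remaining $h_j$'s are positive. Fix such a piece and let $G_1, \ldots, G_N$ denote the total list of functions vanishing on it. Let $m$ be the desired codimension; the smooth locus I want is the set of points where the rank of the Jacobian $(dG_i)$ equals $m$, with a specified $m$-subset $\{G_{i_1},\ldots,G_{i_m}\}$ realizing the full rank. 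This locus is cut out by demanding that the corresponding $m \times m$ minor of the Jacobian is nonzero and all $(m+1) \times (m+1)$ minors vanish identically. Since the partial derivatives of a Pfaffian function with chain $f_1,\ldots,f_r$ are polynomials in $\bar{x}, f_1,\ldots,f_r$ of effectively controlled degree (directly from Definition \ref{def:Pfaffian}), each such minor is again Pfaffian with the same chain and effectively bounded complexity. Hence the smooth locus is itself an elementary stratum of effectively bounded complexity.

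The points of the piece that fall outside every such smooth locus are precisely those where every $m\times m$ minor of $(dG_i)$ vanishes, together with at least one lower-dimensional non-vanishing condition; this residual set is again elementary semi-Pfaffian of effectively bounded complexity and of strictly smaller dimension. I would then recurse on it. The recursion terminates after at most $n$ steps since the dimension strictly drops, and at each stage the complexity grows only by an effectively bounded amount (depending on $B$ and $n$), so a uniform bound $B'$, effective in $B$, suffices for the complexities of all strata produced. The number of strata at each stage is bounded by the number of choices of sign pattern and of vanishing $m$-subset, times the number of connected components of the associated Pfaffian variety, which is effectively bounded by Theorem \ref{thm:Khovanskii}; summing over the at most $n$ levels of recursion yields the required effective bound on the total number of strata.

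The main obstacle I anticipate is bookkeeping the growth of complexity across the recursion: each time one passes to minors of the Jacobian one introduces new polynomial expressions in the chain functions, and one must verify that the degree bound and number of adjoined polynomials grow only by quantities effectively computable in $B$, rather than depending on the particular shape of $X$. Once this effective stability of the chain is established, the remainder of the argument is a standard rank-stratification combined with Khovanskii's theorem applied at each dimensional step.
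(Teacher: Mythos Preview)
The paper does not supply its own proof of this statement: Theorem~\ref{thm:G-Vstrat} is quoted directly from Gabrielov--Vorobjov \cite{GabVor-95} and used as a black box throughout (in Lemmas~\ref{lem:stratadecomp}, \ref{lem:effzerosetdecomp}, and \ref{lem:limit}). So there is no in-paper argument to compare your proposal against.

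That said, your sketch is broadly the standard rank-stratification strategy and is in the spirit of the original \cite{GabVor-95} argument: peel off the smooth top-dimensional locus using Jacobian minor conditions (which stay Pfaffian with the same chain and effectively bounded degree), and recurse on the residual set. Two points deserve more care if you want to turn this into a self-contained proof. First, the claim that the residual set has \emph{strictly} smaller dimension is not automatic from the description you give; you need to invoke (and make effective) the fact that the singular locus of a semi-Pfaffian set drops in dimension, which in \cite{GabVor-95} is handled via a Sard-type argument together with Khovanskii's bound. Second, your initial refinement by sign patterns of the $h_j$ is unnecessary and slightly misleading: the $h_j$ are already assumed strictly positive on $X$, so no further sign decomposition is needed there; the real work is in stratifying the zero locus of the $g_i$ by Jacobian rank. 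With those adjustments your outline matches the Gabrielov--Vorobjov proof.
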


We will frequently use the following result.
\begin{proposition} \label{prop:impdefzerosetbounds}
Let $n$ be a non-negative integer, let $U\subseteq \bbR^n$ be a product of open intervals and let $B$ be a positive real number. Suppose that $f:U\to \bbR$ is a function lying in the class $IP(B)$. There exists a positive real number $B'$ which is bounded effectively in $B$ such that the number of connected components of $V(f)$ is bounded by $B'$.
\end{proposition}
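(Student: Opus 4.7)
My plan is to reduce the problem to Khovanskii's theorem (Theorem \ref{thm:Khovanskii}) applied to a suitable Pfaffian variety in the ambient $(n+m)$-dimensional space. I would first unpack the $IP(B)$ data: a positive integer $m$, a product of open intervals $V = V_x \times V_y \sset \bbR^{n+m}$, the auxiliary map $F = \langle f_1, \ldots, f_m \rangle$ with $f_1 = f$, and Pfaffian functions $p_1, \ldots, p_m: V \to \bbR$ of complexity at most $B$. The first crucial move would be to shrink $V$ to $V' := U \times V_y$. This remains a product of open intervals (using the hypothesis that $U$ is one), still contains $\text{graph}(F)$, and keeps the $p_i$ Pfaffian with the same complexities; the point of the shrinking is that the $x$-projection of $V'$ is now exactly $U$, which is what will force the closedness property below.

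The central structural claim would then be that $\text{graph}(F)$ is both open and closed in $V(p_1,\ldots,p_m) \cap V'$. Openness comes from the implicit function theorem applied at each point, using the non-singularity condition (iii): $V(p_1,\ldots,p_m)$ is locally the graph of a unique smooth function which, by uniqueness, must coincide with $F$. Closedness follows from continuity of $F$ together with the arranged projection property: any limit $(x_*, y_*) \in V'$ of points $(x_n, F(x_n)) \in \text{graph}(F)$ has $x_* \in U$, which forces $y_* = F(x_*)$. From this, $\text{graph}(F) \cap \{y_1 = 0\}$ is clopen in $W' := V(p_1,\ldots,p_m,y_1) \cap V'$, hence a union of connected components of $W'$.

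To conclude, $V(f)$ is homeomorphic to $\text{graph}(F) \cap \{y_1 = 0\}$ via $x \mapsto (x, F(x))$, so the number of its connected components is bounded above by that of $W'$. The set $W'$ is the common zero set on the open box $V'$ of the $m+1$ Pfaffian functions $p_1, \ldots, p_m, y_1$ (all sharing the chain of the $p_i$) with complexities effectively bounded in $B$, so Theorem \ref{thm:Khovanskii} delivers the required bound. The main obstacle I would anticipate is establishing the closedness of $\text{graph}(F)$ in the ambient Pfaffian variety — limits of points of $\text{graph}(F)$ could otherwise escape $U$ in the $x$-direction and leave $\text{graph}(F)$ non-closed inside $V(p_1,\ldots,p_m)$; this is precisely what necessitates the preliminary shrinking of $V$.
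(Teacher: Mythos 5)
Your proposal is correct and follows essentially the same route as the paper: the paper's proof is exactly the one-line application of Khovanskii's Theorem (Theorem \ref{thm:Khovanskii}) to the system $p_1,\ldots,p_m$ defining $f$ together with the extra equation setting the coordinate $x_{n+1}$ to $0$. Your additional step --- shrinking $V$ to $U\times V_y$ and verifying that $\text{graph}(F)\cap\{x_{n+1}=0\}$ is clopen in the ambient Pfaffian variety, so that the component count transfers to $V(f)$ --- correctly supplies a justification that the paper leaves implicit.
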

\begin{proof} Apply Khovanskii's Theorem \ref{thm:Khovanskii} to the system defining $f$, with an extra equation setting the coordinate corresponding to $f$ to $0$.
\end{proof}

If $f$ is unary and implicitly defined from Pfaffian functions, then this result applied to the derivative of $f$ gives an effective form of monotonicity for $f$. More formally, we have the following.
\begin{proposition} \label{prop:effmon}
Let $a$, $b$ be real numbers such that $a<b$, let $B$ be a positive real number and suppose that $f:(a,b)\to \bbR$ is a bounded function lying in the class $IP(B)$. There exists a non-negative integer $N$, and real numbers $a_{0},\ldots,a_{N+1}$ with $a=a_0<a_1<\ldots<a_N<a_{N+1}=b$, such that $N$ is bounded effectively in $B$ and the function $f$ is monotonic or constant on each interval $(a_i,a_{i+1})$, for $i=0,\ldots, N$.
\end{proposition}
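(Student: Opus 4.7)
The plan follows the hint preceding the statement: apply Proposition \ref{prop:impdefzerosetbounds} to the derivative $f'$ and read off the partition from a finite bound on the connected components of $V(f')$. The two ingredients needed are: (i) that $f'$ lies in $IP(B_1)$ for some $B_1$ effectively bounded in $B$; and (ii) a geometric bookkeeping argument converting a bound on the components of $V(f')$ into the desired partition of $(a,b)$.

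For (i), suppose $F = \langle f_1, \ldots, f_m \rangle$, $V \sset \bbR^{1+m}$, and Pfaffian functions $p_1, \ldots, p_m \colon V \to \bbR$ witness $f \in IP(B)$, with $f = f_1$. Differentiating the identities $p_i(x, F(x)) = 0$ gives
\[
\frac{\partial p_i}{\partial x}(x, F(x)) + \sum_{j=1}^{m} \frac{\partial p_i}{\partial x_{1+j}}(x, F(x))\, f_j'(x) = 0,
\]
for $i = 1, \ldots, m$. Since partial derivatives of Pfaffian functions are Pfaffian with the same chain and effectively bounded complexity, these $m$ new equations extend to Pfaffian equations on an appropriate open box in $\bbR^{1+2m}$ (the product of $V$ with $m$ new real coordinates). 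Adjoin them to the original system and extend the auxiliary map to an ordering of $f_1, \ldots, f_m, f_1', \ldots, f_m'$ with $f_1' = f'$ placed first. The Jacobian of the augmented system with respect to the $2m$ new variables, in the natural block ordering, is block triangular with two copies of the original nonsingular matrix $(\partial p_i / \partial x_{1+j})(x, F(x))$ on its diagonal; column permutation only alters the sign of the determinant, so condition (iii) of Definition \ref{def:IP} is preserved. Hence $f' \in IP(B_1)$ with $B_1$ bounded effectively in $B$.

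For (ii), apply Proposition \ref{prop:impdefzerosetbounds} to $f' \colon (a,b) \to \bbR$ to bound the number of connected components of $V(f')$ by some $N_0$ effectively computable from $B$. Since $f'$ is continuous, $V(f')$ is relatively closed in $(a,b)$, and each of its at most $N_0$ components is either an isolated point or a subinterval. Let $E \sset (a,b)$ consist of the endpoints (in the relative topology of $(a,b)$) of these components, with each singleton component contributing itself; then $|E| \leq 2 N_0$. Enumerate $E$ as $a_1 < \cdots < a_N$ with $N \leq 2N_0$, and set $a_0 = a$, $a_{N+1} = b$. Each open interval $(a_i, a_{i+1})$ is a connected component of $(a, b) \setminus E$ and therefore lies either entirely in the interior of a single component of $V(f')$ (whence $f' \equiv 0$ and $f$ is constant there) or entirely in the complement of $V(f')$ (whence $f'$ has constant sign by continuity and $f$ is strictly monotonic there). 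The main technical content is in (i); step (ii) is straightforward geometric bookkeeping.
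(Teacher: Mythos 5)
Your proposal is correct and follows exactly the route the paper intends, namely the remark immediately preceding the statement: apply Proposition \ref{prop:impdefzerosetbounds} to the derivative of $f$ and convert the resulting bound on the components of $V(f')$ into a partition of $(a,b)$. Your additional details (closure of $IP$ under differentiation via the augmented system with block-triangular Jacobian, and the endpoint bookkeeping) are just the steps the paper leaves implicit, and they are sound; note also that your argument never needs the boundedness hypothesis, which is harmless.
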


\section{Decomposition Lemmas}
\label{sec:lemmas}
In this section we gather together a variety of results which will be useful in the arguments in later sections.
Our focus from now until the end of Section \ref{sec:surfaces} is on proving our effective parameterization result for surfaces implicitly defined from restricted Pfaffian functions (Theorem \ref{thm:effpara_2}). Consequently, all the results in this section will be stated in the restricted setting, for functions lying in the class $IRP$.

We begin by formalising a notion of decomposition in terms of $IRP$ functions for sets in the plane.

\begin{defn}
Let $(a,b)$ be an interval in $\bbR$, with $a \in \{-\infty\}\cup\bbR$ and $b\in \bbR\cup\{+\infty\}$, let $B$ be a positive real number and let $X$ be a subset of $\bbR^2$ of dimension $1$. Suppose that the (possibly infinite) interval $(a,b)$ is the projection of $X$ onto the first coordinate. We say that $X$ has an \emph{$IRP(B)$ decomposition} if there exist a non-negative integer $N$ and positive integers $M_1,\ldots,M_N$ which are all bounded by $B$, as well as real numbers $\eta_1,\ldots,\eta_{N}$ with $a=:\eta_0<\eta_1<\ldots<\eta_N<\eta_{N+1}:=b$, and functions $\phi_{i,j}:(\eta_i,\eta_{i+1})\to \bbR$ lying in $IRP(B)$, for $i=0,\ldots, N$ and $j=1,\ldots, M_i$, such that $\phi_{i,1}<\cdots<\phi_{i,M_i}$, for each $i = 0,\ldots,N$, and
\[
X \setminus \bigcup_{i=1}^N\left(\{\eta_i\}\times \bbR \right)= \bigcup_{i,j} \textrm{graph } (\phi_{i,j}).
\] 
\end{defn}

The following provides effective $IRP$ decompositions for the projections to the plane of bounded elementary strata of dimension $1$.
\begin{lemma}\label{lem:stratadecomp}
Let $n \geq 2$ be an integer and let $B$ be a positive real number. Suppose that $V\subseteq \bbR^n$ is a product of open intervals and that $X\subseteq V$ is a bounded elementary stratum of dimension $1$ and complexity at most $B$, whose closure is contained in $V$. Let $\tilde{\pi} \colon\bbR^n \to \bbR^2$ be the projection map down to the first two coordinates. There exists a positive real number $B'$ which is bounded effectively in $B$ such that the set $\tilde{\pi}(X)$ has an $IRP(B')$ decomposition.
\end{lemma}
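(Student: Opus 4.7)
The plan is to identify effectively a finite set of critical $x_1$-values where $\tilde\pi(X)$ fails to be a disjoint union of graphs of smooth functions, and then to show that between consecutive such values the projection is described by smooth functions lying in $IRP$ of effectively bounded complexity.

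Since $X$ is a $1$-dimensional elementary stratum with complexity at most $B$, among its defining Pfaffian functions there are $n-1$ equations $h_{i_1},\ldots,h_{i_{n-1}}$ whose differentials are linearly independent at each point of $X$. Let $\Sigma\subseteq X$ denote the critical set of the projection $\pi_1\colon X\to\bbR$ onto the first coordinate, i.e.\ the subset where the $(n-1)\times(n-1)$ sub-Jacobian of $(h_{i_1},\ldots,h_{i_{n-1}})$ with respect to $(x_2,\ldots,x_n)$ is singular. This is cut out from $X$ by a polynomial in partial derivatives of Pfaffian functions, hence is a Pfaffian variety of effectively bounded complexity; by Theorem~\ref{thm:Khovanskii} the image $\pi_1(\Sigma)$, together with $\inf\pi_1(X)$ and $\sup\pi_1(X)$, is a finite set of effectively bounded cardinality. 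I list its elements in increasing order as a preliminary partition. Over each open subinterval $I$ between consecutive such values, $X\cap\pi_1^{-1}(I)$ is a disjoint union of smooth arcs each of which projects diffeomorphically onto $I$ under $\pi_1$, and the number $M$ of these arcs is bounded effectively by applying Theorem~\ref{thm:Khovanskii} to the zero-dimensional Pfaffian variety obtained by freezing $x_1$ at any fixed value in $I$.

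For each such arc $A$, the implicit function theorem applied to the nonsingular system $h_{i_1},\ldots,h_{i_{n-1}}$ expresses $A$ as the graph of a smooth map $\psi\colon I\to\bbR^{n-1}$; take $\phi$ as its first coordinate, so that the graph of $\phi$ is $\tilde\pi(A)$. The map $\psi$ is bounded with graph whose closure lies in $V$ (since $X$ itself satisfies these properties), and $\psi$ is implicitly defined from $h_{i_1},\ldots,h_{i_{n-1}}$ via the non-vanishing sub-Jacobian on $I$, so that $\psi$ lies in $IP(B_0)$ for some $B_0$ bounded effectively in $B$. Lemma~\ref{lem:resIP} then yields that $\phi\in IRP(B'')$ for some $B''$ bounded effectively in $B$.

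To ensure the $\phi$-functions on each common subinterval can be sorted strictly, I refine the partition. On any given subinterval $I$, each difference $\phi_{j_1}-\phi_{j_2}$ lies in $IRP$ (hence in $IP$) with effectively bounded complexity by the closure properties established in Section~\ref{sec:prelims}; Proposition~\ref{prop:impdefzerosetbounds} then bounds the number of zeros of $\phi_{j_1}-\phi_{j_2}$ (i.e.\ the crossings of the two graphs) effectively in $B$. Adjoining these finitely many extra first-coordinates to the preliminary partition produces the final list $\eta_1<\cdots<\eta_N$, on each resulting subinterval of which the $\phi$-functions are pairwise disjoint and therefore totally ordered. The main technical obstacle will be the careful bookkeeping of complexities through all these intermediate constructions (the critical-set determinant, the implicit maps produced arc-by-arc, and the differences $\phi_{j_1}-\phi_{j_2}$) so that the final constant $B'$ really depends only effectively on $B$; once this is verified, the result follows from iterated applications of Theorem~\ref{thm:Khovanskii}, the implicit function theorem, Lemma~\ref{lem:resIP} and Proposition~\ref{prop:impdefzerosetbounds}.
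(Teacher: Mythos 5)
Your overall route coincides with the paper's: cut the $x_1$-axis at an effectively bounded set of bad values obtained from Khovanskii's theorem, realise the pieces of $X$ lying over each resulting subinterval as graphs via the implicit function theorem applied to the $n-1$ defining equations with independent differentials, and pass from $IP$ to $IRP$ by Lemma \ref{lem:resIP}. Your final refinement to force the strict ordering $\phi_{i,1}<\cdots<\phi_{i,M_i}$ (discarding identically zero differences and cutting at the finitely many zeros of the remaining ones via Proposition \ref{prop:impdefzerosetbounds}) is sound, and is in fact spelled out more explicitly than in the paper.

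There is, however, a genuine gap in your choice of cut points: you remove only $\pi_1(\Sigma)$ together with $\inf\pi_1(X)$ and $\sup\pi_1(X)$, and then assert that over each complementary subinterval $I$ every arc of $X$ projects diffeomorphically \emph{onto} $I$. This is false. An elementary stratum is cut out by equations together with strict inequalities, and an arc of $X$ can simply terminate at a frontier point where one of the inequalities degenerates, lying over an interior point of $I$ at which the sub-Jacobian is perfectly nonsingular. Concretely, in $\bbR^2$ take $V=(0,1)^2$ and $X=\{\langle x_1,x_2\rangle\in V : (x_2-\tfrac14)(x_2-\tfrac34)=0,\ \tfrac12-x_1+4(x_2-\tfrac34)^2>0,\ (x_1-\tfrac1{10})(\tfrac9{10}-x_1)>0\}$: this is a bounded elementary stratum of dimension $1$ with closure contained in $V$ and with $\Sigma=\emptyset$, but one of its two arcs stops at $x_1=\tfrac12$ while the other continues, so over your single subinterval $I=(\tfrac1{10},\tfrac9{10})$ the branches are not graphs of functions defined on all of $I$ and their number jumps inside $I$; the resulting $\phi$'s therefore do not satisfy the definition of an $IRP(B')$ decomposition, which requires functions defined on the whole subintervals $(\eta_i,\eta_{i+1})$ whose graphs cover the set off finitely many vertical lines. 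The repair is exactly what the paper does: enlarge your finite cut set by the projection to the first coordinate of the frontier of $X$ (this also handles the possibility that $\pi_1(X)$ is a union of several intervals, so that $\inf$ and $\sup$ alone are not enough); since $X$ is one-dimensional this frontier is finite with cardinality bounded effectively in $B$, so effectivity is preserved and the rest of your argument then goes through.
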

\begin{proof}
As $X$ is semi-Pfaffian, the projection of $X$ down to the first coordinate is clearly a finite union of intervals and singleton sets, the number of which is bounded effectively in $B$, thus it is evidently enough to prove the statement in the case that this projection is a single interval. Since $X$ is an elementary stratum of dimension $1$, there exists a Pfaffian map $P=\langle p_1,\ldots,p_{n-1}\rangle:V\to \bbR$, whose component functions have the same chain as $X$ and complexities at most $B$, which vanish along $X$, such that at each point of $X$ we have \begin{equation}
dp_1\wedge\cdots\wedge dp_{n-1}\ne 0.\label{non-singular}
\end{equation}
Let
\[
X_0= \left\{ x\in X \; \middle| \; \det \left(\d{(P)}{(x_2,\ldots,x_n)}\right)(x)= 0\right\}.
\]
By (\ref{non-singular}) and Theorem \ref{thm:Khovanskii}, the projection of $X_0$ to the first coordinate is a finite set of points, with cardinality bounded effectively in $B$. Let $Z$ be this projection taken together with the projection of the frontier of $X$. This is a finite set, still with a bound on the cardinality that is effective in $B$. The components of \[X\setminus \bigcup_{a\in Z} \{a\} \times \bbR^{n-1}\] are the graphs of maps of the form $\langle\psi,\psi_2,\ldots,\psi_{n-1}\rangle \colon (\eta,\nu)\to \bbR^{n-1}$, where, in each case, the endpoints $\eta$ and $\nu$ lie in $Z$. 

The desired decomposition maps are the first component functions $\psi$ of each of these maps.
We just need to check that the functions $\psi$ given in this way lie in $IRP(B')$, for some positive real number $B'$ which is bounded effectively in $B$. So fix a map $\langle\psi,\psi_2,\ldots,\psi_{n-1}\rangle \colon (\eta,\nu)\to \bbR^{n-1}$ as above, for some real numbers $\eta, \nu \in Z$. We have
\begin{eqnarray*}
P(x,\psi(x),\psi_2(x),\ldots,\psi_{n-1}(x))=0, \text{ for all } x \in (\eta,\nu); \\
\det \left(\d{(P)}{(x_2,\ldots,x_n)}\right)(x,\psi(x),\psi_2(x),\ldots,\psi_{n-1}(x))\ne 0, \text{ for all } x \in (\eta,\nu).
\end{eqnarray*}
The closure of the graph of $\langle \psi,\psi_2,\ldots,\psi_{n-1}\rangle$ is contained in the closure of $X$ (and hence in $V$), as this graph is itself contained in $X$. Since $X$ is bounded, this graph is bounded. Hence, by Lemma \ref{lem:resIP}, there exists a positive real number $B'$, bounded effectively in $B$, such that the function $\psi$ lies in $IRP(B')$, as required. 
\end{proof}

The first key result of this paper concerns the effective decomposition of zero sets of certain implicitly defined functions. Here and also later we use the usual notation for cells so, for example, if $a$, $b \in \bbR \cup \{\pm\infty\}$ are such that $a < b$, and $f,g:(a,b)\to \bbR$ are continuous functions with $f(x)<g(x)$ for all $x \in (a,b)$, then we write 
\[
(f,g)_{(a,b)}= \{ \langle x,y\rangle \;|\; x \in (a,b) \text{ and } f(x)<y<g(x)\}.
\]

\begin{lemma}\label{lem:effzerosetdecomp}
Let $a$, $b \in \bbR \cup \{\pm\infty\}$ be such that $a < b$, and let $B$ be a positive real number. Suppose that $g, h:(a,b)\to \bbR$ are functions lying in $IRP(B)$ with $g<h$, and set $C$ to be the cell $(g,h)_{(a,b)}$.
Suppose further that $f: C\to \bbR$ is a function lying in $IRP(B)$ that is not identically zero.
There exists a positive real number $B'$ which is bounded effectively in $B$ such that $V(f)$ has an $IRP(B')$ decomposition.
\end{lemma}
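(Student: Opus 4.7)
The plan is to transfer the problem into bounded coordinates via $\vartheta$, apply the effective stratification of Gabrielov--Vorobjov (Theorem \ref{thm:G-Vstrat}) to a suitable semi-Pfaffian set capturing $V(f)$, recover an $IRP$ decomposition in the plane via Lemma \ref{lem:stratadecomp}, and then transform back to the original coordinates.

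First I would unpack the hypothesis $f \in IRP(B)$, using Definition \ref{def:IRP} to produce an auxiliary map $F = \langle f_1, \ldots, f_m \rangle \colon C \to \bbR^m$ with $f_1 = f$, a product of open intervals $V \sset \bbR^{2+m}$, and Pfaffian functions $p_1, \ldots, p_m \colon V \to \bbR$ of complexity at most $B$, such that $\overline{\vartheta(\text{graph}(F))} \sset V$, with the required equations and non-singularity. I would then choose a bounded open box $W$ with $\overline{\vartheta(\text{graph}(F))} \sset W$ and $\overline{W} \sset V$, and consider the bounded elementary semi-Pfaffian set
$$E = \{(u, v, w_1, \ldots, w_m) \in W : p_1 = \cdots = p_m = 0,\; w_1 = 0\}$$
of complexity effectively bounded in $B$. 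Writing $\Phi(x,y) := \vartheta(x, y, F(x,y))$ and $\tilde\pi$ for the projection to the first two coordinates, a direct check using Definition \ref{def:IRP} shows that $\Phi(V(f)) = \Phi(C) \cap E$ and that $\tilde\pi$ is a bijection from $\Phi(C)$ onto $\vartheta(C)$, so $\tilde\pi(\Phi(V(f))) = \vartheta(V(f))$.

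Next I would apply Theorem \ref{thm:G-Vstrat} to partition $E$ into elementary strata whose number and complexity are effectively bounded in $B$. Since $f$ is not identically zero, $\Phi(V(f))$ has dimension at most one, so only strata of dimension zero and one are relevant. Each one-dimensional stratum $Y$ is bounded (lying in $W$) with closure in $V$, so Lemma \ref{lem:stratadecomp} yields an $IRP(B_1)$ decomposition of $\tilde\pi(Y)$, with $B_1$ effectively bounded in $B$. Zero-dimensional strata project to finitely many additional break points. I would then merge the break points from all strata and invoke Proposition \ref{prop:impdefzerosetbounds}, applied to pairwise differences $\phi_{i,j} - \phi_{i,k}$ (which lie in $IRP$ with effectively bounded complexity by the closure properties established in Section \ref{sec:prelims}), to further refine the break points so that on each remaining sub-interval the surviving graphs are pairwise disjoint and can be totally ordered.

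The main obstacle is that $E$ may include components of $\{p_1 = \cdots = p_m = 0\}$ not contained in $\Phi(C)$, so some candidate graphs need not lie in $\vartheta(V(f))$. To handle this I would use that each candidate graph is connected and continuous while $\vartheta(V(f))$ is closed, so each such graph either lies entirely in $\vartheta(V(f))$ or meets it in a discrete set; adding this discrete set of intersection points (bounded effectively via Proposition \ref{prop:impdefzerosetbounds} applied to $f$ composed with each graph) to the break points and retaining only those graphs fully inside $\vartheta(V(f))$ produces an exact $IRP(B_1)$ decomposition of $\vartheta(V(f))$. Finally, since both $\vartheta$ and $\vartheta^{-1}$ lie in $IRP$ with complexity at most $3$ and have non-vanishing derivatives, composing each surviving graph with $\vartheta$ on input and $\vartheta^{-1}$ on output, and transforming the break points by $\vartheta^{-1}$, yields an $IRP(B')$ decomposition of $V(f)$ with $B'$ effectively bounded in $B$, as required.
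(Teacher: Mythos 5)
Your route is essentially the paper's own: pass to $\vartheta$-coordinates, adjoin to the implicit Pfaffian system the equation setting the coordinate corresponding to $f$ to zero, stratify the resulting bounded elementary semi-Pfaffian set via Theorem \ref{thm:G-Vstrat}, decompose the plane projections of the low-dimensional strata by Lemma \ref{lem:stratadecomp}, and pull back by $\vartheta^{-1}$; omitting the non-singularity inequality from $E$ is harmless, and your explicit handling of spurious branches and of the ordering of the surviving graphs fleshes out what the paper compresses into ``may then be refined''. However, one of your steps is a genuine gap. The inference ``since $f$ is not identically zero, $\Phi(V(f))$ has dimension at most one, so only strata of dimension zero and one are relevant'' is a non sequitur: the strata partition $E$, so what you must show is that no two-dimensional stratum contains a point of $\Phi(V(f))$. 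The low dimension of $\Phi(V(f))$ by itself does not exclude this, and if it occurred those points would lie in no stratum of dimension at most one and would simply be missing from your cover. This is precisely where the paper does real work: at a point of $\vartheta(\text{graph}(F))$, condition (iii) of Definition \ref{def:IRP} together with the Implicit Function Theorem exhibits $V(P)$ locally as the graph of an analytic map over the first two coordinates, so a two-dimensional subset of $E$ through such a point would force the fibre coordinate corresponding to $f$ to vanish on a nonempty open subset of $\vartheta(C)$, whence $f\equiv 0$ on the connected cell $C$ by analyticity, a contradiction. You need some version of this argument; it is the only place where the hypothesis $f\not\equiv 0$ genuinely enters beyond dimension counting.

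A second, smaller, flaw: the dichotomy ``each candidate graph is connected and continuous while $\vartheta(V(f))$ is closed, so it either lies entirely in $\vartheta(V(f))$ or meets it in a discrete set'' is false as a purely topological claim (a connected arc can meet a closed set in a nontrivial closed sub-arc), and moreover $\vartheta(V(f))$ is only relatively closed in $\vartheta(C)$. What rescues the step is analyticity rather than topology: first cut each candidate graph at its effectively boundedly many crossings with the boundary graphs of $\vartheta(C)$, i.e.\ at the zeros of the differences of the graph function with $\vartheta\circ g\circ\vartheta^{-1}$ and $\vartheta\circ h\circ\vartheta^{-1}$, controlled by Proposition \ref{prop:impdefzerosetbounds}; on each remaining subinterval lying over $\vartheta(C)$ the composition of $f$ with the graph is analytic and lies in $IRP$ with effectively bounded complexity, so it is either identically zero (graph retained) or has isolated zeros, effectively bounded in number again by Proposition \ref{prop:impdefzerosetbounds}, and those abscissae can be added to the break points. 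With these two repairs, both available from the paper's own toolkit, your argument goes through and coincides in substance with the paper's proof.
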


\begin{proof}
Since $f$ lies in $IRP(B)$ there exist, by definition, a positive integer $m$, a product of open intervals $V$ in $\bbR^{2+m}$, a Pfaffian map $P=\langle p_1,\ldots, p_m\rangle:V\to \bbR$, whose component functions have complexities at most $B$, and an auxiliary map $F=\langle f, f_2,\ldots,f_m \rangle:C\to \bbR$ such that:
\begin{itemize}
\item[(i)] the closure of $\vartheta(\text{graph}(F))$ is contained in $V$;
\item[(ii)] $P(\vartheta(x,F(x)))=0$, for all $x\in C$;
\item[(iii)] $\det \left(\d{(P)}{(x_{3},\ldots,x_{n+m})}\right) (\vartheta(x,F(x)))\ne 0$, for all $x\in C$.
\end{itemize}
For simplicity we assume that $a$, $b$ are both real numbers; if $a$ is $-\infty$ (respectively $b$ is $+\infty$), use $-1$ in place of $\vartheta(a)$ (respectively $1$ in place of $\vartheta(b)$) below.
Let
\[
V^{*}(P):= \left\{x\in V(P) \; \middle| \; \det \left(\d{(P)}{(x_{3},\ldots,x_{n+m})}\right) (x)\ne 0\right\},
\]
and let $X$ be the subset given by
\[
X:= \{ x \in V^{*}(P) \;|\; x_3=0\} \cap V',
\]
where $V'$ is an open box such that $\vartheta(\text{graph}(F)) \subseteq V'$ and $\overline{V'} \subseteq V$. 
This $X$ is a bounded elementary semi-Pfaffian set whose closure is contained in $V(P)\cap \overline{V'}$, which is a subset of $V$.
Apply Theorem \ref{thm:G-Vstrat} to $X$, and let $Y$ be a stratum of the resulting decomposition that intersects $\vartheta(\text{graph}(F))$. Suppose that $Y$ has dimension $2$, and let $W$ be any connected component of $Y$ that intersects $\vartheta(\text{graph}(F))$. As $Y$ is a stratum, by the Implicit Function Theorem $W$ is the graph of a two-variable function. By analyticity, we may conclude that $f$ is identically zero, which is a contradiction. Thus $\vartheta(V(f)) \subseteq \bigcup \{ \tilde{\pi}(Y) \; | \; Y$ is a stratum of $X$ and $\dim{(Y)}\leq 1\}$, where $\tilde{\pi} :\bbR^{2+m} \to \bbR^2$ is the projection map down to the first two coordinates. Note that the number of such strata is bounded effectively in $B$.

Applying Lemma \ref{lem:stratadecomp} to each set $\tilde{\pi}(Y)$ in this union in turn, we obtain a positive real number $B''$ which is bounded effectively in $B$, and an $IRP(B'')$ decomposition of each $\tilde{\pi}(Y)$. This may then be refined to an $IRP(B''')$ decomposition of $\vartheta(V(f))$, for a positive real number $B'''$ which is bounded effectively in $B$. Each component of this decomposition is the graph of a map $\phi=\langle \phi_1,\ldots,\phi_{1+m}\rangle:(\eta,\nu)\to \bbR^{1+m}$, with $\text{graph}(\phi) \subseteq \vartheta(\text{graph}(F))$, where $(\eta,\nu)$ is a subinterval of $(\vartheta(a),\vartheta(b))$. 

Fix such a map $\phi$ and let $\psi: (\vartheta^{-1}(\eta),\vartheta^{-1}(\nu))\to \bbR$ be defined by $\psi(x)= \vartheta^{-1}(\phi_1(\vartheta(x)))$. If $x$ lies in $(\vartheta^{-1}(\eta),\vartheta^{-1}(\nu))$, then $\vartheta(x,\psi(x))=(\vartheta(x),\phi_1(\vartheta(x)))\in \tilde{\pi}(X)$, and by our choice of $\phi$ we have $f(x,\psi(x))=0$. But $\phi_1$ lies in $IRP(B''')$, and $\vartheta$ and $\vartheta^{-1}$ lie in $IRP(3)$, so there exists a positive real number $B'$ which is bounded effectively in $B$ such that $\psi$ lies in $IRP(B')$. Clearly taking all maps $\psi$ of this form provides the required $IRP(B')$ decomposition of $V(f)$.
\end{proof}

\begin{remark}\label{rmk:effzerosetdecomp}
It is clear that a transposed version of Lemma \ref{lem:effzerosetdecomp} also holds, in the following sense. Let $a, b \in \bbR \cup \{\pm\infty\}$, let $B$ be a positive real number, and let $g, h \colon (a,b)\to \bbR$ be functions lying in $IRP(B)$ with $g<h$. Define $C$ to be the cell $(g,h)_{(a,b)}$, with transpose $C^t:= \{ \langle y,x\rangle \; | \; \langle x,y\rangle \in C \}$, and suppose that $f\colon C \to \bbR$ is a function lying in $IRP(B)$. If there exist $a^{*}, b^{*} \in \bbR\cup \{\pm\infty\}$ and functions $g^{*}, h^{*} \colon (a^{*},b^{*}) \to \bbR$ lying in $IRP(B)$ such that $C^{t} = (g^{*}, h^{*})_{(a^{*},b^{*})}$ (in particular this will be true if $g$ is a constant function and $h$ is either a constant function or is monotonic decreasing), then we may apply Lemma \ref{lem:effzerosetdecomp} to the function $f^{*}\colon C^{t}\to\bbR$ given by $f^{*}(x,y)=f(y,x)$, which clearly also lies in $IRP(B)$. By transposing back the resulting $IRP$ decomposition, we again obtain a decomposition of $V(f)$ given by $IRP$ functions, this time excluding perhaps finitely many horizontal lines and given by the graphs of functions from $y$ to $x$.

In practice, we shall use not only Lemma \ref{lem:effzerosetdecomp} and the transposed version just described, but also a transposed version for functions $f \colon \widetilde{E} \to \bbR$ lying in $IRP(B)$ which have domain $\widetilde{E}$ of the form $((a,b) \times (a^{*},b^{*})) \cap C$, where $C$ is a cell of the form $(w,f)_{(a,b)}$, for $f \colon (a,b) \to (a^{*},b^{*})$ a decreasing function in $IRP(B)$ such that $\lim_{x\to b^{-}}f(x) = w$. Such a version can clearly be obtained by first decomposing using effective monotonicity (Proposition \ref{prop:effmon}) and then applying the transposed version of Lemma \ref{lem:effzerosetdecomp} outlined above.
\end{remark}

Next, we will require a result that ensures that the limits of implicitly defined functions are piecewise implicitly definable in an effective way.

\begin{lemma} \label{lem:limit}
Let $a$, $b$ be real numbers such that $0\leq a < b \leq 1$, and let $B$ be a positive real number. Suppose that $g:(a,b) \to (0,1)$ lies in the class $IRP(B)$ and set $C=(0,g)_{(a,b)}$. Suppose that $f:C\to (0,1)$ also lies in the class $IRP(B)$. Define a function $\phi:(a,b)\to [0,1]$ by
\[
\phi(x)= \lim_{y\to g(x)^-} f(x,y).
\]
There exist a positive real number $B'$ bounded effectively in $B$, a non-negative integer $N$ bounded by $B'$, and real numbers $a_1,\ldots,a_{N}$ with $a=:a_0<\ldots<a_{N+1}:=b$ such that the restriction of $\phi$ to each interval $(a_i,a_{i+1})$ lies in $IRP(B')$.
\end{lemma}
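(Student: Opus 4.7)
The strategy is to realise the graph of $\phi$, after the $\vartheta$-change of coordinates, as contained in the projection of a low-dimensional elementary semi-Pfaffian set built from the defining Pfaffian equations for $f$ and $g$ together with the constraint $y=g(x)$; an effective $IRP$ decomposition of that projection via Lemma \ref{lem:stratadecomp} then yields the piecewise $IRP$ structure of $\phi$. The key input is the ``equations at the boundary'' feature of $IRP$: the requirement $\overline{\vartheta(\mathrm{graph}(F))}\subseteq V$, combined with continuity of the defining Pfaffian functions, forces the equations $p_i=0$ to persist at boundary limit points.

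First I would verify that $\phi$ is well-defined. For each fixed $x_0\in(a,b)$, the single-variable function $y\mapsto f(x_0,y)$ lies in $IRP(B)$ and is bounded in $(0,1)$, so Proposition \ref{prop:effmon} bounds its number of monotonic pieces effectively in $B$; hence it is monotonic near $y=g(x_0)^-$ and the limit $\phi(x_0)\in[0,1]$ exists. The same argument applied to each $\vartheta\circ f_i$ (bounded in $(-1,1)$) shows that $W^{*}(x_0):=\lim_{y\to g(x_0)^-}\vartheta(F(x_0,y))\in[-1,1]^m$ exists componentwise, with first coordinate $\vartheta(\phi(x_0))$. Since $\overline{\vartheta(\mathrm{graph}(F))}\subseteq V$ and the $p_i$ are continuous on $V$, we obtain $p_i(\vartheta(x_0),\vartheta(g(x_0)),W^{*}(x_0))=0$ for each $i$.

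Now let $p_1,\dots,p_m\colon V\to\bbR$ with $V\subseteq\bbR^{2+m}$ witness $f\in IRP(B)$, and let $q_1,\dots,q_k\colon V_G\to\bbR$ with auxiliary map $H=(g,h_2,\dots,h_k)$ witness $g\in IRP(B_1)$ for some $B_1$ effective in $B$. Working in an appropriate bounded open box and combining the two Pfaffian chains into a common one in the usual way, I would form the elementary semi-Pfaffian set
\[
S=\{(u,v,w_1,\dots,w_m,\xi_2,\dots,\xi_k)\;:\;p_i(u,v,w)=0,\;q_j(u,v,\xi_2,\dots,\xi_k)=0\text{ for all }i,j\}
\]
(with $v$ identified with the first auxiliary coordinate of $g$), of complexity effectively bounded in $B$, whose projection $\tilde\pi(S)$ to the $(u,w_1)$-plane contains the graph of $u\mapsto\vartheta(\phi(\vartheta^{-1}(u)))$ by the previous paragraph. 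Applying Theorem \ref{thm:G-Vstrat} to $S$ and then Lemma \ref{lem:stratadecomp} to the $\tilde\pi$-images of its strata of dimension at most $1$ yields an $IRP(B_2)$ decomposition for some $B_2$ effective in $B$. The graph of $\vartheta\circ\phi\circ\vartheta^{-1}$ coincides on each subinterval with one of the branches of this decomposition; the finitely many points where $\phi$ switches branches lie in the zero sets of differences of $IRP(B_2)$ functions and are thus finite with bound effective in $B$ by Proposition \ref{prop:impdefzerosetbounds}. Composition with $\vartheta,\vartheta^{-1}\in IRP(3)$ then gives the desired $\phi\rst{(a_i,a_{i+1})}\in IRP(B')$, with $a_i:=\vartheta^{-1}(u_i)$.

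The main obstacle is the potential presence of strata of $S$ of dimension greater than one whose $\tilde\pi$-projections cover open subsets of the $(u,w_1)$-plane, which would obstruct the direct application of Lemma \ref{lem:stratadecomp}. However, the expected dimension of $S$ is $(1+m+k)-(m+k)=1$, and, by an analyticity argument analogous to the one in the proof of Lemma \ref{lem:effzerosetdecomp} (where a $2$-dimensional stratum on the implicitly defined graph would, via the Implicit Function Theorem, force $f$ to vanish identically), any anomalous higher-dimensional stratum is ruled out using the non-singularity conditions in the $IRP$ definitions of $f$ and $g$. The remaining work is careful bookkeeping of the effective constants through Theorem \ref{thm:G-Vstrat}, Lemma \ref{lem:stratadecomp}, and Proposition \ref{prop:impdefzerosetbounds}, together with the closure-under-composition estimates for $IRP$.
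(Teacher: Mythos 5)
Your reduction to a stratification of the combined variety $S$ breaks down at exactly the point you flag as the ``main obstacle'', and the fix you propose does not work. The non-singularity conditions in the $IRP$ definitions of $f$ hold only at points of $\vartheta(\mathrm{graph}(F))$, i.e.\ over the \emph{open} cell $C$; the graph of the limit map lies in the closure, where $\det\left(\d{(P)}{(x_3,\ldots,x_{2+m})}\right)$ may very well vanish. The paper's proof makes this explicit by splitting $(\vartheta(a),\vartheta(b))$ into the set $Z'$ where non-degeneracy persists at the limit point and the set $Z$ where it fails, and it notes that \emph{a priori each of these can contain a whole subinterval}. On such a degenerate subinterval, $V(P)$ (hence your $S$) can locally have dimension $\geq 2$ through the limit curve, so the curve may sit inside a $2$-dimensional stratum of $S$ and be missed entirely when you keep only strata of dimension at most $1$. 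Your ``expected dimension'' count $(1+m+k)-(m+k)=1$ only bounds the dimension generically, not at these singular boundary points, and the analyticity argument from Lemma \ref{lem:effzerosetdecomp} has no analogue here: there the contradiction was with the hypothesis that $f$ is not identically zero, whereas here a $2$-dimensional stratum through the boundary curve contradicts nothing. So the step ``any anomalous higher-dimensional stratum is ruled out'' is a genuine gap, not bookkeeping.

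The paper's argument supplies precisely the missing tool. On $Z$ it observes that the graph of the limit of the auxiliary map lies in the \emph{frontier} of the non-singular locus $V^{*}(P)$, which has dimension at most $1$ because $V^{*}(P)$ is $2$-dimensional, and which is an elementary semi-Pfaffian set of effectively bounded complexity by Gabrielov's frontier theorem \cite{Gab-98}; only then do Theorem \ref{thm:G-Vstrat} and Lemma \ref{lem:stratadecomp} apply. On $Z'$ it does not stratify at all: it writes down a new combined implicit system (the $g$-system together with $p_j$ evaluated at the $g$-coordinate) whose block-triangular Jacobian is non-singular, exhibiting $\phi\rst{\vartheta^{-1}(Z')}$ directly as an $IRP$ function. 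Your first paragraph (existence of the limits via Proposition \ref{prop:effmon} on vertical slices, and persistence of the equations $p_i=0$ at boundary limit points) is fine and matches the paper's use of ``equations at the boundary'', and your non-degenerate case is essentially the paper's $Z'$ in disguise; but without Gabrielov's frontier theorem, or some substitute controlling the dimension and complexity of the degenerate boundary locus, the proposal does not establish the lemma.
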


\begin{proof}
Since $f$ lies in $IRP(B)$, there exist a positive integer $m$, a product of open intervals $V$ in $\bbR^{2+m}$, a Pfaffian map $P=\langle p_1,\ldots, p_m\rangle:V\to \bbR$, whose component functions have a common chain and complexities bounded by $B$, an auxiliary map $F=\langle f_1,\ldots,f_m\rangle:C\to \bbR$ such that $f_1 = f$ and
\begin{itemize}
\item[(i)] the closure of $\vartheta(\text{graph}(F))$ is contained in $V$;
\item[(ii)] $P(\vartheta(x,y,F(x,y)))=0$, for all $x \in C$;
\item[(iii)] $\det \left(\d{(P)}{(x_{3},\ldots,x_{2+m})}\right) (\vartheta(x,y,F(x,y)))\ne 0$, for all $x \in C$.
\end{itemize}
For $i=1,\ldots,m$, define $\psi_i:(\vartheta(a),\vartheta(b))\to \bbR$ by
\[
\psi_i(x)=\lim_{y\to \vartheta(g(\vartheta^{-1}(x)))^{-}}\vartheta \left(f_i\left(\vartheta^{-1}(x), \vartheta^{-1}(y)\right)\right).
\]
Note that these limits exist and so these functions are well defined.
Put $\tilde{\psi}:= \langle\vartheta \circ g \circ \vartheta^{-1},\psi_1,\ldots,\psi_m\rangle \colon (\vartheta(a),\vartheta(b))\to \bbR^{1+m}$. We first observe that
\[
\text{graph}(\tilde{\psi}) \subseteq \overline{\vartheta(\text{graph}(F))}.
\]
It follows that the graph of $\tilde{\psi}$ lies in $V$, and moreover lies in $V(P)$.

We consider the elementary semi-Pfaffian set 
\[
V^{*}(P):=\left\{ \langle x_1,\ldots,x_{2+m} \rangle \in V(P) \;\middle|\; \det\left( \d{P}{(x_{3},\ldots,x_{2+m})}\right)(x_1,\ldots,x_{2+m})\ne 0\right\}.
\]
Define $Z:= \{x\in(\vartheta(a),\vartheta(b))\;|\; \langle x,\tilde{\psi}(x)\rangle\notin V^{*}(P)\}$ and $Z':=(\vartheta(a),\vartheta(b))\less Z$.
A priori, each of $Z$ and $Z'$ could contain a subinterval with non-empty interior.

As $\vartheta(\text{graph}(F)) \subseteq V^{*}(P)$, it follows that graph$(\tilde{\psi}\rst{Z})$ lies in the frontier of $V^{*}(P)$ taken in $V$ (i.e. lies in $(\overline{V^{*}(P)} \cap V) \setminus V^{*}(P)$; see \cite{Gab-98} for this terminology). Since $V^{*}(P)$ has dimension $2$, its frontier in $V$ has dimension at most $1$. By a theorem of Gabrielov (\cite[Theorem 1.1]{Gab-98}), it also an elementary semi-Pfaffian set, with effectively bounded complexity. 
Apply Theorem  \ref{thm:G-Vstrat} to this set and then apply Lemma \ref{lem:stratadecomp} to each of the strata $X$ of the resulting decomposition. The result is a positive real number $B'$, which is bounded effectively in $B$, and a partition of $Z$ into at most $B'$ subintervals $I$ such that, for each such $I$ with non-empty interior, $\psi_{1}\rst{I}$ lies in $IRP(B')$. 
For each such subinterval $I$, define the function $\chi\colon \vartheta^{-1}(I)\to \bbR$ by \[\chi(x) = \vartheta^{-1}(\psi_{1}(\vartheta(x))),\] for all $x \in \vartheta^{-1}(I)$. Clearly, since $\vartheta$ and $\vartheta^{-1}$ lie in $IRP(3)$, the function $\chi$ lies in $IRP(B'')$ for some $B''$ bounded effectively in $B$. Now note that
\begin{eqnarray*}
\chi(x) &=& \vartheta^{-1}\left(\lim_{y\to \vartheta(g(\vartheta^{-1}(\vartheta(x))))^{-}}\vartheta \left(f\left(\vartheta^{-1}(\vartheta(x)), \vartheta^{-1}(y)\right)\right)\right)\\
&=& \lim_{y\to \vartheta(g(x))^{-}} f(x,\vartheta^{-1}(y))  \\
&=&\lim_{y\to g(x)^{-}} f(x,y),
\end{eqnarray*}
for all $x \in \vartheta^{-1}(I)$. Therefore $\chi = \phi\rst{\vartheta^{-1}(I)}$, and hence $\phi\rst{\vartheta^{-1}(I)}$ lies in $IRP(B'')$.

We now consider the set $Z'$. Since the function $g$ lies in $IRP(B)$, there exist $m'\ge 1$ and a product of open intervals $V'$ in $\bbR^{1+m'}$, Pfaffian functions $q_1,\ldots, q_{m'}:V'\to \bbR$, which have a common chain and complexities bounded by $B$, and an auxiliary map $G=\langle g_1,\ldots,g_{m'}\rangle:(a,b)\to \bbR$ such that $g_1 = g$ and
\begin{itemize}
\item[(i)] the closure of $\vartheta(\text{graph}(G))$ is contained in $V'$;
\item[(ii)] $q_i(\vartheta(x,G(x)))=0$, for $i=1,\ldots,m'$ and for all $x \in (a,b)$;
\item[(iii)] $\det \left(\d{(q_1,\ldots,q_{m'})}{(x_{2},\ldots,x_{1+m'})}\right) (\vartheta(x,G(x)))\ne 0$, for all $x \in (a,b)$.
\end{itemize}
We let $W=(\vartheta(a),\vartheta(b))\times \pi_{m'}(V')\times \pi_{m}(V)$, where $\pi_{m'}$ and $\pi_m$ denote projections onto coordinates $\langle x_2,\ldots,x_{1+m'}\rangle$ and $\langle x_{1+m'+1},\ldots,x_{1+m'+m}\rangle$, respectively. Note that $W$ is a product of open intervals. For each $i = 1,\ldots, m'+m$ we define the function $r_i:W \to \bbR$ by
\[
r_i(t,w_1,\ldots,w_{m'},z_1,\ldots,z_m)= q_i(t,w_1,\ldots,w_{m'}),
\]
for $i=1,\ldots, m'$, and 
\[
r_{m'+j}(t,w_1,\ldots,w_{m'},z_1,\ldots,z_m)=p_j(t,w_1,z_1,\ldots,z_m),
\]
for $j=1,\ldots, m$. 
For each $i = 1,\ldots, m'+m$ we define the function $\phi_i : \vartheta^{-1}(Z') \to \bbR$ by
\[
\phi_i(x)=g_i(x),
\]
for $i=1,\ldots,m'$ and $x\in\vartheta^{-1}(Z')$, and
\[
\phi_{m'+j}(x)= \vartheta^{-1}(\psi_j(\vartheta(x)))
\]
for $j=1,\ldots,m$ and $x\in\vartheta^{-1}(Z')$.
By reasoning as above, we have that $\phi_{m'+1} = \phi\rst{\vartheta^{-1}(Z')}$. Define $\Phi:=\langle\phi_1,\ldots,\phi_{m'+m}\rangle \colon \vartheta^{-1}(Z') \to \bbR^{m'+m}$. Clearly the closure of $\vartheta(\text{graph}(\Phi))$ is contained in $W$, and $r_i(\vartheta(x),\vartheta(\Phi(x)))=0$, for all $i=1,\ldots,m'+m$ and for all $x \in \vartheta^{-1}(Z')$. It only remains to check the non-singularity condition. The Jacobian matrix has the form
$$
\begin{pmatrix}
\d{q_1}{w_1} & \d{q_1}{w_2}& \cdots & \d{q_1}{w_{m'}} & 0 &\cdots& 0 \\
\vdots &\vdots  & \vdots& \vdots & \vdots & &\vdots \\
 \d{q_{m'}}{w_1} & \d{q_{m'}}{w_2}& \cdots & \d{q_{m'}}{w_{m'}} & 0 &\cdots& 0 \\
\d{p_1}{w_1} & 0 &\cdots & 0& \d{p_1}{z_1} &\cdots &\d{p_1}{z_m} \\
\vdots &\vdots  & \vdots& \vdots & \vdots & &\vdots\\
\d{p_m}{w_1} & 0 &\cdots & 0& \d{p_m}{z_1} &\cdots & \d{p_m}{z_m} \\
\end{pmatrix}.
$$
At a point $(\vartheta(x),\vartheta(\Phi(x)))$ the upper left block has non-vanishing determinant, by the non-singularity condition satisfied by $g$. The lower right block has non-vanishing determinant, by our assumption that $\langle x,\tilde{\psi}(x)\rangle \in V^{*}(P)$ for all $x \in Z'$. So the whole matrix is non-singular, as we needed. Hence $\phi\rst{\vartheta^{-1}(Z')}$ lies in $IRP(B)$.

Combining these observations, it is straightforward to obtain the required positive real number $B'$, bounded effectively in $B$, and partition of $(a,b)$ as in the statement of the lemma.
\end{proof}

\begin{remark}
Clearly Lemma \ref{lem:limit} also applies in certain other situations. We shall apply it in the case that the domain is as above but the limit is taken as $y$ tends to $0^{+}$. We shall also apply it to certain limits in the following situation. The function $g$ is defined on a subset $(a,b)$ of $(0,1)$ and is decreasing, and the cell $C$ is taken to be $(w,g)_{(a,b)}$, where $w = \lim_{x \to b^{-}}g(x)$. The limit functions that we consider in this setting are functions of $y$, obtained by taking the limit of $f(x,y)$ either as $x$ tends to $a^{+}$ or as $x$ tends to $g^{-1}(y)^{-}$.
\end{remark}

Finally, in this section, we will need a result which allows us to `detect maximums' of implicitly defined functions.

\begin{defn}
Suppose that $a$, $b$, $a'$, $b'$ are real numbers such that $0\leq a \leq a' < b' \leq b \leq 1$,
that $g:(0,1)\times(a,b)\to \bbR$ is a continuous function and that $f:(a',b')\to \bbR$ is any function.
We say that a function $\psi:(0,f)_{(a',b')}\to (0,1)$ \emph{detects maximums of $g$} if, for each $\langle y,t\rangle \in (0,f)_{(a',b')}$, the restriction of $g(\cdot, y)$ to $[t,1-t]$ takes a maximum at $\psi(y,t)$.
\end{defn}

\begin{lemma}\label{lem:maxfn}
Let $a$, $b$ be real numbers such that $0\leq a<b\leq1$, let $B$ be a positive real number and suppose that $g:(0,1)\times(a,b)\to \bbR$ is a function lying in the class $IRP(B)$.  There exist a positive real number $B'$ bounded effectively in $B$, a non-negative integer $N$ bounded by $B'$, real numbers $a_0,\ldots,a_{N+1}$ with $a=a_0<a_1<\ldots<a_N<a_{N+1}=b$, and functions $f_i:(a_i,a_{i+1})\to (0,1)$ lying in the class $IRP(B')$, for $i=0,\ldots, N$, such that, on each cell $(0,f_i)_{(a_i,a_{i+1})}$ (with $0$ here being the constant function taking that value), there is a function in the class $IRP(B')$ which detects maximums of $g$.
\end{lemma}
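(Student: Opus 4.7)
The plan is to describe the argmax of $g(\cdot, y)$ on $[t, 1-t]$ as one of a finite family of candidate $IRP$ functions, and then to choose $f_i$ small enough that both the candidate set and the identity of the winning candidate are fixed on each resulting cell. Since $IRP$ is closed under partial differentiation with effectively bounded complexity, $\partial g/\partial x$ lies in $IRP(B_1)$ for some $B_1$ effective in $B$. I will apply the transposed form of Lemma \ref{lem:effzerosetdecomp} (via Remark \ref{rmk:effzerosetdecomp}) to $\partial g/\partial x$ on the box cell $(0,1) \times (a,b)$ to obtain a partition $a = \eta_0 < \cdots < \eta_{N+1} = b$ and, above each subinterval $(\eta_i, \eta_{i+1})$, a list of $IRP$ branches $\xi_{i,1}(y) < \cdots < \xi_{i, M_i}(y)$ whose graphs cover the critical set $V(\partial g/\partial x)$ modulo the horizontal lines $\{y = \eta_i\}$. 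For $y \in (\eta_i, \eta_{i+1})$ and $t$ satisfying $t < \xi_{i,1}(y)$ and $t < 1 - \xi_{i, M_i}(y)$, the argmax of $g(\cdot, y)$ on $[t, 1-t]$ must lie in the set $\{t,\, 1-t,\, \xi_{i,1}(y),\, \ldots,\, \xi_{i, M_i}(y)\}$.

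To determine which candidate wins, I fix $(\eta_i, \eta_{i+1})$ and form the $IRP$ functions
\[
B_j(y,t) := g(t,y) - g(\xi_{i,j}(y), y), \quad C_j(y,t) := g(1-t,y) - g(\xi_{i,j}(y), y), \quad D(y,t) := g(t,y) - g(1-t,y)
\]
on the cell $(0, 1/2)_{(\eta_i, \eta_{i+1})}$, together with the univariate $IRP$ functions $E_{j,k}(y) := g(\xi_{i,j}(y), y) - g(\xi_{i,k}(y), y)$ and $F_{j,k}(y) := \xi_{i,j}(y) + \xi_{i,k}(y) - 1$. Applying Lemma \ref{lem:effzerosetdecomp} to each of $B_j, C_j, D$ (discarding any that vanish identically) yields a finite collection of $IRP$ graphs $t = \tau_l(y)$ whose union, together with finitely many vertical lines, covers $V(B_j)\cup V(C_j)\cup V(D)$ inside the cell. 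Proposition \ref{prop:impdefzerosetbounds} applied to each $E_{j,k}$, $F_{j,k}$ and each difference $\tau_l - \tau_{l'}$ then refines the $y$-partition to one on which the signs of all $E_{j,k}$ and $F_{j,k}$ and the orderings of the $\tau_l(y)$ are constant, with all quantities and complexity bounds remaining effective in $B$.

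On each subinterval $(a_p, a_{p+1})$ of this final refinement I define
\[
f_p(y) := \min\bigl\{ \tau_l(y),\, \xi_{i,1}(y),\, 1 - \xi_{i, M_i}(y),\, \tfrac{1}{2}\bigr\},
\]
where the $\xi$-terms drop out when $M_i = 0$ and only those $\tau_l$ defined on $(a_p, a_{p+1})$ appear. By the preceding refinement, the minimum is realised throughout $(a_p, a_{p+1})$ by a single fixed element of the list, so $f_p$ lies in $IRP(B')$ for some $B'$ effective in $B$. On the resulting cell $(0, f_p)_{(a_p, a_{p+1})}$, the inequalities $f_p \leq \xi_{i,1}$ and $f_p \leq 1 - \xi_{i, M_i}$ ensure that every $\xi_{i,j}(y)$ lies in $[t, 1-t]$, while $f_p \leq \tau_l$ prevents any of $B_j, C_j, D$ from vanishing on this connected cell, forcing each to have constant sign. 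Combined with the fixed signs of the $E_{j,k}$ (which order the critical values), this uniformly identifies the argmax across the cell as one of $t$, $1-t$, or a fixed $\xi_{i, j^*}(y)$; the corresponding map $(y,t) \mapsto t$, $(y,t) \mapsto 1-t$, or $(y,t) \mapsto \xi_{i, j^*}(y)$ lies in $IRP(B')$ and detects maximums of $g$.

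The main obstacle is that the conclusion demands a \emph{single} $IRP$ detection function on each cell, rather than a piecewise one. This is what forces me to locate every potential $t$-crossing of the ordering of candidate $g$-values (handled by Lemma \ref{lem:effzerosetdecomp} in the $(y,t)$-plane) and then to refine the $y$-partition enough that both the cutoff $f_p$ and the dominant candidate are realised by fixed functions on each piece. Once these refinements are in place, constructing the detector reduces to routine case analysis on the fixed signs and orderings.
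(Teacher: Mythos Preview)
Your approach is correct and follows the same outline as the paper's proof: decompose $V(\partial g/\partial x)$ transposally into branches $\xi_{i,j}$, then use Lemma~\ref{lem:effzerosetdecomp} on the pairwise value-comparison functions to produce a cutoff below which a single candidate from $\{t,\,1-t,\,\xi_{i,j}(y)\}$ always wins. The paper streamlines this in two ways. First, it packages all comparisons into a single product $h$ (your $B_j,C_j,D,E_{j,k}$ are exactly its non-trivial factors), decomposes $V(h)$ once, and takes $f_i$ to be the lowest branch of that decomposition. Second, rather than tracking signs explicitly, it identifies the winning candidate by a connectedness argument: the sets ``candidate $c$ realises the max on $[t,1-t]$'' cover the cell, and any two whose closures meet must coincide because the corresponding factor of $h$ cannot vanish there. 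In particular the paper never forces the critical points into $[t,1-t]$, so the terms $\xi_{i,1}$ and $1-\xi_{i,M_i}$ in your cutoff (and the auxiliary $F_{j,k}$) are an unnecessary, though harmless, strengthening. One small gap to patch: to conclude that your minimum $f_p$ is realised throughout $(a_p,a_{p+1})$ by a single listed function, you also need the signs of $\tau_l - \xi_{i,1}$, $\tau_l - (1-\xi_{i,M_i})$, $\tau_l - \tfrac12$ and $\xi_{i,1}-\tfrac12$ to be constant on each subinterval, which requires a few more applications of Proposition~\ref{prop:impdefzerosetbounds} than you listed; this is routine.
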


\begin{proof}
\sloppy By a transposed version of Lemma \ref{lem:effzerosetdecomp} (see Remark \ref{rmk:effzerosetdecomp}), there exists a positive real number $B_1$, which is bounded effectively in $B$, such that there is a transposed $IRP(B_1)$ decomposition of $\frac{\partial g}{\partial x}$, i.e. there exist a non-negative integer $N$ and positive integers $M_0,\ldots,M_N$ which are all bounded by $B_1$, real numbers $a_0,\ldots,a_{N+1}$ with $a=a_0<a_1<\ldots<a_N<a_{N+1}=b$, and functions $\phi_{i,j}:(a_i,a_{i+1})\to (0,1)$ lying in $IRP(B_1)$, for $i=0,\ldots, N$ and $j=1,\ldots,M_i$, such that $\phi_{i,1}<\ldots<\phi_{i,M_i}$ and
\[
V\left(\frac{\partial g}{\partial x}\right)\setminus \bigcup_{i=1}^{N} \left((0,1)\times \{ a_i\}\right) = \bigcup \text{graph}^t (\phi_{i,j}),\]
where $\text{graph}^t(\phi_{i,j})= \{ \langle x,y\rangle \in (0,1)\times(a_{i},a_{i+1}) \; | \; x = \phi_{i,j}(y)\}$.

We now consider the strips $(a_i,a_{i+1})\times (0,1)$ in $\langle y,t \rangle$-space and work on each separately. So fix $i$, and from now on write $(a',b')=(a_i,a_{i+1})$ and drop the index $i$ elsewhere to make the notation clearer (so $M=M_{i}$ for example). Let
\begin{multline*}
h(y,t)= \prod_{j=1}^{M} \left( g(\phi_j(y),y)- g(t,y) \right)\prod_{j=1}^{M} \left( g(\phi_j(y),y)- g(1-t,y)\right) \cdot \\ \prod_{\substack{j_1,j_2=1 \\ j_1\ne j_2}}^{M} \left( g(\phi_{j_1}(y),y) -g(\phi_{j_2}(y),y)\right).
\end{multline*}
There is a positive real number $B_2$ bounded effectively in $B$ such that the function $h$ lies in the class $IRP(B_2)$ on the strip $(a',b')\times (0,1)$. If its zero set has interior then at least one factor is identically zero. Drop any such factors to obtain a new $h$ that is not identically zero and whose zero set has empty interior.

We now apply Lemma \ref{lem:effzerosetdecomp} to $h$, to obtain an effective $IRP$ decomposition of $V(h)$. This provides a positive real number $B_3$ which is bounded effectively in $B$, a non-negative integer $L$ and positive integers $K_0,\ldots,K_L$ all bounded by $B_3$, real numbers $\eta_0, \ldots, \eta_{L+1}$ with $a'=\eta_0<\eta_1<\ldots<\eta_{L}<\eta_{L+1}=b'$, and functions $f_{l,k}:(\eta_l,\eta_{l+1})\to (0,1)$ lying in the class $IRP(B_3)$, for $l=0,\ldots, L$ and $k= 1,\ldots,K_l$, such that $f_{l,1}<\ldots<f_{l,K_l}$ and
\[
V(h)\setminus \bigcup_{l=1}^{L} \left(\{\eta_l\}\times (0,1)\right) =\bigcup_{l,k} \text{graph}(f_{l,k}).
\]
The cells $(0,f_{l,1})_{(\eta_l,\eta_{l+1})}$ are those that we want. We now show that on each of them there is a function implicitly defined from restricted Pfaffian functions which detects maximums of $g$. We will show that on each of these cells at least one of the following functions
\begin{itemize}
\item $\langle y,t\rangle\mapsto \phi_j(y)$ for some $j=1,\ldots, M$
\item $\langle y,t\rangle\mapsto t$
\item $\langle y,t\rangle\mapsto 1-t$
\end{itemize}
detects maximums of $g$. As these all lie in some class $IRP(B_4)$, with $B_4$ a positive real number bounded effectively in $B$, this will be enough to finish the proof.

To see this, fix $C=(0,f_{l,1})_{(\eta_l,\eta_{l+1})}$ and note that, at each point $\langle y_0,t_0\rangle$ in $C$, the restriction of $g_{y_0}:=g(\cdot,
y_0)$ to $[t_0,1-t_0]$ takes a maximum at at least one of the points $\phi_1(y_0),\ldots,\phi_M(y_0),t_0,1-t_0$. So the sets
\begin{eqnarray*}
X_j =\{ \langle y,t\rangle\in C \;|\; \phi_j(y) \text{ is a point at which } g_y\rst{[t_0,1-t_0]} \text{ takes a maximum}\} \\
Y_1 =\{ \langle y,t\rangle\in C \;|\; t \text{ is a point at which } g_y\rst{[t_0,1-t_0]} \text{ takes a maximum}\} \\
Y_2 =\{ \langle y,t\rangle\in C \;|\; 1- t \text{ is a point at which } g_y\rst{[t_0,1-t_0]} \text{ takes a maximum}\}
\end{eqnarray*}
cover $C$. Suppose that two of these sets are non-empty. Then there are two of them whose closures in $C$ have non-empty intersection. Suppose $\text{cl}(X_{j_1})\cap \text{cl}(X_{j_2})\cap C$ is non-empty, with $(y_0,t_0)$ a point in the intersection. Then we have $g(\phi_{j_1}(y_0),y_0)=g(\phi_{j_2}(y_0),y_0)$. Since the $f_{l,k}$ are a decomposition of the zero set of $h$, the function $g(\phi_{j_1}(y),y)-g(\phi_{j_2}(y),y)$ must be one of the factors we omitted from $h$ for being identically zero. So $X_{j_1}=X_{j_2}$. Similarly, if $\text{cl}(X_{j})\cap \text{cl}(Y_r)\cap C$ is non-empty then $X_j=Y_r$. So one of the sets above is $C$, and the corresponding function detects maximums.
\end{proof}

\section{Effective uniform parameterization for curves}
\label{sec:curves}
In this section we begin approaching our effective parameterization result for surfaces implicitly defined from restricted Pfaffian functions, Theorem \ref{thm:effpara_2}, whose proof will be concluded in the next section. The primary result of this section is an effective uniform parameterization for certain families of one-variable functions. 

We begin by stating the formal definitions of $r$-parameterization and its analogue for functions, $r$-reparameterization. These definitions have their origins in work of Yomdin \cite{Yom-87-vge}, \cite{Yom-87-Ckr}, and Gromov \cite{Gro-87} and were given in this form by Pila and Wilkie in proving their o-minimal Reparameterization Theorem \cite{PilWil-06}.

For $m$, $l$ non-negative integers, a set $X \sset \bbR^{l}$ and a map $f = \langle f_{1},\ldots,f_{m}\rangle \colon X \to \bbR^{m}$, we use $\norm{f}$ to denote $\sup_{x\in X}\{\md{f_{1}(x)},\ldots,\md{f_{m}(x)}\}$ (where, by convention, this supremum takes value zero if $m$ is zero). Given moreover an $l$-tuple of natural numbers $\a = \langle \a_1,\ldots,\a_l\rangle$, we denote the derivative of $f$ of order $\a$ (should it exist) by \[f^{(\a)} = \left\langle \frac{\partial^{\md{\a}}f_{1}}{\partial x_1^{\a_1}\cdots\partial x_{l}^{\a_{l}}},\ldots,\frac{\partial^{\md{\a}}f_{m}}{\partial x_1^{\a_1}\cdots\partial x_{l}^{\a_{l}}}\right\rangle.\]
\begin{defn}\label{def:para}
Let $r$, $m$, $l$ be non-negative integers and let $X \sset \bbR^{m}$ be a set of dimension $l$. An \emph{$r$-parameterization} of $X$ is a finite collection of $C^{r}$ maps $\phi_{0},\ldots,\phi_{M} \colon (0,1)^{l} \to \bbR^{m}$ such that
\begin{enumerate}[(i)]
\item $X = \bigcup_{j=0}^{M}$Im$(\phi_{j})$; \label{para:cover}
\item $\norm{\phi_{j}^{(\a)}} \leq 1$, for all $j=0,\ldots,M$ and all $\a \in \bbN^{l}$ with $\md{\a}\leq r$. \label{para:bound}
\end{enumerate}
\end{defn}

\begin{defn}\label{def:repara}
Let $r$, $m$, $n$, $l$ be non-negative integers and let $f \colon X \to \bbR^{n}$ be a map whose domain $X \sset \bbR^{m}$ is a set of dimension $l$. An \emph{$r$-reparameterization} of $f$ is an $r$-parameterization $\phi_{0},\ldots,\phi_{M}$ of $X$ such that, in addition,
\begin{enumerate}[(i)]
\addtocounter{enumi}{2}
\item $f \circ \phi_{j}$ is $C^{r}$ for each $j = 0,\ldots, M$; \label{repara:Cr}
\item $\norm{(f \circ \phi_{j})^{(\a)}} \leq 1$, for all $j=0,\ldots,M$ and all $\a \in \bbN^{l}$ with $\md{\a}\leq r$. \label{repara:bound}
\end{enumerate}
\end{defn}

The proof given in the next section of Theorem \ref{thm:effpara_2}, an effective parameterization for surfaces implicitly defined from restricted Pfaffian functions, will follow the approach of \cite{PilWil-06} that a reparameterization of a certain type of two-variable map will be constructed from the reparameterizations of a suitable family of one-variable maps. In order to do this, we need to be able to reparameterize such a family in a uniform way, in the following sense. Fix non-negative integers $r$ and $n$. Consider a  family of one-variable maps $\mc{F} := \{F_{y}\colon (0,1) \to (0,1)^{n} \; | \; y \in (0,1)\}$ as a two-variable map $F\colon(0,1)^{2} \to (0,1)^{n}$ given by $F(\cdot,y) = F_{y}$. We would have a uniform way of $r$-reparameterizing the family $\mc{F}$ if there were a family of functions $\mc{S} = \{\phi_{j}\colon (0,1)^{2} \to (0,1) \; | \; j=1,\ldots,M\}$ such that the set $\mc{S}_{y}:=\{\phi_{j}(\cdot,y)\; | \; j=0,\ldots,M\}$, for each $y\in(0,1)$, were an $r$-reparameterization of the map $F(\cdot,y)$.

Given the construction that we will follow in the proof of Theorem \ref{thm:effpara_2}, we will need to be able to apply such a uniformity result to two-variable maps $F$ which are implicitly defined from restricted Pfaffian functions, and which are, in addition, defined on a wider class of cells within $(0,1)^{2}$ (described by functions which will also themselves be implicitly defined from restricted Pfaffian functions). It will also be crucial to maintain control over the complexity of the family $\mc{S}$, which will be given in terms of that of $F$ and any functions involved in defining the domain of $F$. This will, by necessity, in fact require a more precise statement concerning the construction and uniformity of the maps in $\mc{S}$ than that suggested in the previous paragraph, namely the following.

\begin{proposition} \label{prop:curvescells}
Let $n$, $r$ be non-negative integers, let $a$, $b$ be real numbers such that $0\leq a < b \leq 1$, let $B$ be a positive real number and suppose that $f \colon (a,b) \to (0,1)$ is a decreasing function lying in the class $IRP(B)$. Set $z = \lim_{x\to a^{+}}f(x)$ and $w = \lim_{x\to b^{-}}f(x)$, and let $C$ be the cell $(w,f)_{(a,b)}$ (with $w$ here denoting the constant function taking that value). Suppose that $F \colon C \to (0,1)^{n}$ is also a map lying in the class $IRP(B)$.

There exist a positive real number $B'$ which is bounded effectively in $B$, $r$ and $n$, non-negative integers $N$, $M_{0}, \ldots, M_{N}$ all bounded by $B'$, real numbers $\xi_{0}, \ldots, \xi_{N+1}$ with $w = \xi_{0} < \xi_{1} < \ldots < \xi_{N} < \xi_{N+1} = z$, and a set $\mc{S}'$ of functions
\[ \{\phi_{i,j} :C_{i} \to (0,1) \; | \; i=0,\ldots, N, j=0,\ldots, M_{i} \}, \]
where $C_{i} = (0,1)\times (\xi_i,\xi_{i+1})$, for each $i=0,\ldots,N$, with $\mc{S}' \sset IRP(B')$ such that, for each $i=0,\ldots, N$ and each $y\in (\xi_i,\xi_{i+1})$, the functions
\[ \phi_{i,0}(\cdot,y),\ldots,\phi_{i,M_{i}}(\cdot,y)\]
form an $r$-reparameterization of $F(\cdot,y)$.
\end{proposition}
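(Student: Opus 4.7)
The proof strategy adapts the construction of uniform $r$-reparameterizations for families of one-variable maps from \cite{PilWil-06}, replacing the o-minimal existence and compactness arguments by the effective decomposition lemmas of Section \ref{sec:lemmas} to maintain effective control of the complexity bound $B'$.

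\emph{Reduction to a rectangular domain.} By Proposition \ref{prop:impdefzerosetbounds} applied to the derivative $f'$ (which lies in $IRP$ with effective complexity, by the closure properties of Section \ref{sec:prelims}), the number of zeros of $f'$ is bounded effectively in $B$. After splitting $(w,z)$ at the corresponding values of $f$, on each resulting subinterval $(\xi, \xi')$ of $(w,z)$ the restriction of $f^{-1}$ lies in $IRP$ with effective complexity. Define $\tilde F \colon (0,1) \times (\xi, \xi') \to (0,1)^n$ by the change of variable $\tilde F(t, y) := F(a + t(f^{-1}(y) - a), y)$. Since $\tilde F$ lies in $IRP$ with effective complexity and the affine map $t \mapsto a + t(f^{-1}(y) - a)$ has slope in $(0,1)$, a uniform $r$-reparameterization of the family $\{\tilde F(\cdot, y)\}$ yields, by composition, the required family of reparameterizations of $\{F(\cdot, y)\}$.

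\emph{Reparameterization of the rectangular family by induction on $r$.} It remains to prove the analogous statement for maps on rectangular domains: given $\tilde G \colon (0,1) \times (\xi, \xi') \to (0,1)^n$ lying in $IRP$ with effectively bounded complexity, produce a uniform effective $r$-reparameterization of the one-variable maps $\tilde G(\cdot, y)$. The base case $r = 0$ is given by the identity $\phi(t, y) = t$. For the inductive step, apply the hypothesis for $r - 1$ to $\d{\tilde G}{t}$, which lies in $IRP$ with effective complexity by differentiation closure. This produces a further decomposition of $(\xi, \xi')$ and, on each resulting strip, a family $\{\psi_k\}$ of $IRP$ functions forming, for each fixed $y$, an $(r-1)$-reparameterization of $(\d{\tilde G}{t})(\cdot, y)$. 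A chain rule computation then bounds the first $r$ derivatives of $\tilde G(\cdot, y) \circ \psi_k(\cdot, y)$ and of $\psi_k(\cdot, y)$ by explicit constants $C_1, \ldots, C_r$ depending only on $r$; bounds on the missing top-order derivatives are obtained by splitting at the zeros of the relevant derivatives (via Proposition \ref{prop:impdefzerosetbounds}) and invoking Lemma \ref{lem:maxfn} to locate the maxima uniformly in $y$. A final affine rescaling $t \mapsto s + \lambda t$, with $\lambda := \min_k C_k^{-1/k}$ covering $(0,1)$ by $\lceil 1/\lambda \rceil$ intervals, brings all derivative bounds to $1$ and gives the desired $r$-reparameterization uniformly in $y$. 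The cells produced at intermediate stages are organised into the rectangular form $(0,1) \times (\xi_i, \xi_{i+1})$ required by the statement using Lemma \ref{lem:maxfn} together with its variants from Remark \ref{rmk:effzerosetdecomp}, and Lemma \ref{lem:limit} is used to handle the limits of the reparameterizing functions at the $y$-boundaries so that they extend as $IRP$ functions across each full strip.

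\emph{The main obstacle.} The principal technical difficulty is the careful bookkeeping of complexity bounds at each recursive step, to ensure that $B'$ and the numbers $N, M_0, \ldots, M_N$ all remain effectively bounded in $B$, $r$, and $n$: each application of the inductive hypothesis, of the chain rule, and of the decomposition lemmas enlarges the ambient complexity, and one must verify that the enlargement is given by a computable function of the previous data. A secondary but persistent issue is the organisation of intermediate cells into the rectangular form prescribed by the statement: the reparameterizations produced by the inductive step are naturally defined on subcells of $(0,1) \times (\xi, \xi')$ rather than on full rectangular strips, and it is the combined use of the effective decomposition lemmas from Section \ref{sec:lemmas} that allows one to recombine them into the desired form.
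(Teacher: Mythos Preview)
Your reduction to a rectangular domain via $t \mapsto a + t(f^{-1}(y) - a)$ is valid and is a genuine simplification over the paper, which works directly on the cell $C$ and carries the moving endpoint $x = f^{-1}(y)$ throughout, handling it via Lemma \ref{lem:limit}.

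The inductive step on $r$, however, does not work as written. You apply the proposition for $r-1$ to $\partial \tilde G/\partial t$, but the statement being proved requires the map to land in $(0,1)^n$, and $\partial \tilde G/\partial t$ is in general unbounded. Already at $r=1$ your scheme asks for a $0$-reparameterization of $\partial \tilde G/\partial t$, i.e.\ for $|\partial \tilde G/\partial t \circ \psi_k| \le 1$ on each chart --- precisely the conclusion you are after, not something you may assume. Even granting some form of the hypothesis, the Leibniz expansion of $(\tilde G \circ \psi_k)^{(r)}$ contains the term $(\partial \tilde G/\partial t \circ \psi_k)\cdot \psi_k^{(r)}$, and the $(r-1)$-reparameterization data gives no control on $\psi_k^{(r)}$. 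Your appeal to Lemma \ref{lem:maxfn} here is misplaced: that lemma locates where the fibrewise maximum of $g(\cdot,y)$ on $[t,1-t]$ occurs, but does not bound it; in the paper it appears only in the surface step (Lemma \ref{lem:PW4.3}), not in the curve argument. So the real obstacle is not the complexity bookkeeping you highlight, but getting the $r=1$ case to go through at all.

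The paper's construction, following \cite{Wil-15}, does not induct on $r$. For $r=1$, one adjoins the identity to the coordinates of $F$ and uses the transposed Lemma \ref{lem:effzerosetdecomp} to decompose so that on each piece some coordinate $F_k$ has maximal $|\partial F_k/\partial x|$, hence $|\partial F_k/\partial x|\ge 1$; the reparameterizing map is then $F_k(\cdot,y)^{-1}$ composed with an affine map onto its image. Since $F_k$ takes values in $(0,1)$ this image has length at most $1$, and together with $|\partial F_l/\partial x|/|\partial F_k/\partial x|\le 1$ this gives the derivative bounds --- this is where boundedness of $F$ is actually used. Lemma \ref{lem:limit} enters to show that the endpoint values $\lim_{x\to a^+}F_k(x,y)$ and $\lim_{x\to f^{-1}(y)^-}F_k(x,y)$ are piecewise $IRP$ in $y$. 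For $r>1$ there is no induction: one takes the $r=1$ output, decomposes further so that all $t$-derivatives up to order $r+1$ have constant sign on each piece, and composes with the power map $t\mapsto t^r$ (and its reflection $t\mapsto 1-t^r$); a Fa\`a di Bruno computation combined with the calculus estimate of \cite[Lemma 5.8]{Wil-15} then gives bounds effective in $r$, and a final linear subdivision brings them to $1$.
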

\begin{proof}
Our proof follows the scheme given in \cite{Wil-15}.

First suppose that $r=1$. Write $F=\langle F_1,\ldots,F_n\rangle$ and assume that the identity function is amongst the $F_i$ (at the possible cost of increasing $n$ by $1$). For each $k, l$ with $1\le k<l\leq n$, define the function $g_{k,l}\colon C \to \bbR$ by
\[
g_{k,l} (x,y) = \left(\frac{\partial F_k}{\partial x} (x,y)\right)^2 - \left(\frac{\partial F_l}{\partial x} (x,y)\right)^2.
\]
Let $g\colon C \to \bbR$ be the product of all those $g_{k,l}$ which are not identically zero (note that this product can be implicitly defined by restricted Pfaffian functions with complexity bounded effectively in $B$ and $n$). By a transposed form of Lemma \ref{lem:effzerosetdecomp} (see Remark \ref{rmk:effzerosetdecomp}), there exists an effective transposed $IRP$ decomposition of $V(g)$, i.e. there exist a positive real number $B''$ bounded effectively in $B$ and $n$, non-negative integers $L, K_{1},\ldots,K_{L}$ all bounded by $B''$, real numbers $\eta_0,\ldots,\eta_{L+1} $ with $w =\eta_0<\eta_1<\ldots<\eta_L<\eta_{L+1} =z$ and functions $a_{i,j}:(\eta_i,\eta_{i+1}) \to (a,b)$ lying in the class $IRP(B'')$, for $i=0,\ldots, L$ and $j=1,\ldots, K_{i}$, such that $a< a_{i,1}< \ldots < a_{i,K_{i}}< b$, for all $i=0,\ldots, L$, and
\[ V(g) \setminus \bigcup_{i=1}^L \,\left((a,b) \!\times\! \{ \eta_i \}\right) = \bigcup_{\substack{i=0,\ldots, L \\ j=1,\ldots, K_{i}}} \text{graph}^t(a_{i,j}) ,\]
where $\text{graph}^t(a_{i,j}) = \{ \langle x,y\rangle \in (a,b)\!\times\!(\eta_{i},\eta_{i+1}) \; | \; x=a_{i,j}(y)\}$.

We also set $a_{i,0}(y)=a$ and $a_{i,K_{i}+1}(y)= f^{-1}(y)$, for all $i = 0,\dots,L$ and all $y\in (\eta_i,\eta_{i+1})$, the latter of which is well defined as $f$ is decreasing and analytic, hence strictly decreasing, and $f^{-1}$ is defined everywhere on $(w,z)$ and lies in the class $IRP(B)$.

For any $k,l$ with $1\le k<l\le n$, the functions
\[  \left| \frac{\partial F_k}{\partial x}(x,y) \right| -\left| \frac{\partial F_l}{\partial x}(x,y)\right|\]
have constant sign on each of the sets
\[ D_{i,j} := \{ \langle x,y\rangle \in (a,b)\!\times\! (\eta_{i},\eta_{i+1}) \;|\; x \in (a_{i,j}(y),a_{i,j+1}(y))\}, \]
for $i=0,\ldots,L$, $j=0,\ldots,K_{i}$. Therefore, for each $i=0,\ldots, L$, $j=0,\ldots,K_{i}$, there is a $k_{i,j} \in \{1,\ldots,n\}$ such that, for all $\langle x,y\rangle\in D_{i,j}$,
\[ \addtag \label{eq:Fkj>Fl}
\left| \d{F_{k_{i,j}}}{x}(x,y) \right| \ge \left| \d{F_l}{x} (x,y)\right|,
\]
for all $l=1,\ldots, n$, and, in particular,
\[ \addtag \label{eq:>1}
\left| \d{F_{k_{i,j}}}{x}(x,y) \right| \ge 1.
\]
This shows that, for a given $i\in\{0,\ldots,L\}$, either, for all $y\in (\eta_{i},\eta_{i+1})$, the function $F_{k_{i,j}}(\cdot,y)$ is strictly increasing on $(a_{i,j}(y),a_{i,j+1}(y))$, or, for all $y\in (\eta_{i},\eta_{i+1})$, the function $F_{k_{i,j}}(\cdot,y)$ is strictly decreasing on $(a_{i,j}(y),a_{i,j+1}(y))$.

\sloppy Given $i\in\{0,\ldots,L\}$, first suppose that $F_{k_{i,j}}(\cdot,y)$ is strictly increasing on $(a_{i,j}(y),a_{i,j+1}(y))$, for every $y\in (\eta_{i},\eta_{i+1})$. For each $j=0,\ldots,K_{i}$, define the functions $c_{i,j}, d_{i,j}\colon  (\eta_{i},\eta_{i+1})\to(0,1)$ as follows.

If $0< j \leq K_{i}$, define
\[
c_{i,j}(y) =F_{k_{i,j}}(a_{i,j}(y),y),
\]
so there is some positive real number $B_{i,j}$ bounded effectively in $B$ and $n$ such that $c_{i,j}$ restricted to $(\eta_{i},\eta_{i+1})$ lies in $IRP(B_{i,j})$.

If $j=0$, define
\[
c_{i,0}(y)= \lim_{x\to a^+} F_{k_{i,0}}(x,y).
\]
By Lemma \ref{lem:limit}, we can then refine $(\eta_{i},\eta_{i+1})$ into intervals on which the function $c_{i,0}$ is implicitly defined from restricted Pfaffian functions; there exists a positive real number $B_{i,0}$ bounded effectively in $B$, such that the number of intervals in the refinement is bounded by $B_{i,0}$ and the restriction of $c_{i,0}$ to each interval lies in $IRP(B_{i,0})$.

If $0\leq j< K_{i}$, define
\[
d_{i,j} (y) = F_{k_{i,j}} (a_{i,j+1}(y),y),
\]
so there is some positive real number $B'_{i,j}$ bounded effectively in $B$ and $n$ such that $d_{i,j}$ restricted to $(\eta_{i},\eta_{i+1})$ lies in $IRP(B'_{i,j})$.

Finally, if $j=K_{i}$, define
\[
d_{i,K_{i}}(y)= \lim_{x\to f^{-1}(y)^-} F_{k_{i,K_{i}}}(x,y),
\]
which is well defined as $f$ is strictly decreasing. Using Lemma \ref{lem:limit} as before, we may then refine $(\eta_{i},\eta_{i+1})$ into intervals on which $d_{i,K_{i}}$ is implicitly defined from restricted Pfaffian functions; there exists a positive real number $B'_{i,K_i}$ bounded effectively in $B$ such that the number of intervals in the refinement is bounded by $B'_{i,K_i}$ and the restriction of $d_{i,K_{i}}$ to each interval lies in $IRP(B'_{i,K_i})$.

If we instead suppose that $i\in\{0,\ldots,L\}$ is such that $F_{k_{i,j}}(\cdot,y)$ is strictly decreasing on $(a_{i,j}(y),a_{i,j+1}(y))$, for every $y\in (\eta_{i},\eta_{i+1})$, then we simply swap the definitions of $c_{i,j}$ and $d_{i,j}$, for each $j=0,\ldots,K_{i}$.

For each $i\in\{0,\ldots,L\}$, the result of this process is a positive real number $B_i$ bounded effectively in $B$ and $n$, a non-negative integer $N_{i}'$ bounded by $B_i$, and a sequence of reals $\eta_{i}=\nu_{i,0}<\nu_{i,1}<\ldots<\nu_{i,N_{i}'}<\nu_{i,N_{i}'+1}=\eta_{i+1}$ such that, on each interval $(\nu_{i,\i},\nu_{i,\i+1})$, for $\i = 0,\ldots,N'_{i}$, the functions $c_{i,0},d_{i,0},\ldots, c_{i,K_{i}},d_{i,K_{i}}$ lie in the class $IRP(B_i)$. Moreover, for each $i=0,\ldots,L$, $j = 0, \ldots, K_{i}$ and $y\in (\eta_{i},\eta_{i+1})$, the interval $(a_{i,j}(y),a_{i,j+1}(y))$ is mapped onto the interval $(c_{i,j}(y),d_{i,j}(y))$ by $F_{k_{i,j}}(\cdot,y)$. (Note that $d_{i,j}(y)=c_{i,j+1}(y)$, for all $i=0,\ldots,L$, $j = 0, \ldots, K_{i}-1$. However, we preserve this more general notation for clarity.)

Now let us temporarily fix $i\in\{0,\ldots,L\}$ and assume that we are working with $y$ in a fixed subinterval $(\nu_{i,\iota},\nu_{i,\iota+1}) \sset (\eta_{i},\eta_{i+1})$ as identified in the previous paragraph. Until otherwise stated, we now drop the index $i$, to keep the indexing manageable. Let us fix, for each $j = 0, \ldots, K$, the notation $G_{k_{j},y}$ for $F_{k_j}(\cdot, y)$, where $k_{j}$ is identified in the manner above as the index in $\{1,\ldots,n\}$ such that, for all $l = 1,\ldots,n$,
\[\md{\frac{\partial F_{k_{j}}}{\partial x}(x,y)} \geq \md{\frac{\partial F_{l}}{\partial x}(x,y)}\]
on the set $\{\langle x,y\rangle \in (a,b)\times(\nu_{\iota},\nu_{\iota+1})\; | \; x \in (a_{j}(y),a_{j+1}(y))\}$. Define, for each $j= 0, \ldots,K$, the function $\mu_{\iota,j}: (0,1)\times (\nu_{\iota},\nu_{\iota+1}) \to (0,1)$ by
\[
\mu_{\iota,j}(x,y) = G_{k_{j},y}^{-1} (c_j(y)- (d_j(y)-c_j(y))x).
\]
There exists a positive real number $B'''$ bounded effectively in $B$ and $n$ such that these functions lie in the class $IRP(B''')$.

Moreover, if we set the notation $\psi_{\iota,j,y} = \mu_{\iota,j}(\cdot,y)$, then, for each $y \in (\nu_{\i},\nu_{\i+1})$,
\[ \textrm{Im}(\psi_{\iota,j,y}) = \begin{cases} (a_{j}(y),a_{j+1}(y)) & \textrm{if } j=0,\ldots,K-1 \\ (a_{K}(y),f^{-1}(y)) & \textrm{if } j=K.\end{cases}\]
We also have that, for all $x \in (0,1)$, $y \in (\nu_{\i},\nu_{\i+1})$ and $j=0,\ldots,K$, \[ G_{k_{j},y} (\psi_{\iota,j,y}(x)) = c_{j}(y) + (d_{j}(y) - c_{j}(y))x,\]
from which it follows that
\[\left|\psi_{\iota,j,y}'(x)\right|= \left|\frac{d_{j}(y)-c_{j}(y)}{G'_{k_{j},y}(\psi_{\iota,j,y}(x))}\right| \leq 1, \]
using (\ref{eq:>1}), and, for all $l = 1,\ldots,n$,
\begin{eqnarray*} \left|(G_{l,y}\circ \psi_{\iota,j,y})'(x)\right| & = & \left| G'_{l,y}(\psi_{\iota,j,y}(x))\right|\cdot\left|\psi'_{\iota,j,y}(x)\right|\\
& = & \frac{\left|G'_{l,y}(\psi_{\iota,j,y}(x))\right|\cdot\left|d_{j}(y)-c_{j}(y)\right|}{\left|G'_{k_{j},y}(\psi_{\iota,j,y}(x))\right|}\\
& \leq & \left|d_{j}(y)-c_{j}(y)\right| \textrm{ (by (\ref{eq:Fkj>Fl}))}\\
& \leq & 1.
\end{eqnarray*}
Thus (reintroducing the parameter $i$), for each $y \in (\nu_{\iota},\nu_{\iota+1}) \sset (\eta_{i},\eta_{i+1})$, for each $i = 0, \ldots, L$ and each $\i =  0,\ldots,N'_{i}$, the set of functions $\{\psi_{\iota,0,y},\ldots,\psi_{\iota,K_{i},y}\}$ (or, in the previous notation, the set of functions $\{\mu_{\iota,0}(\cdot,y),\ldots,\mu_{\iota,K_{i}}(\cdot,y)\}$) together with the functions $\hat{a}_{i,j}(\cdot,y)$, where $\hat{a}_{i,j}(x,y) := a_{i,j}(y)$, for each $j = 1,\ldots, K_{i}$ and each $x \in (0,1)$, is a $1$-reparameterization of $F(\cdot,y)$, in a uniform sense.

Clearly, if we take $N := \sum_{i=0}^{L} N_{i}'$, $M_{i} := 2K_{i} + 1$ for each $i=1,\ldots,N$, and the real numbers $\x_{0}, \ldots, \x_{N+1}$ to be the list $\n_{0,0},\ldots,\n_{L,N_{L}'+1}$, we now have the required parameterization \[\mc{S}'_{1} = \{\phi_{i,j} :C_{i} \to (0,1) \; | \; i=0,\ldots, N, j=0,\ldots, M_{i} \} \] in the case that $r=1$.

We now continue, and prove the statement in the case that $r>1$. Still following the approach of \cite{Wil-15}, we fix an index $i \in \{0,\ldots,N\}$, and, using the terminology from the previous paragraph, we define $\widetilde{E}_{i}:=((a,b) \times (\x_{i},\x_{i+1})) \cap C$, although from now on we will again drop the index $i$ in order to make the presentation legible (we have $\widetilde{E} = \widetilde{E}_{i}$, or $M=M_i$, for example). We then define
\[\widetilde{F}\colon \widetilde{E} \to (0,1)^{(2M+1)(n+1)}\] by
\begin{multline*}
\widetilde{F}(x,y) = \langle \phi_{0}(x,y), \ldots, \phi_{M}(x,y), F_{1}(\phi_{0}(x,y)), \ldots, F_{n}(\phi_{M}(x,y)), \\ \hat{a}_{1}(x,y), \ldots, \hat{a}_{M}(x,y), F_{1}(\hat{a}_{1}(x,y)),\ldots, F_{n}(\hat{a}_{M}(x,y))\rangle.
\end{multline*}

Consider those functions $\widetilde{F}^{(q)}_{l}$, for $l=1,\ldots, (2M+1)(n+1)$ and $q = 0,\ldots,r+1$, such that $\widetilde{F}^{(q)}_{l}$ is not identically zero. There exists a positive real number $\tilde{B}$ bounded effectively in $B$, $n$ and $r$ such that the product of all these functions lies in $IRP(\tilde{B})$. By first decomposing and then applying a transposed form of Lemma \ref{lem:effzerosetdecomp} several times (see Remark \ref{rmk:effzerosetdecomp}), there exist a positive real number $\tilde{B}_{1}$ bounded effectively in $B$, $n$ and $r$, non-negative integers $L'$ and $K'_{0},\ldots,K'_{L'}$ all bounded by $\tilde{B}_{1}$, real numbers $\gamma_{0},\ldots,\gamma_{L'+1} $ with $ \x_{i} =\gamma_{0}<\gamma_{1}<\ldots<\gamma_{L'}<\gamma_{L'+1} =\x_{i+1}$, and functions $b_{s,t}:(\gamma_{s},\gamma_{s+1}) \to (a,b)$ for $s=0,\ldots, L'$ and $t=1,\ldots, K'_{s}$ lying in $IRP(\tilde{B}_{1})$ such that $a< b_{s,1}< \ldots < b_{s,K'_{s}}< b$, for all $s = 0, \ldots, L'$, the functions $a_{j}\!\upharpoonright_{(\gamma_{s},\gamma_{s+1})}$ are contained among the $b_{s,t}$, and
\[
\mathcal{Y} \setminus \bigcup_{s=1}^{L'} \left((a,b) \times \{ \gamma_{s}\}\right) = \bigcup_{\substack{s=0,\ldots, L' \\ t=1,\ldots, K'_{s}}} \text{graph}^t (b_{s,t}),
\]
where $\text{graph}^t (b_{s,t}) = \{ \langle x,y \rangle \in (a,b)\times(\x_{i},\x_{i+1}) \;| \;x=b_{s,t}(y)\}$, and $\mathcal{Y}$ is the union of the sets $V(\widetilde{F}^{(q)}_{l})$, for those $l=1,\ldots, (2M+1)(n+1)$ and $q = 0,\ldots,r+1$ for which $\widetilde{F}^{(q)}_{l}$ is not identically zero.

We also set $b_{s,0}(y)=a$ and $b_{s,K'_{s}+1}(y)= f^{-1}(y)$, for all $s = 0,\ldots, L'$ and all $y\in (\gamma_{s},\gamma_{s+1})$, the latter of which is well defined and lies in $IRP(B)$.

Then on each set $\{ \langle x,y \rangle \in \widetilde{E} \;| \;y \in (\gamma_{s},\gamma_{s+1}), x \in (b_{s,t}(y),b_{s,t+1}(y))\}$, for each $s = 0,\ldots,L'$ and each $t = 0,\ldots,K'_{s}$, each coordinate function of $\widetilde{F}$ either has no zeros or is identically zero.

Let us now define $\kappa_{s,t}\colon (0,1) \times (\gamma_{s},\gamma_{s+1}) \to (0,1)$, for $s=0,\ldots,L'$, $t=0,\ldots,K'_{s}$, by $\kappa_{s,t}(x,y) := b_{s,t}(y) + \frac{1}{2}(b_{s+1,t}(y)-b_{s,t}(y))x^{r}$. There exists a positive real number $\tilde{B}_{2}$ bounded effectively in $B$, $n$ and $r$ such that these functions lie in the class $IRP(\tilde{B}_{2})$.

Then, if we set the notation $\chi_{s,t,y} := \kappa_{s,t}(\cdot,y)$, we have $\left|\chi^{(q)}_{s,t,y}(x)\right|\leq r!$, for all $x\in(0,1)$, $y \in (\gamma_{s},\gamma_{s+1})$ and $q=0,\ldots,r$.

Moreover, if we now set $\widetilde{G}_{l,y}(x):=\widetilde{F}_{l}(x,y)$, for each $l=1,\ldots,(2M+1)(n+1)$ and $y \in (\gamma_{s},\gamma_{s+1})$, we can see, by carefully following the argument given in \cite{Wil-15} making use of the Fa\`a di Bruno formula (see, for example, \cite{KraPar-9202}), that there exists a positive real number $B^{*}$ bounded effectively in $r$ such that
\[\left|\left(\widetilde{G}_{l,y}\circ \chi_{s,t,y}\right)^{(q)}(x)\right|  \leq B^{*},\]
for all $x \in (0,1)$ and $q = 0, \ldots, r$ (this uses that
\[\md{\widetilde{G}^{(k)}_{l,y}\circ \chi_{s,t,y}} \leq \left(\frac{2k}{(b_{s+1,t}(y)-b_{s,t}(y))x^{r}}\right)^{k-1},\]
for all $k = 1, \ldots, r$; see \cite{Wil-15}, Lemma 5.8).

Now, for each $s=0,\ldots,L'$, $t=1,\ldots,K'_{s}$, set $\zeta_{s,t} \colon (0,1) \times (\gamma_{s},\gamma_{s+1}) \to (0,1)$ to be
\[\zeta_{s,t}(x,y) := b_{s,t+1}(y) - \frac{1}{2}(b_{s,t+1}(y)-b_{s,t}(y))x^{r}.\]
Likewise there exists a positive real number $\tilde{B}_{3}$ bounded effectively in $B$, $n$ and $r$ such that these functions lie in the class $IRP(\tilde{B}_3)$. We set the notation $\rho_{s,t,y}(x):=\zeta_{s,t}(\cdot,y)$ and then we similarly have both that $\left|\rho^{(q)}_{s,t,y}(x)\right|\leq r!$, and that there exists a positive real number $B^{**}$ bounded effectively in $r$ such that 
\[ \left|\left(\widetilde{G}_{l,y}\circ \rho_{s,t,y}\right)^{(q)}(x)\right| \leq B^{**},\]
for all $x \in (0,1)$, for all $l=1,\ldots,(2M+1)(n+1)$ and $q = 0, \ldots, r$.

Set $B':= \max\{B^{*},B^{**}\}$. These calculations show that, for each $s =0,\ldots,L'$, the following set $\mc{S}_{s}$ is almost the set we require on $(0,1) \times (\g_{s},\g_{s+1})$, but for the fact that we only know that the derivatives of the functions involved are bounded by $B'$ in modulus, and not necessarily by $1$.
\begin{eqnarray*}
\mc{S}_{s}& := & \{\kappa_{s,t} \; |\; t=0,\ldots,K'_{s}\} \cup \{\zeta_{s,t} \; |\; t=0,\ldots,K'_{s}\} \cup \\
& & \{\hat{b}_{s,t}: \langle x,y\rangle \mapsto b_{s,t}(y) \; | \; t=1,\ldots,K'_{s}\} \cup \\
& & \{ b^{\dagger}_{s,t}: \langle x,y\rangle \mapsto \frac{b_{s,t}(y)+b_{s,t+1}(y)}{2} \; | \; t=0,\ldots,K'_{s}\}.
\end{eqnarray*}

To conclude, therefore, we follow an argument similar to that of \cite{PilWil-06}, Corollary 5.1, but we include the details here to demonstrate that our argument is effective.

Let $D $  be the least integer greater than or equal to $B'$. Fix $s \in\{0,\ldots,L'\}$. 

For each of the functions $\t\colon (0,1) \times (\g_{s},\g_{s+1}) \to (0,1)$ lying in $\mc{S}_{s}$ and for each $k = 0,\ldots,D-1$ define the function $\t_{k}:(0,1)\times (\g_{s},\g_{s+1}) \to (0,1)$ by $\t_{k}(x,y):=\t\left(\frac{x+k}{D},y\right)$. We then set $\t_{k,y}(x):=\t_{k}(x,y)$, for each $y \in (\g_{s},\g_{s+1})$. It follows that
\[\left|\t^{(q)}_{k,y}(x)\right|, \left|(\widetilde{G}_{l,y}\circ\t_{k,y})^{(q)}(x)\right| \leq \frac{B'}{D^q} \leq \frac{B'}{D} \leq 1,\]
for all $x\in(0,1)$, and for all $l=1,\ldots,(2M+1)(n+1)$, $q=1,\ldots,r$, $\t \in \mc{S}_{s}$ and $k = 0,\ldots,D-1$. Note that, for the $q=0$ cases, we already have that $F$ and all functions $\t_{k,y}$ are bounded in modulus by $1$.

Now set $\hat{\t}_{k}(x,y):=\t\left(\frac{k}{D},y\right)$, for each $\t \in \mc{S}_{s}$ and $k = 1,\ldots,D-1$. This gives us that the following set
\begin{eqnarray*}
\mc{S}_{s}'& := & \{\t_{k} \; |\; \t \in \mc{S}_{s}, k=0,\ldots,D-1\} \\
& & \cup \; \{\hat{\t}_{k} \; |\; \t \in \mc{S}_{s}, k=1,\ldots,D-1\}
\end{eqnarray*}
consists of functions $\sg : (0,1) \times (\g_{s},\g_{s+1}) \to (0,1)$ such that the set $\{\sg(\cdot,y):(0,1)\to(0,1)\;|\;\sg \in \mc{S}'\}$ is an $r$-reparameterization of $F(\cdot,y)$, for each $y \in (\g_{s},\g_{s+1})$.

Note further that, for each $s = 0, \ldots, L'$, each $\t \in \mc{S}_{s}$ lies in the class $IRP(\tilde{B}_{4})$, for some positive real number $\tilde{B}_{4}$ which is effective in $B$, $n$ and $r$, and so there is evidently a positive real number $\tilde{B}_{5}$, effective in $B$, $n$, $r$ and $B'$, and hence in $B$, $n$ and $r$, such that, for all $s = 0, \ldots, L'$ and $\t \in \mc{S}_{s}$, the corresponding functions $\t_{k}$ and $\hat{\t}_{k}$ lie in $IRP(\tilde{B}_{5})$. Moreover, $\#\mc{S}_{s}'$ is bounded effectively in $\#\mc{S}_{s}$ and the constant $D$, and hence in $B$, $n$ and $r$.

This completes the proof in the case $r>1$, when one repeats this process for every interval $(\x_{i},\x_{i+1})$, $i=0,\ldots,N$. First take, as the new list of $\xi_{i}$s, the ordered list of all $\gamma_{i,s}$s produced, and then take the set of functions $\phi_{i,j}$ required to be the union of the sets $\mc{S}'_{s}$ given by the above method for each horizontal strip $(a,b)\times (\gamma_{i,s},\gamma_{i,s+1})$ in turn. 
The method shows that there is a bound on the number of $\xi_{i}$s and on the number of functions needed which is effective in $B$, $n$ and $r$. \end{proof}

\begin{remark}\label{rmk:curves}
It is easy to see that, by slight modifications of the proof of Lemma \ref{prop:curvescells}, we may obtain analogous results for $f$ increasing or $f$ constant on $(a,b)$. We leave the reader to formulate the appropriate statements. We may then straightforwardly combine these cases to obtain analogous statements for $f$ monotone or constant with $F$ rather defined on the cell $C := (0,f)_{(a,b)}$, where $0$ is the constant function taking that value. It is this formulation that we will apply in what follows. Also note that, as a very special case of the version of Lemma \ref{prop:curvescells} for $f$ constant, we can reparameterize a single function in $IRP$ defined on an interval in the same way.
\end{remark}

\section{Effective parameterization for surfaces}
\label{sec:surfaces}
The goal of this section is to prove Theorem \ref{thm:effpara_2}, an effective parameterization theorem for surfaces implicitly defined from restricted Pfaffian functions. We follow here the outline of the strategy of \cite{PilWil-06}, with suitable modifications made to allow us to avoid the use of ineffective tools such as the Compactness Theorem and appeals to o-minimality. As in \cite{PilWil-06}, we obtain parameterization of a surface via reparameterization of a suitable two-variable function. This reparameterization is itself obtained, as mentioned at the beginning of Section \ref{sec:curves}, from a uniform family of reparameterizations of the members of a family of one-variable functions. Roughly, this reparameterization will handle the behaviour of the function's derivatives in the first variable; in order to handle the derivatives in the second variable, we need to be able to reduce to the situation in which those derivatives are bounded. As in \cite{PilWil-06}, we consider truncations of a given function $f$ (which necessarily will have bounded derivatives), reparameterizing, letting the truncations converge to $f$, and showing that the reparameterizations of the truncated functions converge to something sufficiently close to a reparameterization of the original function $f$ that it will provide what we need.

We begin with some very useful notation followed by a helpful lemma.

\begin{notation}
Let $n$ be a non-negative integer and let $a$, $b$ be real numbers such that $0 \leq a < b \leq 1$. For maps $f: (0,1) \times (a,b) \to \bbR^{n}$ and $\phi\colon (0,1) \to Y$, for $Y \sset (a,b)$, we define $f_{\phi}\colon (0,1)^{2} \to \bbR^{n}$ by $f_{\phi}(x,y) = f(x,\ph(y))$.
\end{notation}

\begin{lemma} \label{lem:bound1subs}
Let $r$ be a non-negative integer, let $a$, $b$ be real numbers such that $0\leq a < b \leq 1$, and let $B'$ be a positive real number. Let $A$ be a subset of $\{\langle \a_{1},\a_{2}\rangle \in \bbN^{2} \; | \; \md{\a} \leq r \}$ and let $f:(0,1)\times (a,b) \to (0,1)^{n}$ be a $C^{r}$ map. Suppose that there exist a set $\mathcal{S}' \sset IRP(B')$ of at most $B'$ functions $\phi\colon (0,1)\to(a,b)$ and a finite set $X' \sset (a,b)$ of size at most $B'$ such that
\begin{enumerate}[(i)]
\item $(a,b)\setminus X' = \bigcup_{\phi \in \mc{S}'}$Im$(\phi)$;
\item $\norm{\phi^{(q)}} \leq B'$, for all $\phi \in \mc{S}'$ and all $q=0,\ldots,r$;
\item $\norm{(f _\phi)^{(\a)}} \leq 1$, for all $\phi \in \mc{S}'$ and all $\a \in A$ with $\a_{2} = 0$;
\item $\norm{(f _\phi)^{(\a)}} \leq B'$, for all $\phi \in \mc{S}'$ and all $\a \in A$ with $\a_{2} > 0$.
\end{enumerate}
There exist a positive real number $B$, which is bounded effectively in $B'$, a set $\mc{S}\sset IRP(B)$, and a finite set $X\sset(a,b)$, such that $\mc{S}$ is an $r$-parameterization of $(a,b)\setminus X$, the cardinalities of both $\mc{S}$ and $X$ are bounded by $B$, and, for all $\psi \in \mc{S}$, $\norm{(f_{\psi})^{(\a)}} \leq 1$, for all $\a \in A$.
\end{lemma}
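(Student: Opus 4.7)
The plan is to exploit the standard contraction trick: subdivide the unit interval into enough equal pieces that composing with the corresponding affine maps scales the problematic derivatives down below $1$. More precisely, I would take $D$ to be the least positive integer that is at least $B'$, and for each $\phi \in \mathcal{S}'$ and each $k=0,\ldots,D-1$ define the function $\psi_{\phi,k}\colon (0,1) \to (a,b)$ by
\[ \psi_{\phi,k}(y) = \phi\!\left(\frac{y+k}{D}\right). \]
Then $\mathcal{S} := \{\psi_{\phi,k} : \phi \in \mathcal{S}',\ k=0,\ldots,D-1\}$ is the intended collection. Membership of each $\psi_{\phi,k}$ in $IRP(B)$ for some $B$ effective in $B'$ follows from the observation (recorded in Section \ref{sec:prelims}) that $IRP$ is closed under composition with effective control of complexity, since composing $\phi$ with the affine map $y \mapsto (y+k)/D$ is straightforward.

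The key calculation is the chain rule, which is trivialised here because the inner map is affine. Writing $u(y) = (y+k)/D$ and $h(x,z) = f(x,\phi(z)) = f_\phi(x,z)$, we have $f_{\psi_{\phi,k}}(x,y) = h(x, u(y))$, and since $u'(y) = 1/D$ and all higher derivatives of $u$ vanish,
\[ (f_{\psi_{\phi,k}})^{(\a)}(x,y) = \frac{1}{D^{\a_2}}\,(f_\phi)^{(\a)}(x,u(y)) \qquad \text{and} \qquad \psi_{\phi,k}^{(q)}(y) = \frac{1}{D^q}\,\phi^{(q)}(u(y)). \]
For the derivative bounds on $\psi_{\phi,k}$: the $q=0$ case is automatic because $\psi_{\phi,k}$ takes values in $(a,b) \subseteq (0,1)$, while for $1 \leq q \leq r$ we have $|\psi_{\phi,k}^{(q)}| \leq B'/D^q \leq B'/D \leq 1$ by the choice of $D$. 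For the derivative bounds on $f_{\psi_{\phi,k}}$: if $\a \in A$ has $\a_2 = 0$, the bound is immediate from hypothesis (iii); if $\a_2 > 0$, the factor $1/D^{\a_2}$ pushes the existing bound $B'$ from hypothesis (iv) down to $B'/D \leq 1$.

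To finish, observe that $\bigcup_{k=0}^{D-1} \mathrm{Im}(\psi_{\phi,k}) = \mathrm{Im}(\phi) \setminus \{\phi(k/D) : k=1,\ldots,D-1\}$, so taking
\[ X := X' \cup \bigcup_{\phi \in \mathcal{S}'}\{\phi(k/D) : k=1,\ldots,D-1\} \]
ensures $(a,b) \setminus X = \bigcup_{\psi \in \mathcal{S}} \mathrm{Im}(\psi)$. Both $\#\mathcal{S} \leq B' D$ and $\#X \leq B' + B'(D-1)$ are bounded effectively in $B'$, and so is the complexity bound $B$, completing the construction. There is no real obstacle here: the lemma is essentially a packaging of the rescaling argument already used in the final step of the proof of Proposition \ref{prop:curvescells}, stated in a form tailored to be applied later when reducing derivatives in the second variable to within the parameterization bound.
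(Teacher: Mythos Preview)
Your proof is correct and follows essentially the same rescaling argument as the paper's own proof: take $D=\lceil B'\rceil$, precompose each $\phi\in\mathcal{S}'$ with the affine maps $y\mapsto (y+k)/D$, and use the factor $D^{-\alpha_2}$ to bring the $\alpha_2>0$ derivative bounds down from $B'$ to $1$. Your description of the excluded set $X$ as $X'\cup\bigcup_{\phi}\{\phi(k/D):1\le k\le D-1\}$ is in fact slightly more careful than the paper's, which writes $X'\cup\{1/D,\ldots,(D-1)/D\}$ (a minor slip, since it is the images $\phi(k/D)$ that may be missed, not the points $k/D$ themselves).
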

\begin{proof}
Let $X'\sset (a,b)$ and $\mc{S}'$ be as in the hypotheses of the lemma. The construction of $\mc{S}$ follows a similar argument to the final step in the proof of Lemma \ref{prop:curvescells}. Let $D$ be the least integer greater than or equal to $B'$. For each $\phi \in \mc{S}'$ and each $k=0,\ldots,D-1$, define $\phi_{k}\colon (0,1) \to (0,1)$ by $\phi_{k} = \phi\circ\lambda_{k}$, where $\lambda_{k}\colon (0,1) \to (0,1)$ is the linear function given by $\lambda_{k}(x) = \frac{x+k}{D}$.

Set $\mc{S}:=\{\phi_{k} \; | \; \phi \in \mc{S}', k=0,\ldots,D-1 \}$, and $X := X' \cup \{\frac{1}{D},\ldots,\frac{D-1}{D}\}$. Clearly $\mc{S}$ is an $r$-parameterization of $(a,b)\less X$.

Fix $k\in \{0,\ldots,D-1\}$ and $\a \in A$. If $\a_{2} = 0$, then we have $(f_{\phi_{k}})^{(\a)} = (f_{\phi})^{(\a)}$ and, by assumption, $\norm{(f_{\phi})^{(\a)}} \leq 1$, so we are done. If $\a_{2} > 0$, then, using $(f_{\phi_{k}})^{(\a)} = \left(\left(f_{\phi}\right)_{\lambda_{k}}\right)^{(\a)} = \frac{\left(\left(f_{\phi}\right)^{(\a)}\right)_{\lambda_{k}}}{D^{\a_{2}}}$, we have that
\[
\norm{(f_{\phi_{k}})^{(\a)}} \leq \frac{\norm{\left(f_{\phi}\right)^{(\a)}}}{D^{\a_{2}}} \leq \frac{B'}{D^{\a_{2}}} \leq \frac{B'}{D} \leq 1.
\]
Finally note that, by effective choice of $D$ in terms of $B'$, and by the construction of the functions in $\mc{S}$, there clearly exists a positive real number $B$, effective in $B'$, such that $\mc{S} \sset IRP(B)$ and $\#\mc{S}$ is bounded by $B$.
\end{proof}

We come now to the first result containing the key idea described in the introduction to this section, namely the use of reparameterization to reduce from a situation in which the derivatives of a two-variable function with respect to the first variable are bounded to one in which derivatives with respect to both variables are bounded. For the analogous result in the context of the Pila--Wilkie Theorem we refer to \cite[Lemma 4.3]{PilWil-06}, but our proof of the following statement demonstrates that effective bounds can be obtained and that we can remain within the setting of functions implicitly defined from restricted Pfaffian functions.

\begin{lemma}\label{lem:PW4.3}
Let $a$, $b$ be real numbers such that $0\leq a < b \leq 1$, and let $B$ be a positive real number. Suppose that $f:(0,1)\times (a,b) \to (0,1)$ is a function lying in the class $IRP(B)$. Suppose further that, for all $\langle x,y \rangle \in (0,1)\times (a,b)$, we have
\[
\md{\frac{\partial f}{\partial x}(x,y)} \leq 1.
\]
For each integer $r\geq 2$, there exist a positive real number $B'$ bounded effectively in $B$ and $r$, a finite set $X \sset (a,b)$ and an $(r-1)$-parameterization $\mathcal{S}$ of the cofinite set $(a,b)\setminus X$ such that $\mathcal{S} \sset IRP(B')$, the cardinalities of $\mc{S}$ and of the finite set $X$ are at most $B'$ and, for each $\phi \in \mathcal{S}$, the function $f_{\phi}$ has both of its first-order partial derivatives bounded by $1$.
\end{lemma}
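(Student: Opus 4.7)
My plan is as follows. Since the $x$-derivative condition $|\partial f_{\phi}/\partial x| \leq 1$ is automatic from the hypothesis, the task reduces to finding a reparameterization $\phi$ so that $|\partial f/\partial y(x, \phi(y)) \cdot \phi'(y)| \leq 1$ uniformly in $(x,y) \in (0,1)^{2}$. Following the truncation strategy of \cite{PilWil-06} outlined in the section introduction, I would truncate to $x \in [t, 1-t]$ for each $t \in (0, 1/2)$, build an $IRP$ reparameterization $\phi^{(t)}$ adapted to this truncation, and then let $t \to 0^{+}$.

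First I would apply Lemma \ref{lem:maxfn} to both $\partial f/\partial y$ and $-\partial f/\partial y$ (each of which lies in $IRP$ with complexity bounded effectively in $B$), yielding a cofinite partition of $(a,b)$ into intervals $(\alpha_{i}, \alpha_{i+1})$ and, on each corresponding cell $(0, g_{i})_{(\alpha_{i}, \alpha_{i+1})}$, functions $\psi_{i}^{\pm}(y,t)$ in $IRP$ which detect the argmaxima of $\pm\partial f/\partial y(\cdot, y)$ on $[t, 1-t]$. Applying Lemma \ref{lem:effzerosetdecomp} to further decompose each cell according to which of $\pm \partial f/\partial y(\psi_{i}^{\pm}, y)$ is larger, we obtain, on each sub-cell, a single $IRP$ argmax function $\psi(y,t)$ such that $M(y,t) := |\partial f/\partial y(\psi(y,t), y)|$ equals $\sup_{x \in [t, 1-t]} |\partial f/\partial y(x, y)|$.

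The main obstacle is then to construct, for each admissible $t > 0$ and on each piece of the partition, a reparameterization $\phi^{(t)} \colon (0,1) \to (\alpha_{i}, \alpha_{i+1})$ in $IRP$ with $(\phi^{(t)})^{(q)}$ bounded for $q \leq r-1$ and $|(\phi^{(t)})'(y)| \cdot M(\phi^{(t)}(y), t) \leq C$ for an effective constant $C$. The key idea is that the $IRP$ function $\eta(y) := f(\psi(y, t), y)$ satisfies
\[
\eta'(y) \;=\; \partial f/\partial x(\psi(y,t), y) \cdot \partial_{y}\psi(y, t) \;\pm\; M(y, t),
\]
with the sign depending on which of $\partial f/\partial y$ or $-\partial f/\partial y$ yields the maximum, and so $M(y, t) \leq |\eta'(y)| + |\partial_{y}\psi(y, t)|$, since $|\partial f/\partial x| \leq 1$. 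Applying Remark \ref{rmk:curves} (after splitting into monotone pieces via Proposition \ref{prop:effmon}) to the $IRP$ map $(\eta, \psi(\cdot, t))$, I would obtain a reparameterization $\phi^{(t)}$ for which both $|\eta'(\phi^{(t)}) \cdot (\phi^{(t)})'| \leq 1$ and $|\partial_{y}\psi(\phi^{(t)}, t) \cdot (\phi^{(t)})'| \leq 1$, so that $|M(\phi^{(t)}, t) \cdot (\phi^{(t)})'| \leq 2$. The critical role of the $|\partial f/\partial x| \leq 1$ hypothesis enters here, controlling the error term introduced by the motion of the argmax.

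Next, I would take the limit $t \to 0^{+}$ via Lemma \ref{lem:limit}, applied coordinate-wise to the $IRP$ definition of $\phi^{(t)}$. After a further effective refinement of the partition of $(\alpha_{i}, \alpha_{i+1})$, with the exceptional set $X$ collecting finitely many points (including the endpoints, as well as any places where the limit degenerates, for instance where $\sup_{x \in (0,1)} |\partial f/\partial y(x,y)|$ is infinite), the pointwise limit $\phi^{(0)}$ lies piecewise in $IRP$ with complexity bounded effectively in $B$ and $r$. Since $M(y,t)$ is non-decreasing as $t$ decreases and converges to $\sup_{x \in (0,1)} |\partial f/\partial y(x,y)|$, the inequality $|(\phi^{(0)})'(y) \cdot \partial f/\partial y(x, \phi^{(0)}(y))| \leq 2$ extends to all $x \in (0,1)$. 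Finally, I would invoke Lemma \ref{lem:bound1subs} to promote this bound from $2$ to $1$ and to arrange that all derivatives of $\phi^{(0)}$ up to order $r-1$ are also bounded by $1$, yielding the required $(r-1)$-parameterization $\mathcal{S}$ with cardinality and $IRP$-complexity bounded effectively in $B$ and $r$.
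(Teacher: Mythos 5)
Your proposal follows essentially the same route as the paper's proof: detect the argmax of $|\partial f/\partial y(\cdot,y)|$ on $[t,1-t]$ via Lemma \ref{lem:maxfn} (the paper applies it to $(\partial f/\partial y)^2$ rather than to $\pm\,\partial f/\partial y$, a cosmetic difference), uniformly reparameterize the pair $\langle \psi(\cdot,t), f(\psi(\cdot,t),\cdot)\rangle$ in the truncation parameter $t$ via Proposition \ref{prop:curvescells} and Remark \ref{rmk:curves}, use the chain rule together with $|\partial f/\partial x|\leq 1$ to bound $|\phi'\cdot \partial f/\partial y|$ by an absolute constant, pass to the limit $t\to 0^{+}$ with Lemma \ref{lem:limit}, and rescale with Lemma \ref{lem:bound1subs}. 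The one small inaccuracy is that Lemma \ref{lem:bound1subs} cannot itself ``arrange'' the bounds on the derivatives of $\phi^{(0)}$ up to order $r-1$ (nor the convergence $(\phi^{(t)})'\to(\phi^{(0)})'$ used to transfer the estimate, which is where $r\geq 2$ enters); as in the paper, these come from the fact that pointwise limits of the uniform family of $r$-reparameterizations retain $C^{r-1}$ bounds (\cite{Tho-12}, Corollary 3.7), after which Lemma \ref{lem:bound1subs} only fixes the constant.
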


\begin{proof}
We begin by applying Lemma \ref{lem:maxfn} to the function $g \colon (0,1) \times (a,b) \to \bbR$ defined by
\[\langle x,y\rangle \mapsto \left(\frac{\partial f}{\partial y}(x,y)\right)^2.\]
This gives us the existence of a positive real number $B_1$ bounded effectively in $B$, a non-negative integer $L$ bounded by $B_1$, real numbers $a_0, \ldots, a_{L+1}$ with $a=a_0<a_1<\ldots<a_L<a_{L+1}=b$, as well as functions $f_l:(a_l,a_{l+1})\to (0,1)$ lying in the class $IRP(B_1)$ such that, on each cell $C_{l}:=(0,f_l)_{(a_l,a_{l+1})}$ (with $0$ the constant function), there is a function $s_{l} \colon C_{l} \to (0,1)$ lying in the class $IRP(B_1)$ which detects maximums of $g$. By Proposition \ref{prop:effmon}, we may also assume that that each $f_l$ is either strictly monotonic or constant (increasing $B_1$ if necessary by an effective amount).

For each $l = 0, \ldots, L$, define the map $h_{l}\colon C_{l} \to (0,1)^{2}$ by \[h_{l}(y,t) = \langle s_{l}(y,t), f(s_{l}(y,t),y) \rangle.\] Appealing to Remark \ref{rmk:curves}, we may apply an improved version of Lemma \ref{prop:curvescells} to $h_{l}$, for each $l = 0, \ldots, L$ in turn, to obtain a positive real number $\xi$, a positive real number $B_2$ bounded effectively in $B$ and $r$, and, for each $l=0,\ldots, L$, a non-negative integer $M_{l}$ bounded by $B_2$ and functions $\phi_{l,0},\ldots,\phi_{l,M_{l}}:(0,1)\times (0,\xi) \to (a_l,a_{l+1})$ lying in $IRP(B_2)$ such that, for each positive real number $t<\xi$, the functions $\phi_{l,0}(\cdot,t),\ldots, \phi_{l,M_{l}}(\cdot, t)$ form an $r$-reparameterization of \[h_{l}(\cdot,t) \colon \{y \in (a_l,a_{l+1}) \; | \; \langle y,t\rangle \in C_{l}\} \to (0,1)^{2}.\] (Note that Lemma \ref{prop:curvescells} also provides us with parameterizations further up the $t$-axis, but we do not need these.)

Fix $l \in \{0, \ldots, L\}$. Define, for each $j = 0,\ldots,M_{l}$, the pointwise limit function $\mu_{l,j}\colon (0,1) \to [a_l,a_{l+1}]$ given by $\mu_{l,j}(y)=\lim_{t\to 0^{+}}\phi_{l,j}(y, t)$. By Lemma \ref{lem:limit}, there exist a positive integer $B_3$ bounded effectively in $B$ and $r$ and, for each $j = 0, \ldots, M_{l}$, a positive integer $N_{l,j}$ bounded by $B_3$ and real numbers $b_{l,j,0},\ldots,b_{l,j,N_{j}+1}$ with $0=b_{l,j,0}<b_{l,j,1}<\ldots<b_{l,j,N_{j}}<b_{l,j,N_{j}+1}=1$ such that, on each interval $(b_{l,j,i},b_{l,j,i+1})$, the restriction of $\mu_{l,j}$ lies in $IRP(B_3)$. By subdividing further in an effective way, using Proposition \ref{prop:effmon}, we may also assume that the restriction of $\mu_{l,j}$ to each interval $(b_{l,j,i},b_{l,j,i+1})$ is monotonic or constant.

Now, for each $l=0,\ldots,L$, $j = 0,\ldots,M_{l}$ and $i=0,\ldots,N_{l,j}$, define $\psi_{l,j,i} \colon (0,1) \to [a_{l},a_{l+1}]$ to be the function $\psi_{l,j,i}(y) = \mu_{l,j}((b_{l,j,i+1} - b_{l,j,i})y + b_{l,j,i})$. Then each $\psi_{l,j,i}$ is monotonic or constant, and there exists a positive real number $B_4$ bounded effectively in $B$ and $r$ such that each $\psi_{l,j,i}$ lies in $IRP(B_4)$. We set, for each $l=0,\ldots,L$,
\[\mc{S}_{l,0} := \{ \psi_{l,j,i}\;|\; j = 0,\ldots,M_{l}, i = 0,\ldots,N_{l,j}\textrm{ and } \ex y\,\in(0,1) \;\psi_{l,j,i}(y)\notin \{a_{l},a_{l+1}\} \}.\]
The union of these sets as $l$ varies will be almost the $(r-1)$-reparameterization that we require. Note that the functions in $\mc{S}_{l,0}$ are $C^{r-1}$ (indeed are they are analytic). Moreover, by Corollary 3.7 of \cite{Tho-12} (see also Remark 4.1 of \cite{PilWil-06}), they have derivatives up to order $r-1$ bounded by $1$. In addition, there exists a finite set $X_{l} \sset (a_l,a_{l+1})$ such that the functions in $\mc{S}_{l,0}$ cover $(a_l,a_{l+1}) \less X_{l}$. Therefore $\mc{S}_{l,0}$ is in fact an $(r-1)$-parameterization of $(a_l,a_{l+1}) \less X_{l}$. Moreover, the size of the finite set $X_{l}$ is bounded effectively in $M_{l}$ and the $N_{l,j}$, for $j=0,\dots,M_{l}$, hence is bounded effectively in $B$ and $r$.

It only remains to consider the first-order partial derivatives of $f_{\ps}$, for $\ps \in \mc{S}_{l,0}$, $l = 0,\ldots,L$. By the lemma hypothesis, it is immediate that $\md{\frac{\partial f_\ps}{\partial x}(x,y)} \leq 1$, for all $\langle x,y \rangle \in(0,1)^{2}$. As for bounding $\frac{\partial f_\ps}{\partial y}(x,y)$, note that, for each $\ps \in \mc{S}_{l,0}$, there exists $j \in \{0,\ldots,M_{l}\}$ and a linear function $\lmb \colon (0,1) \to (0,1)$ such that $\ps = \mu_{l,j} \circ \lmb$. Consequently
\begin{align*}
\md{\frac{\partial f_\ps}{\partial y}(x,y)} & \leq \md{\frac{\partial f_{\mu_{l,j}}}{\partial y}(x,y)},
\end{align*}
for such $j\in \{0,\ldots,M_{l}\}$ and for all $\langle x,y\rangle \in (0,1)^{2}$.
Therefore, we will consider bounding $\frac{\partial f_{\mu_{l,j}}}{\partial y}(x,y)$, for $\mu_{l,j}$ with $l = 0,\ldots, L$, $j=0,\ldots,M_{l}$, and $\langle x,y\rangle \in (0,1)^2$.

Fix $\langle x_{0},y_{0}\rangle\in (0,1)^{2}$, $l\in \{0,\ldots,L\}$ and $j \in \{0,\ldots, M_{l}\}$. Recall that, by definition, $\mu_{l,j} (y) = \lim_{t\to 0^{+}}\phi_{l,j}(y, t)$. As $r \geq 2$, we also have that $\mu'_{l,j}(y) = \lim_{t\to 0^{+}}\frac{\partial \phi_{l,j}}{\partial y}(y, t)$. Therefore, for sufficiently small $t \in (0, \x)$, both
\[\md{\frac{\partial f}{\partial y}(x_{0},\mu_{l,j}(y_{0})) - \frac{\partial f}{\partial y}(x_{0},\phi_{l,j}(y_{0},t))} \leq 1,\]
by continuity of $\frac{\partial f}{\partial y}$, and
\[\md{\mu'_{l,j}(y_{0}) - \frac{\partial \ph_{l,j}}{\partial y}(y_{0},t)} \leq \frac{1}{\md{\frac{\partial f}{\partial y}(x,\mu_{l,j}(y_{0}))}}.\]
Fix $t \in (0, \x)$ sufficiently small such that both of the previous two inequalities hold. We then have that

\begin{align}
\md{\frac{\partial f_{\mu_{l,j}}}{\partial y}(x_{0},y_{0})} & = \md{\mu'_{l,j}(y_{0})\cdot\frac{\partial f}{\partial y}(x_{0},\mu_{l,j}(y_{0}))} \nonumber\\
& \leq  \md{\frac{\partial \ph_{l,j}}{\partial y}(y_{0},t)}\cdot\md{\frac{\partial f}{\partial y}(x_{0},\mu_{l,j}(y_{0}))} + 1 \nonumber\\
& \leq  \md{\frac{\partial \ph_{l,j}}{\partial y}(y_{0},t)}\cdot\md{\frac{\partial f}{\partial y}(x_{0},\phi_{l,j}(y_{0},t))} + \md{\frac{\partial \ph_{l,j}}{\partial y}(y_{0},t)} + 1 \label{eq:deriv_fphi}.
\end{align}
\sloppy Recall that the functions $\phi_{l,0}(\cdot,t),\ldots, \phi_{l,M_{l}}(\cdot, t)$ form an $r$-reparameterization of $h_{l}(\cdot,t) = \langle s_{l}(\cdot,t), f(s_{l}(\cdot,t),\cdot) \rangle$. Therefore, we have that $\phi_{l,j}(y_{0},t) \in C_l$, and furthermore that
\begin{align}
\md{\frac{\partial}{\partial y}(s_{l}(\phi_{l,j}(y,t),t))(y_{0},t)} & \leq 1 \label{eq:derivs_l}\\
 \textrm{and } \md{\frac{\partial}{\partial y}(f(s_{l}(\phi_{l,j}(y,t),t),\phi_{l,j}(y,t)))(y_{0},t)} & \leq 1.\notag
\end{align}
The second of these inequalities gives us that
\begin{align*}
\begin{split}
\left|\frac{\partial}{\partial y}(s_{l}(\phi_{l,j}(y,t),t))(y_{0},t)\cdot\frac{\partial f}{\partial x}(s_{l}(\phi_{l,j}(y_{0},t),t),\phi_{l,j}(y_{0},t)) + \right.\\
\left. \frac{\partial \ph_{l,j}}{\partial y}(y_{0},t)\cdot\frac{\partial f}{\partial y}(s_{l}(\phi_{l,j}(y_{0},t),t),\phi_{l,j}(y_{0},t)) \right| & \leq 1.
\end{split}
\end{align*}
Combining this with (\ref{eq:derivs_l}) and with the lemma hypothesis, we see that
\begin{align*}
\md{\frac{\partial \ph_{l,j}}{\partial y}(y_{0},t)}\cdot\md{\frac{\partial f}{\partial y}(s_{l}(\phi_{l,j}(y_{0},t),t),\phi_{l,j}(y_{0},t))} \leq 2.
\end{align*}
Now note that it follows from the definition of $s_{l}$ that
\begin{align*}
\md{\frac{\partial f}{\partial y}(s_{l}(y,t),y)} & \geq \md{\frac{\partial f}{\partial y}(x,y)},
\end{align*}
for all $x \in [t,1-t]$, for all $\langle y,t\rangle \in C_{l}$. Therefore, using the fact that $\phi_{l,j}(y_{0},t) \in C_{l}$, it follows from (\ref{eq:deriv_fphi}) that

\begin{align*}
\md{\frac{\partial f_{\zeta_{l,j}}}{\partial y}(x_{0},y_{0})} & \leq \md{\frac{\partial \ph_{l,j}}{\partial y}(y_{0},t)}\cdot\md{\frac{\partial f}{\partial y}(s_{l}(\phi_{l,j}(y_{0},t),t),\phi_{l,j}(y_{0},t))} + \md{\frac{\partial \ph_{l,j}}{\partial y}(y_{0},t)} + 1\\
& \leq  2 + 1 + 1 \\
& = 4.
\end{align*}
Since the bound on $\norm{\frac{\partial f_{\mu_{l,j}}}{\partial y}}$, and hence on $\norm{\frac{\partial f_{\psi}}{\partial y}}$, is therefore out by only a factor of a positive absolute constant from the bound of $1$ that we require, we may obtain the required reparameterization by applying Lemma \ref{lem:bound1subs} to $\mc{S}':= \Un_{l=0}^{L}\mc{S}_{l,0}$ together with $X':= \Un_{l=0}^{L}\{a_{l}\} \un \Un_{l=0}^{L} X_{l}$. This gives us a finite set $X \sset (a,b)$ containing $X$ and an $(r-1)$-parameterization $\mc{S}$ of $(a,b)\less X$ with the required properties.
\end{proof}

From this we obtain the following corollary, the analogue of the previous result for two-variable maps into $(0,1)^{n}$.

\begin{corr} \label{cor:PW4.3}
Let $n$ be a non-negative integer, let $a$, $b$ be real numbers such that $0\leq a < b \leq 1$ and let $B$ be a positive real number. Suppose that $f = \langle f_{1},\ldots,f_{n}\rangle:(0,1)\times (a,b) \to (0,1)^{n}$ lies in the class $IRP(B)$. Suppose further that, for all $\langle x,y \rangle \in (0,1)\times (a,b)$, we have
\[
\md{\frac{\partial f_{j}}{\partial x}(x,y)} \leq 1,
\]
for all $j=1,\ldots,n$. For each integer $r\geq 2$, there exist a positive real number $B'$ bounded effectively in $B$, $r$ and $n$, a finite set $X \sset (a,b)$ and an $(r-1)$-parameterization $\mathcal{S}$ of the cofinite set $(a,b)\setminus X$ such that $\mathcal{S} \sset IRP(B')$, the cardinalities of $\mc{S}$ and of the finite set $X$ are bounded by $B'$ and, for each $\phi \in \mathcal{S}$, the map $f_{\phi}$ is $C^{1}$ and has both of its first-order partial derivatives bounded by $1$.
\end{corr}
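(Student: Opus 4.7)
My plan is to reduce the corollary to Lemma \ref{lem:PW4.3} by induction on $n$. The base case $n=1$ is precisely that lemma, so I would assume the result holds for maps into $(0,1)^{n-1}$ and handle the inductive step as follows.

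First, I would apply the inductive hypothesis to $\langle f_{1},\ldots,f_{n-1}\rangle$ to obtain a positive real number $B_{1}$ effective in $B$, $r$ and $n$, a finite set $X_{0}\sset(a,b)$ of size at most $B_{1}$, and an $(r-1)$-parameterization $\mc{T}\sset IRP(B_{1})$ of $(a,b)\setminus X_{0}$ with $\#\mc{T}\leq B_{1}$, such that for each $\theta\in\mc{T}$ the map $\langle f_{1},\ldots,f_{n-1}\rangle_{\theta}$ has all of its first-order partial derivatives bounded by $1$.

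Next, for each $\theta\in\mc{T}$, I would apply Lemma \ref{lem:PW4.3} to $(f_{n})_{\theta}\colon(0,1)^{2}\to(0,1)$. Effective closure of $IRP$ under composition ensures that $(f_{n})_{\theta}$ lies in $IRP(B_{1}')$ for some $B_{1}'$ effective in $B$, $r$ and $n$, and $\md{\partial(f_{n})_{\theta}/\partial x}(x,y)=\md{(\partial f_{n}/\partial x)(x,\theta(y))}\leq 1$ by hypothesis. The lemma therefore supplies a finite set $X_{\theta}\sset(0,1)$ and an $(r-1)$-parameterization $\mc{S}_{\theta}\sset IRP(B_{2})$ of $(0,1)\setminus X_{\theta}$, with $B_{2}$, $\#X_{\theta}$ and $\#\mc{S}_{\theta}$ all effective in $B$, $r$ and $n$, such that for each $\psi\in\mc{S}_{\theta}$ the function $((f_{n})_{\theta})_{\psi}=(f_{n})_{\theta\circ\psi}$ has both of its first-order partial derivatives bounded by $1$.

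The desired parameterization is then $\mc{S}:=\{\theta\circ\psi\mid\theta\in\mc{T},\,\psi\in\mc{S}_{\theta}\}$, with exceptional set $X:=X_{0}\cup\bigcup_{\theta\in\mc{T}}\theta(X_{\theta})$; both are finite with cardinality effective in $B$, $r$ and $n$, and $\mc{S}\sset IRP(B')$ for a suitable effective $B'$, again by effective closure of $IRP$ under composition. The verifications are routine. Composition of two $(r-1)$-parameterizations of intervals is again an $(r-1)$-parameterization by the Fa\`a di Bruno-type argument of \cite[Corollary 3.7]{Tho-12} already used in the proof of Lemma \ref{lem:PW4.3}. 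For $j<n$, the inductive bounds persist: $\partial(f_{j})_{\theta\circ\psi}/\partial x(x,y)=(\partial f_{j}/\partial x)(x,\theta(\psi(y)))$ is bounded by $1$ by hypothesis, while $\partial(f_{j})_{\theta\circ\psi}/\partial y(x,y)=(\partial(f_{j})_{\theta}/\partial y)(x,\psi(y))\cdot\psi'(y)$ is a product of two quantities each bounded by $1$. The case $j=n$ holds by construction. I do not anticipate a serious obstacle: the substantive content lies in Lemma \ref{lem:PW4.3}, and the corollary amounts to the observation that further $y$-reparameterization leaves the bound on $\partial/\partial x$ untouched and can only shrink the bound on $\partial/\partial y$ for the coordinates $j<n$ already handled.
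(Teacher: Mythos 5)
Your proposal follows essentially the same route as the paper: induction on $n$ with Lemma \ref{lem:PW4.3} as the base case, applying the inductive hypothesis to $\langle f_1,\ldots,f_{n-1}\rangle$, then Lemma \ref{lem:PW4.3} to each $(f_n)_\theta$, and taking the compositions $\theta\circ\psi$.

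The one step that does not hold as you state it is the claim that the composition of two $(r-1)$-parameterizations is again an $(r-1)$-parameterization. For first derivatives this is fine ($(\theta\circ\psi)'=\theta'(\psi)\,\psi'$ is bounded by $1$), but for $r\geq 3$ the higher derivatives are not: already $(\theta\circ\psi)''=\theta''(\psi)(\psi')^2+\theta'(\psi)\psi''$ can be as large as $2$, and in general Fa\`a di Bruno only gives a bound effective in $r$, not the bound $1$ required by Definition \ref{def:para} (the citation of \cite[Corollary 3.7]{Tho-12} is misplaced here: in the proof of Lemma \ref{lem:PW4.3} that result is used for the limit functions, not for compositions). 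The paper repairs exactly this point with ``a further round of linear substitutions'': one subdivides the domain into $D$ equal pieces, $D$ effective in $r$, and precomposes with $x\mapsto\frac{x+k}{D}$, as in Lemma \ref{lem:bound1subs}. This rescaling costs only finitely many extra exceptional points and functions (still effectively bounded), and it does not disturb your derivative estimates for the $f_{\theta\circ\psi}$, since precomposition in the $y$-variable with a contraction leaves $\partial/\partial x$ unchanged and only shrinks $\partial/\partial y$. With that correction inserted, the rest of your verifications (the bounds for $j<n$ by the chain rule, the case $j=n$ by construction, the effective control of $B'$, $\#\mc{S}$ and $\#X$, where $X$ should be taken to be precisely the complement of the union of the images) go through and coincide with the paper's argument.
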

\begin{proof}
The proof follows a suggestion to be found in \cite{PilWil-06}, namely that the proof of a result of this kind should follow a similar argument to that of \cite{PilWil-06}, Lemma 3.5. However, we include the details as we require that certain derivatives are not only bounded, but are bounded by $1$, and to demonstrate that effective bounds can be obtained.

We therefore proceed via induction on $n$, with Lemma \ref{lem:PW4.3} as the base case. For $n>1$, let $f\colon (0,1)\times(a,b)\to (0,1)^{n}$ be as in the statement and define $F\colon (0,1)\times (a,b) \to (0,1)^{n-1}$ by $F(x,y) = \langle f_{1}(x,y),\ldots,f_{n-1}(x,y)\rangle$. Applying the inductive hypothesis to $F$ gives us a positive real number $B'_{F}$ bounded effectively in $B$, $n$ and $r$, a finite set $X_{F} \sset (a,b)$ and an $(r-1)$-parameterization $\mathcal{S}_{F}$ of $(a,b)\less X_{F}$ such that $\mathcal{S}_{F} \sset IRP(B'_{F})$, we have $\#\mathcal{S}_{F}$, $\# X_{F} \leq B'_{F}$ and, for each $\phi \in \mathcal{S}_{F}$, the function $F_{\ph}$ is $C^1$ and has both of its first-order partial derivatives bounded by $1$.

For each $\phi \in \mc{S}_{F}$, likewise apply Lemma \ref{lem:PW4.3} to the function $(f_{n})_{\ph}$ to obtain a positive real number $B'_{\ph}$ bounded effectively in $B$ and $r$, a finite set $X_{\phi} \sset (0,1)$ and an $(r-1)$-parameterization $\mathcal{S}_{\phi}$ of $(0,1)\less X_{\phi}$ such that $\mathcal{S}_{\ph} \sset IRP(B'_{\ph})$, we have $\#\mathcal{S}_{\ph}$, $\# X_{\ph} \leq B'_{\ph}$ and, for each each $\psi \in \mathcal{S}_{\ph}$, the function $(f_{n})_{\ph\circ\ps}$ is $C^1$ and has both of its first-order partial derivatives bounded by $1$.

Now consider the set $\mc{S}:= \{\ph\circ\ps\colon(0,1) \to (a,b) \;|\; \phi \in \mathcal{S}_{F} \textrm{ and } \ps \in \mathcal{S}_{\phi}\}$. Clearly $\#\mc{S}$ is bounded effectively in terms of $B'_{F}$ and the $B'_{\ph}$, for $\phi \in \mathcal{S}_{F}$. Moreover, the functions in this set are $C^{r-1}$ (indeed they are analytic). After a further round of linear substitutions, we may further assume that they have derivatives up to order $r-1$ which are bounded by $1$. In addition, they are implicitly defined from restricted Pfaffian functions with complexity of implicit definition bounded effectively in $B'_{F}$ and in the corresponding $B'_{\ph}$, and they also cover a set of the form $(a,b)\less X$, where $X$ is a finite subset of $(a,b)$ with $\# X \leq \# X_{F} + \Sigma_{\ph \in \mc{S}_{F}}\# X_{\ph}$, i.e. $\# X$ is bounded effectively in $B'_{F}$ and in the $B'_{\ph}$, for $\phi \in \mathcal{S}_{F}$. Taking the largest of these bounds gives us a positive real number $B'$ which bounds all three that is clearly bounded effectively in $B$, $r$ and $n$. Furthermore, it is easy to see that, for each $\ph\circ\ps \in \mc{S}$ and $i=1,\ldots,n$, the function $(f_{i})_{\ph\circ\ps}$ is in $C^1$ and, via the Chain Rule, has each of its first-order partial derivatives bounded by $1$, as required.
\end{proof}

Now we come to our version of \cite{PilWil-06}, Lemma 4.4, which extends the idea of Corollary \ref{cor:PW4.3} to higher order derivatives.

\begin{lemma} \label{lem:PW4.4}
Let $n$, $r$ be non-negative integers, let $a$, $b$ be real numbers such that $0\leq a < b \leq 1$ and let $B$ be a positive real number. Suppose that $f\colon(0,1)\times (a,b)\to (0,1)^{n}$ lies in the class $IRP(B)$. Suppose further that, for all $\langle x,y \rangle\in(0,1)\times (a,b)$,
\[\left| \frac{\partial^{i} f_{j}}{\partial x^{i}}(x,y)\right|\leq 1,\]
for all $j=1,\ldots,n$ and all $i= 0, \ldots, r$.

For each non-negative integer $k$, there exist a positive real number $B_{k}$ bounded effectively in $B$, $r$, $k$ and $n$, a finite set $X_{k}\subseteq (a,b)$ and an $r$-parameterization $\mathcal{S}_{k}$ of the cofinite set $(a,b)\setminus X_{k}$ such that $\mathcal{S}_{k} \sset IRP(B_k)$, the cardinalities of $\mc{S}_{k}$ and of the finite set $X_k$ are bounded by $B_k$ and, for each $\phi \in \mc{S}_k$, the map $f_{\phi}$ is $C^{r}$ and, for each $\alpha = \langle \a_{1},\a_{2}\rangle \in \bbN^{2}$ with $\md{\alpha}\leq r$ and $\alpha_{2}\leq k$, we have $\norm{(f_\phi)^{(\alpha)}} \leq 1$.

\end{lemma}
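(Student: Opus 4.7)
The plan is to proceed by induction on $k$. For the base case $k=0$, take the single affine reparameterization $\phi_{0}\colon(0,1)\to(a,b)$ given by $\phi_{0}(y)=(b-a)y+a$: this has all derivatives of order $\leq r$ bounded by $1$, lies in $IRP$ with trivial complexity, and satisfies $(f_{\phi_{0}})^{(\alpha_{1},0)}(x,y)=f^{(\alpha_{1},0)}(x,\phi_{0}(y))$, which is bounded by $1$ by hypothesis. If $k>r$, then $\alpha_{2}\leq\md{\alpha}\leq r<k$ forces $\alpha_{2}\leq k-1$, so we may take $\mc{S}_{k}=\mc{S}_{k-1}$; hence assume $1\leq k\leq r$ in the inductive step. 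Given $\mc{S}_{k-1}$, fix $\phi\in\mc{S}_{k-1}$. We apply Corollary \ref{cor:PW4.3}, with its internal parameter taken to be $r+1$ (so as to obtain an $r$-parameterization), to the auxiliary map $G_{\phi}\colon(0,1)^{2}\to(0,1)^{n(r-k+1)}$ whose components are the partials $(f_{\phi})_{l}^{(i,k-1)}$ for $l=1,\ldots,n$ and $i=0,\ldots,r-k$, each rescaled (by $z\mapsto(z+2)/4$, say) to lie in a compact subset of $(0,1)$. By the inductive hypothesis $\norm{G_{\phi}}\leq 1$, and $\partial G_{\phi}/\partial x$ has components that are (up to the fixed rescaling) $(f_{\phi})_{l}^{(i+1,k-1)}$ with $i+k\leq r$, which are likewise bounded by $1$. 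Since $IRP$ is closed under partial differentiation and affine rescaling with effectively bounded complexities (Section \ref{sec:prelims}), $G_{\phi}$ lies in $IRP$ with complexity effective in $B$, $r$, $k$, $n$. Applying Corollary \ref{cor:PW4.3} produces an $r$-parameterization $\mc{T}_{\phi}\subset IRP$ of a cofinite subset of $(0,1)$, of cardinality and complexity both effective in the same data, such that for each $\psi\in\mc{T}_{\phi}$ both first-order partials of $(G_{\phi})_{\psi}$ are bounded by $1$. Untangling the rescaling, $\md{\psi'(y)\cdot (f_{\phi})_{l}^{(i,k)}(x,\psi(y))}$ is bounded by an absolute constant for all $l$ and all $i=0,\ldots,r-k$.

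To bound the partials of $f_{\phi\circ\psi}=(f_{\phi})_{\psi}$, apply the Fa\`a di Bruno formula to the composition $y\mapsto f_{\phi}(x,\psi(y))$ (with $x$ held fixed), yielding
\[
(f_{\phi\circ\psi})^{(i,j)}(x,y)=\sum_{\mu}c_{\mu}\,(f_{\phi})^{(i,\md{\mu})}(x,\psi(y))\prod_{s=1}^{j}\bigl(\psi^{(s)}(y)\bigr)^{\mu_{s}},
\]
where the sum ranges over tuples $\mu=(\mu_{1},\ldots,\mu_{j})$ with $\sum_{s}s\mu_{s}=j$ and $\md{\mu}=\sum_{s}\mu_{s}\leq j$. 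For $i+j\leq r$ and $j\leq k$, every term with $\md{\mu}\leq k-1$ has $(f_{\phi})^{(i,\md{\mu})}$ bounded by $1$ by the inductive hypothesis, while the $\psi^{(s)}$ factors have $s\leq j\leq r$ and hence are bounded by $1$ because $\mc{T}_{\phi}$ is an $r$-parameterization; each such term is therefore bounded by a combinatorial constant. The only other term, appearing exactly when $j=k$, is the leading one with $\mu_{1}=k$ and all other $\mu_{s}=0$, namely $c\cdot(f_{\phi})^{(i,k)}(x,\psi(y))\cdot\psi'(y)^{k}$. Here the key identity
\[
\md{\psi'(y)^{k}(f_{\phi})^{(i,k)}(x,\psi(y))}=\md{\psi'(y)}^{k-1}\cdot\md{\psi'(y)(f_{\phi})^{(i,k)}(x,\psi(y))}
\]
together with $\md{\psi'(y)}\leq 1$ and the bound from Corollary \ref{cor:PW4.3} on the second factor shows this term is also bounded by an absolute constant. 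Summing, $\norm{(f_{\phi\circ\psi})^{(\alpha)}}\leq C(r,k)$ for some constant $C(r,k)$ depending only on $r$, $k$ and the rescaling, for all $\alpha$ with $\md{\alpha}\leq r$ and $\alpha_{2}\leq k$.

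The family $\mc{S}':=\{\phi\circ\psi:\phi\in\mc{S}_{k-1},\,\psi\in\mc{T}_{\phi}\}$ then lies in $IRP$ with size and complexity effective in $B$, $r$, $k$, $n$ by closure of $IRP$ under composition, and covers $(a,b)$ off a finite set $X'$ comprising $X_{k-1}$, the images under each $\phi\in\mc{S}_{k-1}$ of the exceptional sets of $\mc{T}_{\phi}$, and the endpoints of the $\text{Im}(\phi)$. By the chain rule, the derivatives of each $\phi\circ\psi$ up to order $r$ are bounded by a constant effective in $r$ (and trivially in $k$). A final application of Lemma \ref{lem:bound1subs}, with $A=\{\alpha\in\bbN^{2}:\md{\alpha}\leq r,\,\alpha_{2}\leq k\}$ and $B'$ the maximum of $C(r,k)$ and the bounds on derivatives of the $\phi\circ\psi$, yields the desired $\mc{S}_{k}\subset IRP(B_{k})$ and $X_{k}$, with all relevant derivative bounds reduced from the intermediate constant to exactly $1$ and with $B_{k}$ effective in $B$, $r$, $k$, $n$, completing the induction.

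The main obstacle is the Fa\`a di Bruno bookkeeping, and in particular the identity $\psi'(y)^{k}=\psi'(y)^{k-1}\cdot\psi'(y)$, which allows one to absorb the ``extra'' $\psi'$ factors using the $r$-parameterization bound $\md{\psi'}\leq 1$ against the single $\psi'$-weighted bound delivered by Corollary \ref{cor:PW4.3}; this is what makes it possible to increase the $y$-derivative order from $k-1$ to $k$ in one inductive step. The effective tracking of complexities and cardinalities through each composition and rescaling is then routine given the closure properties of $IRP$ established in Section \ref{sec:prelims}.
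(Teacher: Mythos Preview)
Your proof is correct and follows essentially the same inductive strategy as the paper: apply Corollary~\ref{cor:PW4.3} (with parameter $r+1$) to an auxiliary map built from partial derivatives of $f_\phi$, compose to form $\phi\circ\psi$, control $(f_{\phi\circ\psi})^{(\alpha)}$ via Fa\`a di Bruno, and finish with Lemma~\ref{lem:bound1subs}. The only notable difference is packaging: the paper feeds \emph{all} derivatives $(f_\phi)^{(\gamma)}$ with $|\gamma|\le r-1$, $\gamma_2\le k-1$ (and all $\phi\in\mc{S}_{k-1}$ simultaneously) into one large auxiliary map and then bounds $(f_{\phi\circ\psi})^{(\alpha)}$ by first writing $(f_{\phi\circ\psi})^{(\beta)}$ for $\beta=(\alpha_1,\alpha_2-1)$ and differentiating once in $y$, whereas you include only the top layer $(f_\phi)^{(i,k-1)}$ and isolate the leading Fa\`a di Bruno term via the identity $|\psi'|^{k}=|\psi'|^{k-1}\cdot|\psi'|$ --- a slightly more explicit but equivalent bookkeeping.
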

\begin{proof}
We prove this by induction on $k$. For $k=0$ we can take $X_0$ to be the empty set and $\mc{S}_0$ to consist solely of the identity function on $(0,1)$.

Now we suppose that $\mc{S}_{k}$ and $X_{k}$ have been constructed. Let \[\Delta:= \{ \alpha=\langle \a_{1},\a_{2}\rangle \in \bbN^{2}\;|\; \left|\alpha\right|\leq r-1,\alpha_2\leq k\}\] and set $\tilde{n}=\# \Delta\cdot \#\mathcal{S}_{k}$. Let $F=\langle F_1,\ldots, F_{\tilde{n}}\rangle:(0,1)^2 \to \bbR^{\tilde{n} \cdot n}$ be a map whose component functions form an enumeration of all component functions of the maps $\left( f_\phi\right)^{(\alpha)}: (0,1)^2\to \bbR^n$, for $\ph \in \mc{S}_{k}$ and $\a \in \Delta$. Then the hypotheses of Corollary \ref{cor:PW4.3} hold for $F$, with $r+1$ in place of $r$. Applying this result, we obtain a positive real number $B'_{k+1}$ which is bounded effectively in $B$, $r$, $k$ and $n$, a finite set $Y_{k+1}\subseteq (0,1)$ and an $r$-parameterization $\mathcal{S}$ of $(0,1)\setminus Y_{k+1}$ such that $\mc{S} \sset IRP(B'_{k+1})$, we have $\#Y_{k+1}$, $\#\mc{S} \leq B'_{k+1}$ and, for each $\psi \in \mathcal{S}$ and each $i=1,\ldots,\tilde{n}$, the function $(F_i)_\psi$ is $C^1$ and has both first-order partial derivatives bounded by $1$. That is, for each $\phi\in \mathcal{S}_{k}$, each $\psi \in \mathcal{S}$ and each $\alpha \in \Delta$ we have
\begin{equation}\label{our4-4inductive}
\norm{\frac{\partial}{\partial x} \left(\left( \left( f_\phi\right)^{(\alpha)}\right)_\psi\right)}, \norm{\frac{\partial}{\partial y} \left(\left( \left( f_\phi\right)^{(\alpha)}\right)_\psi\right)} \leq 1.
\end{equation}

Let
\[
\mathcal{S}'_{k+1} = \{ \phi\circ\psi \colon (0,1) \to (a,b)\less X_{k} \; |\; \phi\in \mathcal{S}_k,\psi\in\mathcal{S}\}.
\]
Note that the functions in $\mc{S}'_{k+1}$ are implicitly defined from restricted Pfaffian functions with the complexities of their implicit definitions bounded effectively in $B, r, k$ and $n$, and moreover they are clearly $C^{r}$ (indeed they are analytic). Moreover, there is a positive real number $B'$ which is effective in $r$ such that $\norm{\left(\phi \circ \psi\right)^{(q)}} \leq B'$, for all $q=0,\ldots,r$. There is also a finite set $X'_{k+1} \sset (a,b)$ such that the functions in $\mc{S}_{k+1}'$ cover $(a,b) \less X'_{k+1}$. Furthermore, $\# X'_{k+1}$ is bounded effectively in $\# X_{k}$ and $\#Y_{k+1}$, hence by $B$, $r$, $k$ and $n$. We will show that the set $\mc{S}'_{k+1}$ is almost the set $\mc{S}_{k+1}$ that we require.

Let $\alpha = \langle \a_{1}, \a_{2}\rangle \in \bbN^2$, with $\left|\alpha\right|\le r,\alpha_{2}\le k+1$. Note that if $\alpha_2=0$, then $\left(f_{\phi\circ\psi}\right)^{(\alpha)}= \left(\left( f_\phi\right)^{(\alpha)}\right)_\psi$ and so $\left| \left|\left(f_{\phi\circ\psi}\right)^{(\alpha)}\right|\right|\leq \left|\left|\left( f_\phi\right)^{(\alpha)}\right|\right|\leq 1$, by the hypothesis of the lemma.

Now suppose that $\a_{2}>0$. We claim that there is a positive integer $B''$ depending only on $\alpha$, and effectively computable from $\alpha$, such that if $\phi\circ\psi\in \mathcal{S}'$ then $\left| \left|\left(f_{\phi\circ\psi}\right)^{(\alpha)}\right|\right|\leq B''$ on $(0,1)^2$. 
In this case $\left( f_{\phi\circ\psi}\right)^{(\alpha)}=\frac{\partial}{\partial y}\left(\left(f_{\phi\circ\psi}\right)^{(\beta)}\right)$, where $\beta=\langle \alpha_1,\alpha_2-1\rangle $ is in $\Delta$. A calculation (for example using the Fa\`a di Bruno formula) shows that, for $\langle x_0, y_0 \rangle\in(0,1)^2$, we have
\[
\left(f_{\phi\circ\psi}\right)^{(\beta)}(x,y) =P\left(\left\{ \left(\left(f_\phi\right)^{(\gamma)}\right)_\psi (x,y)\right\}_{ \left|\gamma\right|\le \left|\beta\right|},\left\{ \psi^{(j)}(y)\right\}_{j \le \alpha_2-1} \right)
\]
where $P$ is a polynomial in the data shown, with $P$ depending only on $\beta$. Differentiating both sides with respect to $y$ we have
\[
\left( f_{\phi\circ\psi}\right)^{(\alpha)} (x,y) = Q \left(\left\{ \frac{\partial}{\partial y} \left(\left( f_\phi\right)^{(\gamma)} \right)_{\psi}(x,y)\right\}_{\left|\gamma\right|\leq \left|\beta\right|},\left\{ \psi^{(j)}(y) \right\}_{j\le \alpha_2}\right)
\]
for  $\langle x_0, y_0 \rangle\in(0,1)^2$, with $Q$ a polynomial depending only on $\alpha$. Since $\psi\in\mathcal{S}$, the derivatives of $\psi$ shown are bounded by $1$ in modulus. Moreover, by \eqref{our4-4inductive} the derivatives of $f_\phi$ are also bounded by $1$ in modulus. So $\norm{\left(f_{\phi\circ\psi}\right)^{(\alpha)}} \le B''$, for some $B''$ depending only on $Q$ and so only on $\alpha$, as claimed.

Therefore, we may finish by applying Lemma \ref{lem:bound1subs} to $\mc{S}'_{k+1}$ together with $X'_{k+1}$ to obtain the required $B_{k+1}$, $\mc{S}_{k+1}$ and $X_{k+1}$.
\end{proof}

With these lemmas in place, we come to our effective parameterization and reparameterization results, the remaining statements and proofs in this section. These are our analogues to the proofs given in Section 5 of \cite{PilWil-06}.

Our first result of this kind is for cells lying in $(0,1)^{2}$ defined by functions which are implicitly defined by restricted Pfaffian functions.

\begin{theorem} \label{thm:effpara_cell2}
Let $r$ be a non-negative integer, let $a$, $b$ be real numbers such that $0 \leq a < b \leq 1$ and let $B$ be a positive real number. Suppose that $g, h\colon (a,b) \to (0,1)$ are functions lying in the class $IRP(B)$ with $g<h$. There exist a positive real number $B'$, which is bounded effectively in $B$ and $r$, and an $r$-parameterization $\mc{S}$ of the cell $(g,h)_{(a,b)}$ such that $\mc{S}\sset IRP(B')$ and the cardinality of $\mc{S}$ is bounded by $B'$.
\end{theorem}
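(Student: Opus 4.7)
My plan is to parameterize the cell $(g,h)_{(a,b)}$ by combining (i) a reparameterization of the boundary-pair $(g,h)$, regarded as a single map $(a,b)\to(0,1)^2$, with (ii) the natural affine interpolation between $g$ and $h$ in the vertical direction.

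First, I would apply the one-variable specialization of Proposition \ref{prop:curvescells} (as noted in Remark \ref{rmk:curves}; the proof works equally well for maps into $(0,1)^n$, not only for single functions) to the map $(g,h)\colon(a,b)\to(0,1)^2$. This produces, for a positive real number $B_1$ bounded effectively in $B$ and $r$, a non-negative integer $N\leq B_1$, and functions $\chi_1,\ldots,\chi_N\in IRP(B_1)$ forming an $r$-reparameterization of $(g,h)$. In particular, each of $\chi_j$, $g\circ\chi_j$, and $h\circ\chi_j$ has all derivatives of order at most $r$ bounded in modulus by $1$; some of the $\chi_j$ may be constant functions corresponding to the finitely many cut points where the reparameterization changes, and they will contribute parameterizations of the vertical slits in the cell.

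Second, for each $j$, I define $\Phi_j\colon (0,1)^2\to (g,h)_{(a,b)}$ by
\[
\Phi_j(s,t) = \langle \chi_j(s),\, (g\circ\chi_j)(s) + t\bigl((h\circ\chi_j)(s)-(g\circ\chi_j)(s)\bigr)\rangle.
\]
These maps lie in $IRP(B_2)$ for some $B_2$ effective in $B$ and $r$, and together their images cover the entire cell. Since $\Phi_j$ is affine in $t$ and its coefficients are built from derivatives of $\chi_j$, $g\circ\chi_j$, and $h\circ\chi_j$ (all bounded in modulus by $1$), a direct computation gives that every partial derivative of $\Phi_j$ of order at most $r$ is bounded in modulus by $3$ (independently of $r$, thanks to the affinity in $t$).

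Finally, I would apply the standard linear rescaling argument (as used at the end of the proof of Proposition \ref{prop:curvescells}) to reduce the derivative bound from $3$ to $1$: subdivide $(0,1)^2$ via the $9$ substitutions $\lambda_{i,k}(s,t)=\langle(s+i)/3,(t+k)/3\rangle$ for $i,k\in\{0,1,2\}$, and include the appropriate boundary maps (constant in one or both coordinates, suitably rescaled) to cover the lower-dimensional slices of $\mathrm{Im}(\Phi_j)$ omitted by this grid subdivision. The resulting collection is an $r$-parameterization of $(g,h)_{(a,b)}$ contained in $IRP(B')$ with cardinality bounded effectively in $B$ and $r$. The main obstacle is really just the derivative estimate for $\Phi_j$ in the second step; the first step is a direct application of the machinery developed in Section \ref{sec:curves}, while the final rescaling follows a pattern that is by now standard in the paper.
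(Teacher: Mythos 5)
Your proposal is correct and follows essentially the same route as the paper: it applies the special case of Proposition \ref{prop:curvescells} to the map $\langle g,h\rangle\colon(a,b)\to(0,1)^2$ and then maps $(0,1)^2$ onto the cell by affine interpolation in the second variable between $g\circ\chi_j$ and $h\circ\chi_j$. The only difference is that you spell out the final linear rescaling needed to bring the (mixed) derivative bound down from a small absolute constant to $1$, a step the paper leaves implicit in its ``clearly there exists $B'$'' remark.
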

\begin{proof}
The proof corresponds to part of the proof of (II)$_{1}$ in \cite{PilWil-06}, Section 5. Applying the special case of Proposition \ref{prop:curvescells} (mentioned in Remark \ref{rmk:curves}) to the map $\langle g,h \rangle \colon (a,b) \to (0,1)^{2}$, we obtain a positive real number $B''$ which is bounded effectively in $B$ and $r$, and an $r$-reparameterization $\mc{S}'$ of $\langle g, h \rangle$ such that $\mc{S}' \sset IRP(B'')$ and $\#\mc{S}' \leq B''$. For each $\phi \in \mc{S}'$, define $\psi_{\circ\phi} \colon (0,1)^{2} \to (0,1)^{2}$ by
\[\psi_{\circ\phi}(x,y):=\langle \phi(x), (1-y)(g\circ \phi)(x) + y(h\circ \phi)(x)\rangle.\]
Then there clearly exists a positive real number $B'$, bounded effectively in $B$ and $r$, such that the set $\mc{S}:=\{\psi_{\circ\phi} \; | \; \phi \in \mc{S'}\}$ is an $r$-parameterization of $(g,h)_{(a,b)}$, with $\mc{S} \sset IRP(B')$ and $\#\mc{S} \leq B'$, as required.
\end{proof}

We now come to the proof of effective reparameterization for functions of two variables which are implicitly defined from restricted Pfaffian functions.
\begin{theorem} \label{thm:effrepara_2}
Let $n$, $r$ be non-negative integers and let $B$ be a positive real number. Suppose that $F\colon(0,1)^2\to (0,1)^n$ lies in the class $IRP(B)$. There exist a positive real number $B'$, which is bounded effectively in $B$, $r$ and $n$, and an $r$-reparameterization $\mathcal{S}$ of $F$ such that $\mc{S} \sset IRP(B')$ and the cardinality of $\mc{S}$ is bounded by $B'$.
\end{theorem}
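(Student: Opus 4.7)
The plan is to follow the Pila--Wilkie strategy of \cite{PilWil-06} for passing from one-dimensional to two-dimensional reparameterization, using Proposition~\ref{prop:curvescells} to handle $x$-derivatives uniformly in $y$, and Lemma~\ref{lem:PW4.4} to handle the remaining derivatives with respect to $y$. The core technical device, corresponding to the analogous step in \cite{PilWil-06}, is to bundle $\phi_{i,j}$ together with $F\circ\phi_{i,j}$ inside a single auxiliary map before invoking Lemma~\ref{lem:PW4.4}, so that a common choice of $y$-reparameterization simultaneously tames both the reparameterization map and the composition $F\circ\Psi$.

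First, I would apply Proposition~\ref{prop:curvescells} (via Remark~\ref{rmk:curves}, with $f$ identically equal to $1$ and $C=(0,1)^{2}$) to $F$. This produces a positive real number $B_{1}$, effective in $B$, $n$ and $r$, a partition $0=\xi_{0}<\xi_{1}<\cdots<\xi_{N+1}=1$ with $N\leq B_{1}$, and functions $\phi_{i,j}\colon(0,1)\times(\xi_{i},\xi_{i+1})\to(0,1)$ in $IRP(B_{1})$ (with $j=0,\ldots,M_{i}$, $M_{i}\leq B_{1}$) such that, for every $y\in(\xi_{i},\xi_{i+1})$, the functions $\phi_{i,j}(\cdot,y)$ form an $r$-reparameterization of $F(\cdot,y)$.

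The second step is, for each $i$ and $j$, to form the auxiliary map
\[
\widetilde{G}_{i,j}\colon (0,1)\times(\xi_{i},\xi_{i+1})\to (0,1)^{n+1},\qquad
\widetilde{G}_{i,j}(x,y):=\langle \phi_{i,j}(x,y),\, F(\phi_{i,j}(x,y),y)\rangle.
\]
This map lies in $IRP(B_{2})$ for a $B_{2}$ effective in $B$, $n$ and $r$, and, since for each fixed $y$ the slice $\phi_{i,j}(\cdot,y)$ is simultaneously part of an $r$-parameterization of $(0,1)$ and part of an $r$-reparameterization of $F(\cdot,y)$, the $x$-partial derivatives of $\widetilde{G}_{i,j}$ of orders $0,\ldots,r$ are all bounded by $1$ in modulus. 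Lemma~\ref{lem:PW4.4} applied to $\widetilde{G}_{i,j}$ with $k=r$ then yields a positive real number $B_{3}$ effective in $B$, $n$, $r$, a finite set $X_{i,j}\subseteq(\xi_{i},\xi_{i+1})$ of cardinality at most $B_{3}$, and an $r$-parameterization $\mathcal{S}_{i,j}\subseteq IRP(B_{3})$ (again of cardinality at most $B_{3}$) of $(\xi_{i},\xi_{i+1})\setminus X_{i,j}$, such that for each $\psi\in\mathcal{S}_{i,j}$ every partial derivative of $(\widetilde{G}_{i,j})_{\psi}$ of total order at most $r$ is bounded by $1$.

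For each such triple $(i,j,\psi)$, define $\Psi_{i,j,\psi}\colon(0,1)^{2}\to(0,1)^{2}$ by
\[
\Psi_{i,j,\psi}(x,y):=\langle \phi_{i,j}(x,\psi(y)),\, \psi(y)\rangle.
\]
The first component of $\Psi_{i,j,\psi}$ is precisely the first component of $(\widetilde{G}_{i,j})_{\psi}$, and the second depends only on $y$ through $\psi$ (which is part of an $r$-parameterization); hence all partial derivatives of $\Psi_{i,j,\psi}$ of total order at most $r$ are bounded by $1$. Similarly, $F\circ\Psi_{i,j,\psi}$ coincides with the last $n$ components of $(\widetilde{G}_{i,j})_{\psi}$, so its partial derivatives up to order $r$ are bounded by $1$. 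Moreover, $\Psi_{i,j,\psi}$ lies in $IRP(B_{4})$ for some $B_{4}$ effective in $B$, $n$ and $r$.

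Finally, the images of the $\Psi_{i,j,\psi}$ cover $(0,1)^{2}$ except possibly for horizontal lines $(0,1)\times\{c\}$ with $c$ ranging over the finite set $Y:=\{\xi_{1},\ldots,\xi_{N}\}\cup\bigcup_{i,j}X_{i,j}$, whose cardinality is bounded effectively in $B$, $n$ and $r$. For each $c\in Y$ I would apply the single-function case of Proposition~\ref{prop:curvescells} (see Remark~\ref{rmk:curves}) to $F(\cdot,c)\colon(0,1)\to(0,1)^{n}$, obtaining an $r$-reparameterization $\{\alpha_{c,k}\}_{k}$ in $IRP(B_{5})$ for a $B_{5}$ effective in $B$, $n$ and $r$; the maps $\Xi_{c,k}(x,y):=\langle\alpha_{c,k}(x),c\rangle$ and a handful of constant maps for the residual isolated points then cover the missing lines, with all relevant derivatives bounded by $1$. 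Taking $\mathcal{S}$ to be the union of the $\Psi_{i,j,\psi}$, the $\Xi_{c,k}$, and the constant maps yields the required $r$-reparameterization of $F$, with $B':=\max\{B_{4},B_{5}\}$ (after a mild adjustment) effective in $B$, $n$ and $r$, and $\#\mathcal{S}$ similarly bounded. The main obstacle throughout is precisely the simultaneous control required on the derivatives of $\Psi$ and of $F\circ\Psi$; the bundling trick in the definition of $\widetilde{G}_{i,j}$ is what makes a single application of Lemma~\ref{lem:PW4.4} suffice for both.
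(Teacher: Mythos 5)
Your proposal is correct and follows essentially the same route as the paper's proof: Proposition \ref{prop:curvescells} for the $x$-direction, then bundling $\phi_{i,j}$ with $F\circ\phi_{i,j}$ and applying Lemma \ref{lem:PW4.4} with $k=r$ to tame the $y$-derivatives, and finally patching the exceptional horizontal lines with one-variable reparameterizations via Remark \ref{rmk:curves}. The only (harmless) difference is that the paper applies Lemma \ref{lem:PW4.4} once per strip to a single map ${}^*F$ that bundles all indices $j$ simultaneously, whereas you apply it separately to each $\widetilde{G}_{i,j}$ and then absorb the resulting extra exceptional lines into the finite set covered at the end.
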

\begin{proof}
Here the proof corresponds to that of the case (I)$_{1+1}$ in \cite{PilWil-06}, Section 5. We begin by applying the constant function version of Proposition \ref{prop:curvescells} (see Remark \ref{rmk:curves}) to the map $F$. This gives us a positive real number $B'_1$ which is effective in $B$, $r$ and $n$, non-negative integers $N, M_{0},\ldots,M_{N}$ bounded by $B'_1$, real numbers $\xi_{0}, \ldots, \xi_{N+1}$ with $0=\xi_{0}<\xi_{1}<\ldots<\xi_{N}<\xi_{N+1}=1$ and functions $\phi_{i,j}\colon C_i\to (0,1)$ lying in the class $IRP(B'_1)$, where $C_{i}=(0,1)\times (\xi_{i},\xi_{i+1})$ for $i=0,\ldots, N$, $j=0,\ldots,M_{i}$, such that, for each $i\in \{0,\ldots, N\}$ and every $y\in (\xi_{i},\xi_{i+1})$, the functions $\phi_{i,0}(\cdot,y),\ldots,\phi_{i,M_{i}}(\cdot,y)$ form an $r$-reparameterization of $F(\cdot,y)$.

Without loss of generality, let us fix $i\in \{0,\ldots, N\}$. We will now drop the index $i$ for clarity, and so we relabel the domain $C_{i}$ as $C=(0,1)\times (\nu,\xi)$; we then have $\phi_0,\ldots,\phi_M:C\to (0,1)$ such that, for all $y\in (\nu,\xi)$, the functions $\phi_0(\cdot,y),\ldots,\phi_M(\cdot,y)$ form an $r$-reparameterization of $F(\cdot,y)$. Define
\begin{eqnarray*}
{}^*F:C &\to& (0,1)^{2(M+1)}\\
\langle x,y\rangle& \mapsto & \langle\phi_0(x,y),\ldots,\phi_M(x,y),F(\phi_0(x,y),y),\ldots,F(\phi_M(x,y),y)\rangle.
\end{eqnarray*}
Then there is a positive real number $B'_{2}$ such that ${}^*F$ lies in the class $IRP(B'_2)$. Moreover ${}^*F$ satisfies the hypotheses of Lemma \ref{lem:PW4.4}. Therefore, applying this lemma with $k=r$, we obtain a positive real number $B_r$, a finite set $X_{r}\sset (\nu,\xi)$ and a set of functions $\mc{S}_{r}$ with the properties stated therein. Now consider the set of maps
\[\mc{S}'_{C}:=\{ \langle(\phi_{j})_{\psi},\psi\rangle \;|\; j = 0,\ldots,M \textrm{ and } \psi \in \mc{S}_{r} \}.\]
This set covers all of $C$ apart from at most the union of finitely many lines $(0,1)\times \{a\}$, for $a \in X_{r}$.

Now, for $y \in (0,1)$, define $\lmb_{y} \colon (0,1) \to (0,1)^{2}$ to be $\lmb_{y}(x) = \langle x,y\rangle$. The special case of Proposition \ref{prop:curvescells} (as mentioned in Remark \ref{rmk:curves}) will give a positive real number $B'_{3}$ bounded effectively in $B$ and $r$, and, for each $a \in X_{r}$, an $r$-reparameterization $\mc{T}_{a}$ of $F\circ \lmb_{a} \colon (0,1) \to (0,1)$ such that $\mc{T}_{a} \sset IRP(B'_3)$ and $\#\mc{T}_{a} \leq B'_{3}$, for each $a \in X_{r}$. Setting
\[ \mc{S}_{C}:= \mc{S}'_{C} \un \{\lmb_{a} \circ \tau \colon (0,1) \to (0,1)^{2} \; | \; a \in X_{r} \textrm{ and } \tau \in \mc{T}_{a}\}\]
provides an $r$-reparameterization of the restricted map $F\rst{C}$ that satisfies the required conditions.

We therefore finish by again applying the special case of Proposition \ref{prop:curvescells}, this time to obtain a positive real number $B'_4$ bounded effectively in $B$ and $r$, and $r$-reparameterizations $\mc{T}_{i}$ of $F\circ\lmb_{\xi_{i}} \colon (0,1) \to (0,1)$, for each $i = 1,\ldots, N$, such that $\mc{T}_{i} \sset IRP(B'_4)$ and $\#\mc{T}_{i} \leq B'_4$, for each $i = 1,\ldots, N$. Our desired $r$-reparameterization is then

\[ \mc{S}:= \Un^{N}_{i=0}\mc{S}_{C_{i}} \un \Un^{N}_{i=1}\mc{T}_{i}.\qedhere\]

\end{proof}

Finally we come to the proof of our main effective parameterization result, an effective parameterization theorem for surfaces implicitly defined from restricted Pfaffian functions. This is a straightforward corollary of the previous theorem.
\begin{theorem} \label{thm:effpara_2}
Let $n$, $r$ be non-negative integers, let $a$, $b$ be real numbers such that $0\leq a < b \leq 1$ and let $B$ be a positive real number. Suppose that $g, h \colon (a,b) \to (0,1)$ are functions lying in the class $IRP(B)$ with $g<h$. Suppose further that $F\colon (g,h)_{(a,b)}\to (0,1)^n$ lies in the class $IRP(B)$. There exists a positive real number $B'$ bounded effectively in $B$, $r$ and $n$, and an $r$-parameterization $\mathcal{S}$ of graph$(F)$ such that $\mathcal{S} \sset IRP(B')$ and the cardinality of $\mathcal{S}$ is bounded by $B'$.
\end{theorem}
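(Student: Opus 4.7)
The plan is to derive Theorem \ref{thm:effpara_2} by composing the two preceding parameterization results, namely Theorem \ref{thm:effpara_cell2} (effective $r$-parameterization of cells) and Theorem \ref{thm:effrepara_2} (effective $r$-reparameterization of two-variable $IRP$ maps on $(0,1)^{2}$). First I would apply Theorem \ref{thm:effpara_cell2} to $g$ and $h$ to obtain a positive real number $B_{1}$, bounded effectively in $B$ and $r$, together with an $r$-parameterization $\mathcal{T} \subseteq IRP(B_{1})$ of $(g,h)_{(a,b)}$ of cardinality at most $B_{1}$. For each $\tau \in \mathcal{T}$, effective closure of $IRP$ under composition yields a positive real $B_{2}$, effective in $B_{1}$ and $n$, such that $F \circ \tau \colon (0,1)^{2} \to (0,1)^{n}$ lies in $IRP(B_{2})$. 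Applying Theorem \ref{thm:effrepara_2} to each such $F \circ \tau$ then produces a positive real $B_{3}$, effective in $B_{2}$, $r$ and $n$, and an $r$-reparameterization $\mathcal{S}_{\tau} \subseteq IRP(B_{3})$ of $F \circ \tau$ of cardinality at most $B_{3}$. For each $\tau \in \mathcal{T}$ and $\sigma \in \mathcal{S}_{\tau}$ I would then form the graph map $\phi_{\tau,\sigma} \colon (0,1)^{2} \to \bbR^{2+n}$ defined by $\phi_{\tau,\sigma}(u) = \langle (\tau \circ \sigma)(u),\, F((\tau \circ \sigma)(u)) \rangle$. By closure of $IRP$ under composition these lie in $IRP(B_{4})$ for some effective $B_{4}$, and their images jointly cover $\textrm{graph}(F)$ in full, because the images of $\tau$ cover $(g,h)_{(a,b)}$ and, for each $\tau$, the images of the $\sigma \in \mathcal{S}_{\tau}$ cover $(0,1)^{2}$.

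The substantive remaining work is to verify and enforce the derivative bounds. The last $n$ components of $\phi_{\tau,\sigma}$ form the map $(F\circ\tau) \circ \sigma$, whose partial derivatives up to order $r$ are bounded by $1$ in modulus by the reparameterization property of $\mathcal{S}_{\tau}$. The first two components $\tau \circ \sigma$ are a composition of two $C^{r}$ maps each with derivatives bounded by $1$; by the Fa\`a di Bruno formula their partial derivatives up to order $r$ are bounded by some constant $C_{r}$ depending only on, and effectively computable from, $r$. I would then perform the standard linear rescaling on each $\phi_{\tau,\sigma}$, exactly as in the final step of the proof of Proposition \ref{prop:curvescells}: set $D := \lceil C_{r} \rceil$ and replace each $\phi_{\tau,\sigma}$ by the $D^{2}$ maps $(u_{1},u_{2}) \mapsto \phi_{\tau,\sigma}((u_{1}+k_{1})/D,(u_{2}+k_{2})/D)$ for $k_{1},k_{2} \in \{0,\ldots,D-1\}$. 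This scales every order-$q$ derivative (for $q \geq 1$) by $1/D^{q}$ and hence brings all derivatives of both halves into $[-1,1]$, while membership in $IRP$ is preserved with effectively bounded complexity.

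The only remaining loose end is full coverage: the rescaling omits the images, under the original $\phi_{\tau,\sigma}$, of the grid lines $\{u_{i} = k/D\} \sset (0,1)^{2}$, which together form finitely many $1$-dimensional arcs in $\textrm{graph}(F)$ (in effectively bounded number). I would parameterize each such arc by applying the constant-function form of Proposition \ref{prop:curvescells} (as per Remark \ref{rmk:curves}) to the corresponding one-variable restriction, obtaining maps $(0,1) \to \bbR^{2+n}$ with derivatives bounded by $1$, and then extending each to a map $(0,1)^{2} \to \bbR^{2+n}$ by taking it constant in the second variable, which preserves the bounds. Adding these to the collection of rescaled graph maps yields the required $r$-parameterization $\mathcal{S} \sset IRP(B')$ of $\textrm{graph}(F)$ with $\# \mathcal{S} \leq B'$ for a $B'$ effective in $B$, $r$ and $n$. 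There is no genuine conceptual obstacle here; the only work beyond chaining the two preceding results together is the standard rescaling step and the bookkeeping needed to track effectivity through the various compositions and to handle the finitely many leftover boundary arcs.
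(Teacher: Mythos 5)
Your proposal is correct and follows essentially the same route as the paper's proof: apply Theorem \ref{thm:effpara_cell2} to the cell $(g,h)_{(a,b)}$, apply Theorem \ref{thm:effrepara_2} to each $F\circ\tau$, and take the graph maps $\langle \tau\circ\sigma, F\circ\tau\circ\sigma\rangle$. The only difference is that you explicitly repair the point that $\tau\circ\sigma$ a priori has derivatives up to order $r$ bounded only by an effective constant $C_r$ rather than by $1$ --- via the $D^{2}$-fold linear rescaling and the one-variable treatment of the leftover grid lines --- whereas the paper's write-up passes over this final verification (it could equally be handled by applying Theorem \ref{thm:effrepara_2} to the map $\langle \tau, F\circ\tau\rangle$ rather than to $F\circ\tau$ alone); your patch is sound and keeps all the bounds effective.
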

\begin{proof}
In this instance, the proof corresponds to that of case (II)$_{2}$ in \cite{PilWil-06}, Section 5. Set $r \geq 1$. Apply Theorem \ref{thm:effpara_cell2} to $(g,h)_{(a,b)}$ to obtain a positive real number $B_1$ bounded effectively in $B$ and $r$, and an $r$-parameterization $\mc{S}_1$ of $(g,h)_{(a,b)}$ such that $\mc{S}_1 \sset IRP(B_1)$ and $\#\mc{S}_1 \leq B_1$. Now apply Theorem \ref{thm:effrepara_2} to obtain a positive real number $B_2$ bounded effectively in $B$, $r$ and $n$, and, for each $\phi \in \mc{S}_1$, an $r$-reparameterization $\mc{T}_{\phi}$ of $F\circ \phi \colon (0,1)^2 \to (0,1)^n$, such that $\mc{T}_{\phi} \sset IRP(B_2)$ and $\#\mc{T}_{\phi}\leq B_2$, for each $\phi \in \mc{S}_1$. Then we can define $\mc{S}$ to be $\{ \langle (\ph\circ\ps)(x,y), (F\circ\ph\circ\ps)(x,y)\rangle \; | \; \ps \in \mc{T}_{\ph}, \ph\in\mc{S}_1\}$. Clearly there exists a positive real number $B'$ with the required properties.
\end{proof}

\section{Counting results}
\label{sec:counting}
In this section we put our parameterization results to work and prove our counting results. These results make use of the following proposition from \cite{PilWil-06}. As there, an algebraic hypersurface of degree $d$ is the zero set of a non-zero polynomial of degree $d$, and, for a non-negative integer $n$, a positive real number $T$ and a set $Y \sset \bbR^{n}$, the set of rational points in $\bbQ^{n}$ lying on $Y$ of height at most $T$ is denoted $Y(\bbQ,T)$.

\begin{proposition}[\cite{PilWil-06}, Proposition 6.1] \label{prop:PWmain}
Let $k, n$ be non-negative integers with $k < n$. For each positive integer $d$, there exists a non-negative integer $r = r(k,n,d)$ and positive constants $\e(k,n,d), C(k,n,d)$ with the following property. For any $C^{r}$-map $\ph \colon (0,1)^{k} \to \bbR^{n}$ with $\md{\ph^{(\a)}(x)} \leq 1$, for all $x \in (0,1)^{k}$ and all $\a \in \bbN^{k}$ with $\md{\a} \leq r$, and for all $T\geq 1$, the set Im$(\ph)(\bbQ,T)$ is contained in the union of at most $C(k,n,d)T^{\e(k,n,d)}$ algebraic hypersurfaces of degree at most $d$. Furthermore, $\e(k,n,d) \to 0$ as $d \to \infty$.

\end{proposition}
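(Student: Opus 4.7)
The approach is the classical determinant method of Bombieri--Pila, extended by Pila to higher-dimensional images. Fix $d$ and set $D := \binom{n+d}{d}$, the dimension of the space of real polynomials of degree at most $d$ in $n$ variables. For any $D$ points $q_{1},\ldots,q_{D} \in \bbR^{n}$, linear dependence of the ``monomial vectors'' $(q_{i}^{\alpha})_{|\alpha|\leq d}$ in $\bbR^{D}$ is equivalent to the $q_{i}$ lying on a common algebraic hypersurface of degree at most $d$. The strategy is to partition $(0,1)^{k}$ into small subcubes so that, on each subcube, any $D$ rational points of the image have linearly dependent monomial vectors, hence lie on a single degree-$d$ hypersurface.

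Partition $(0,1)^{k}$ into a regular grid of $\de^{-k}$ subcubes of side $\de$, where $\de$ is a small negative power of $T$ to be chosen. On each subcube $Q$, Taylor's theorem together with the hypothesis $\norm{\ph^{(\alpha)}}\leq 1$ for $|\alpha|\leq r$ produces a polynomial $P_{Q}\colon \bbR^{k}\to \bbR^{n}$ of degree at most $r-1$ with $\norm{\ph(x) - P_{Q}(x)}\leq c_{1}\de^{r}$ on $Q$. Pick $D$ preimages $x_{1},\ldots,x_{D} \in Q$ of distinct rational image points of height at most $T$ and form the $D\times D$ matrix $M := (\ph(x_{i})^{\alpha_{j}})_{i,j}$, with $\alpha_{1},\ldots,\alpha_{D}$ enumerating the multi-indices of length at most $d$. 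Since each $\ph(x_{i})$ is rational of height at most $T$, its coordinates have denominators bounded by $T$, so each row of $M$ can be scaled by at most $T^{nd}$ to clear denominators; hence, if $\det M \ne 0$, then $|\det M|\geq T^{-ndD}$.

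For the matching upper bound, replace each entry $\ph(x_{i})^{\alpha_{j}}$ by $P_{Q}(x_{i})^{\alpha_{j}}$, incurring an error of at most $c_{2}\de^{r}$ per entry, which perturbs the determinant by at most $c_{2}'\de^{r}$. The resulting matrix has entries which are polynomials in the coordinates of $x_{i}$ of degree at most $d(r-1)$, and a generalised Vandermonde (alternant) estimate on a cube of side $\de$ bounds its determinant by $c_{3}\de^{\sigma}$ for an explicit exponent $\sigma = \sigma(k,n,d)$ which grows strictly faster than $ndD$ as $d \to \infty$. Setting $\de := T^{-\eta}$ with $\eta := ndD/\sigma$ balances the lower bound $T^{-ndD}$ against the leading term $c_{3}\de^{\sigma}$, and then choosing $r$ larger than $\sigma/\eta$ ensures the polynomial-error term $c_{2}'\de^{r}$ is also dominated. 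The upper bound then undercuts the lower bound, forcing $\det M = 0$.

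Consequently, all rational image points over any fixed subcube $Q$ lie on a single hypersurface of degree $d$, so $\textrm{Im}(\ph)(\bbQ,T)$ is covered by at most $\de^{-k} = T^{\eta k}$ such hypersurfaces. Taking $\epsilon(k,n,d) := \eta k = kndD/\sigma(k,n,d)$, the fact that $\sigma$ grows faster than $D$ in $d$ yields $\epsilon(k,n,d) \to 0$ as $d \to \infty$, as required. The principal technical obstacle is the sharp alternant estimate together with a careful ordering of the monomial basis: one must verify that the vanishing order at the centre of the cube accumulates sufficiently rapidly across the basis so that $\sigma$ beats $ndD$, and this in turn dictates the choice of $r = r(k,n,d)$. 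Everything else reduces to bookkeeping.
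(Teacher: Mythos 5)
First, note that the paper does not actually prove this proposition: it is quoted verbatim from \cite{PilWil-06}, Proposition 6.1, and the only thing the authors add is the remark that $r(k,n,d)$, $\epsilon(k,n,d)$ and $C(k,n,d)$ are effectively computable from $k$, $n$, $d$, as follows from the proof in the cited sources (specifically \cite{Pil-04}, 4.1--4.2). So your proposal is a reconstruction of the proof of the cited result, and its architecture does match that proof (the Bombieri--Pila determinant method): subdivide $(0,1)^{k}$ into roughly $\delta^{-k}$ subcubes, clear denominators to get $\md{\det M}\geq T^{-ndD}$ whenever $\det M\neq 0$, bound $\md{\det M}$ above by $c\,\delta^{\sigma}$ via Taylor approximation and the alternant estimate, balance $\delta$ against $T$, and conclude that the rational points over each subcube lie on a single hypersurface of degree $d$ (the passage from ``any $D$ points'' to ``all points over $Q$'' needs the standard rank argument, but that is routine).

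The genuine gap is the one you yourself flag and then set aside: the claim that $\sigma(k,n,d)$ grows strictly faster than $ndD$ as $d\to\infty$, equivalently that $\epsilon(k,n,d)=kndD/\sigma\to 0$. This is not bookkeeping; it is the entire content of the proposition, and it is the only place where the hypothesis $k<n$ (which your argument never invokes) can enter --- for $k\geq n$ the conclusion is false, so any proof omitting this step cannot be complete. Concretely, $\sigma$ is the sum of the degrees of the first $D$ monomials in $k$ variables ordered by degree, so $\sigma\asymp c_{k}D^{1+1/k}$, while $D=\binom{n+d}{n}\asymp d^{n}/n!$; hence $\epsilon\asymp c(k,n)\,d^{\,1-n/k}$, which tends to $0$ precisely because $n>k$, and this computation also pins down the admissible $r$ (of the order of the largest monomial degree used, so that either the Taylor error or the truncation is dominated) and the constant $C$. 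Carrying out this estimate explicitly is exactly what makes the constants effective, which is the feature the present paper needs. Two minor further points: your choice ``$r$ larger than $\sigma/\eta$'' is more than is needed ($r\geq\sigma$ already suffices, since $\delta^{r}=T^{-\eta r}$), and the balancing $\delta=T^{-\eta}$ with $\eta=ndD/\sigma$ needs a small margin (or attention to the constants $c_{2}',c_{3}$) to turn ``upper bound equals lower bound'' into a strict contradiction; both are harmless, unlike the missing growth estimate for $\sigma$.
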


Importantly for us, it is know that the  $r(k,n,d)$, $\e(k,n,d)$ and $C(k,n,d)$ in this statement can be effectively computed from $k,n$ and $d$. This follows from the proof of \ref{prop:PWmain}; see in particular \cite[4.1, 4.2]{Pil-04}. We will use this effectivity below.

In order to prove our main results in this section, we will use the following proposition, a particular case of Proposition 5.3 from \cite{JonTho-12}.

\begin{proposition}[\cite{JonTho-12}, Proposition 5.3]\label{prop:JT12_zeroest}
Let $B$ be a positive real number and suppose that $f\colon(0,1)^2\to (0,1)$ is a function lying in the class $IP(B)$. Let $X$ be the graph of $f$. There exist positive integers $N=N(B)$, $\g=\g(B)$ and $c_{1}$, and a polynomial $Q \colon \bbR \to \bbR$ over $\bbR$ of degree $N$ with coefficients depending only on $B$, such that, for all $T\geq 1$ and for all positive integers $d$, if $P\colon \bbR^{3} \to \bbR$ is a polynomial of degree $d$, then
\[
\# (\tr{(X \cap V(P))})(\bbQ,T) \leq c_{1}Q(d)(\log T)^{\g}.
\]
\end{proposition}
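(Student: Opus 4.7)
The plan is to reduce the zero-estimate for the graph of $f$ to a counting problem on one-variable implicitly Pfaffian curves. First I would consider the auxiliary function $g\colon (0,1)^{2} \to \bbR$ defined by
\[
g(x,y) = P(x,y,f(x,y)).
\]
Since $f$ lies in $IP(B)$ and $P$ is a polynomial of degree $d$, closure properties of $IP$ under polynomial operations yield $g \in IP(B^{*})$ for some $B^{*}$ effectively bounded in $B$ and $d$; the key point is that the Pfaffian chain needed to witness membership is the same as that of $f$, so only the degrees of the defining polynomials grow (polynomially in $d$).

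Assume first that $g \not\equiv 0$, so that $X \cap V(P)$ projects into $V(g) \cup (\text{relevant boundary})$, a set of dimension at most one. Applying Khovanskii's Theorem \ref{thm:Khovanskii} to the system witnessing $g \in IP(B^{*})$ (augmented by $g = 0$ and appropriate Jacobian-vanishing equations for stratification) bounds the number of connected components of $V(g)$ by a polynomial in $d$ whose degree $N$ depends only on $B$, with coefficients also depending only on $B$; this gives the polynomial $Q(d)$ in the statement. Then, using the decomposition techniques of Lemma \ref{lem:stratadecomp} and Proposition \ref{prop:effmon}, I would decompose $V(g)$ into at most $c_2 Q(d)$ graphs of one-variable functions $\psi_{\ell}\colon I_{\ell}\to (0,1)$ that are implicitly defined from Pfaffian functions, where the implicit definition of each $\psi_{\ell}$ uses the common chain of $f$ and has \emph{complexity bounded only in terms of $B$}, independent of $d$ (only the number of pieces depends on $d$).

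Each curve $\Gamma_{\ell} = \{(x,\psi_{\ell}(x),f(x,\psi_{\ell}(x))) : x\in I_{\ell}\}$ is the graph of a one-variable implicitly Pfaffian map of complexity bounded effectively in $B$. I would then invoke the known polylogarithmic counting bound for the transcendental part of such Pfaffian curves (following the Wilkie-type counting arguments for Pfaffian functions of one variable, based on iterated Khovanskii--Rolle): there exist constants $c_{1}$ and $\gamma = \gamma(B)$ such that
\[
\#(\tr{\Gamma_{\ell}})(\bbQ, T) \leq c_{1}(\log T)^{\gamma}.
\]
Summing over the at most $Q(d)$ pieces delivers the bound $c_{1}Q(d)(\log T)^{\gamma}$. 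The residual case $g \equiv 0$ means $X \subseteq V(P)$, so $\tr{(X \cap V(P))} = \tr{X}$ and the bound holds trivially from the (non-polynomial in $d$) estimate for $\tr{X}$; one simply absorbs this into $Q$ by enlarging it if necessary.

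The main obstacle, and the step requiring the most care, is ensuring that the decomposition of $V(g)$ yields curves whose implicit complexity depends only on $B$ while only the \emph{number} of curves depends on $d$. This is where the structure of the Pfaffian chain is essential: augmenting the chain of $f$ by the polynomial defining relation for $g$ does not enlarge the order of the chain, only the degree of the defining polynomial, so Khovanskii-style bounds contribute a polynomial factor in $d$ to the component count while leaving the per-curve counting exponent $\gamma$ untouched. Once this separation of dependencies is established, the rest of the argument is a routine assembly of effective decomposition and one-variable Pfaffian counting.
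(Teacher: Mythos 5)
Note first that the paper does not actually prove Proposition \ref{prop:JT12_zeroest}: it is imported verbatim from \cite{JonTho-12} (Proposition 5.3 there), with only the remark that the constants in that proof can be extracted effectively. Your outline is broadly in the spirit of that source (slice the surface by $V(P)$, pass to the plane curve $V(g)$ with $g(x,y)=P(x,y,f(x,y))$, bound the number of pieces polynomially in $d$ via Khovanskii, and count rational points on each resulting implicitly defined Pfaffian curve polylogarithmically). But as written there are two genuine gaps, and they sit exactly at the points you yourself identify as the crux.

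First, your central claim that each $\psi_{\ell}$, and hence each curve $\Gamma_{\ell}$, has ``complexity bounded only in terms of $B$, independent of $d$'' is false under the definitions used here (Definitions \ref{def:Pfaffian} and \ref{def:IP}): complexity bounds the degree data as well as the order, and any implicit system for $\psi_{\ell}$ must incorporate the equation $g=0$ (and associated Jacobian conditions), whose degree grows with $d$. What is $d$-independent is only the chain and its order. Consequently the per-curve bound $c_{1}(\log T)^{\gamma}$ with $c_{1},\gamma$ depending only on $B$ cannot simply be quoted; one needs a curve-counting theorem in which the exponent $\gamma$ depends only on the $d$-independent part of the data while the degree enters at most polynomially into the multiplicative constant, and one must check that the decomposition of $V(g)$ respects this bookkeeping. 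That separation of dependencies is precisely the non-routine content of the cited proof, and in your proposal it is asserted (in an incorrect form) rather than established. Second, the degenerate case $g\equiv 0$ is not trivial: if $X\subseteq V(P)$ you propose to invoke ``the estimate for $\tr{X}$'' and absorb it into $Q$, but no bound of the shape $Q(d)(\log T)^{\gamma}$ is available for the transcendental part of a two-dimensional graph --- that is Wilkie-conjecture strength, and even this paper only obtains $cT^{\epsilon}$; moreover a $T$-dependent bound cannot be absorbed into a polynomial in $d$. The correct treatment is to show that containment of the graph of the analytic function $f$ in an algebraic surface forces $f$ to agree with semialgebraic branches away from an algebraic curve and an effectively finite (polynomial in $d$) exceptional set, so that $\tr{X}$ contains at most polynomially many points in $d$; this is elementary but is an argument, not an absorption. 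Two smaller points: the rational points of $X\cap V(P)$ lying over the vertical lines and isolated points omitted by your decomposition must be counted separately, and the decomposition tools of this paper (Lemma \ref{lem:stratadecomp}, Lemma \ref{lem:effzerosetdecomp}, Proposition \ref{prop:effmon}) are stated for $IRP$ data on restricted cells, so applying them to $IP$ data on $(0,1)^{2}$ requires a shrinking argument as in the proof of Theorem \ref{thm:main_surface_unres}.
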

In \cite{JonTho-12} we did not address effectivity for this statement but, upon inspecting the proof of this result, it is possible to see that the constants $N=N(B)$, $\g=\g(B)$ and $c_{1}$ may be found effectively, as well as an effective bound on the coefficients of the polynomial $Q$.

We now present our first counting result, for those surfaces implicitly defined from restricted Pfaffian functions whose base is an open cell lying inside the box $(0,1)^{2}$.

\begin{theorem} \label{thm:main_surface_res_inbox}
Let $a$, $b$ be real numbers such that $0\leq a < b \leq 1$ and let $B$ and $\e$ be positive real numbers. Suppose that $g, h \colon (a,b) \to (0,1)$ are functions lying in the class $IRP(B)$ with $g<h$. Suppose further that $F\colon (g,h)_{(a,b)}\to \bbR$ lies in the class $IRP(B)$. There exists a positive real number $c$, bounded effectively in $B$ and $\epsilon$, such that, for all $T\geq 1$,
\[
\#\text{graph}(F)^{\text{trans}} (\bbQ,T) \leq cT^{\epsilon}.
\]
\end{theorem}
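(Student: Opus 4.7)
The plan is to follow the standard Pila--Wilkie scheme, combining the effective surface parameterization of Theorem \ref{thm:effpara_2} with the algebraic-hypersurface covering from Proposition \ref{prop:PWmain} and the polylogarithmic zero estimate from Proposition \ref{prop:JT12_zeroest}. Fix $\epsilon>0$. Using that the constants $r(k,n,d)$, $\epsilon(k,n,d)$ and $C(k,n,d)$ of Proposition \ref{prop:PWmain} are effectively computable from $k$, $n$ and $d$, I first choose a positive integer $d$, depending effectively on $\epsilon$, so that $\epsilon(2,3,d)\le \epsilon/2$, and then set $r:=r(2,3,d)$. Next, apply Theorem \ref{thm:effpara_2} to $F$ with this value of $r$ (and $n=1$), obtaining a positive real $B'$, effective in $B$ and $\epsilon$, together with an $r$-parameterization $\mathcal{S}$ of $\text{graph}(F)$ with $\mathcal{S}\subseteq IRP(B')$ and $\#\mathcal{S}\le B'$.

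For each $\phi\in\mathcal{S}$, since $\phi$ satisfies the derivative bounds required by Proposition \ref{prop:PWmain}, the rational points of height at most $T$ lying on $\textrm{Im}(\phi)$ are covered by at most $C(2,3,d)T^{\epsilon/2}$ algebraic hypersurfaces of degree at most $d$. For each such hypersurface $V(P)$, an elementary set-theoretic check gives $\text{graph}(F)^{\textrm{trans}}\cap V(P)\subseteq (\text{graph}(F)\cap V(P))^{\textrm{trans}}$, since any connected, infinite, semi-algebraic subset of $\text{graph}(F)\cap V(P)$ is also one of $\text{graph}(F)$. As $F\in IRP(B)\subseteq IP(B)$, I would then apply Proposition \ref{prop:JT12_zeroest} to $F$ and $P$ to bound the number of rational points of height at most $T$ on $(\text{graph}(F)\cap V(P))^{\textrm{trans}}$ by $c_1Q(d)(\log T)^{\gamma}$, where $c_1$, $\gamma$ and the coefficients of $Q$ depend effectively on $B$.

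Putting these estimates together,
\[
\#\text{graph}(F)^{\textrm{trans}}(\bbQ,T)\;\le\;B'\cdot C(2,3,d)T^{\epsilon/2}\cdot c_1Q(d)(\log T)^{\gamma}.
\]
The elementary bound $(\log T)^{\gamma}\le c_{\gamma,\epsilon}T^{\epsilon/2}$ for an absorption constant $c_{\gamma,\epsilon}$ effective in $\gamma$ (hence in $B$) and in $\epsilon$, valid for all $T\ge 1$, then delivers the required $cT^{\epsilon}$ bound with $c$ effective in $B$ and $\epsilon$. The main obstacle I anticipate is that Proposition \ref{prop:JT12_zeroest} is quoted for functions $f\colon (0,1)^{2}\to(0,1)$ in $IP(B)$, whereas our $F$ is defined on the cell $(g,h)_{(a,b)}$ with values in $\bbR$; I expect that the proof of Proposition 5.3 of \cite{JonTho-12} adapts without essential change to the case of $IP(B)$ functions defined on an arbitrary cell in $(0,1)^{2}$ (the restriction of the range to $(0,1)$ being inessential, since height-at-most-$T$ points lie automatically in a bounded box), and that effectivity of the constants is preserved under this adaptation.
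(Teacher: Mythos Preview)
Your overall strategy matches the paper's, but there is a genuine gap at the point where you invoke Theorem~\ref{thm:effpara_2}. That theorem is stated for maps $F\colon (g,h)_{(a,b)}\to (0,1)^n$, not for maps into $\bbR$. This is not a cosmetic restriction: the very definition of $r$-parameterization (Definition~\ref{def:para}) requires $\norm{\phi^{(\alpha)}}\le 1$ for $|\alpha|\le r$, including $\alpha=0$, so the image of each parameterizing map must lie in $[-1,1]^{3}$. If $F$ is unbounded, $\text{graph}(F)$ has no $r$-parameterization at all, and even if $F$ is bounded but with values outside $(0,1)$, Theorem~\ref{thm:effpara_2} does not apply as stated. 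You anticipate the analogous codomain mismatch for Proposition~\ref{prop:JT12_zeroest}, but the same issue bites earlier and more seriously for the parameterization step, and your proposed fix (restricting to a bounded height box) does not resolve it: parameterization must handle the whole surface, not just the height-$\le T$ points, and the effective constant $B'$ must be independent of $T$.

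The paper fills this gap as follows. Before parameterizing, it forms $G(x,y)=F(x,y)(F(x,y)^2-1)$, which lies in $IRP(B_1)$ for some $B_1$ effective in $B$, and uses Lemma~\ref{lem:effzerosetdecomp} to decompose $V(G)$. This yields an effective cell decomposition $\mathcal{C}$ of $(g,h)_{(a,b)}$ such that on each open cell $\mathfrak{C}\in\mathcal{C}$ the function $F$ takes values in exactly one of $(-\infty,-1)$, $(-1,0)$, $(0,1)$, $(1,\infty)$. Composing $F\rst{\mathfrak{C}}$ with the appropriate map $x\mapsto \pm x^{\pm 1}$ then produces $\widetilde{F}_{\mathfrak{C}}\colon \mathfrak{C}\to(0,1)$ in $IRP(B_4)$, to which Theorem~\ref{thm:effpara_2} and Proposition~\ref{prop:JT12_zeroest} now legitimately apply. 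Since these inversions preserve height up to a harmless factor, the rational-point count transfers back to $F$. Once you insert this reduction, the rest of your argument (choice of $d$, hypersurface covering, zero estimate, absorption of $(\log T)^\gamma$ into $T^{\epsilon/2}$) is exactly what the paper does.
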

\begin{proof}
We start by applying Proposition \ref{prop:PWmain} with $k=2$, $n=3$ and $d$ an integer large enough that $\epsilon(2,3,d)\leq \epsilon/2$ (so $d$ is effective in $\e$). Let $r=r(2,3,d)$ and $C=C(2,3,d)$. As remarked above, $r$ and $C$ are therefore both effective in $\epsilon$.

Define the function $G\colon (g,h)_{(a,b)}\to \bbR$ by \[G(x,y) = F(x,y)(F(x,y)^2 - 1).\] Since $F$ lies in $IRP(B)$, there exists a positive real number $B_1$ which is bounded effectively in $B$ such that $G$ lies in $IRP(B_1)$. By repeated application of Lemma \ref{lem:effzerosetdecomp} (see Remark \ref{rmk:effzerosetdecomp}), we see that there is a positive real number $B_2$, bounded effectively in $B$, such that $V(G)$ can be decomposed into at most $B_2$ graphs of functions lying in $IRP(B_2)$, points and vertical lines. These give us a cell decomposition $\mc{C}$ of $(g,h)_{(a,b)}$ such that $F$ restricted to each of the open cells in $\mc{C}$ takes values in exactly one of $(-\infty,-1), (-1,0), (0,1), (1, \infty)$, and a positive real number $B_3$ effective in $B$ such that the cardinality of $\mc{C}$ is bounded by $B_3$.

For each open cell $\mfk{C}$ in $\mc{C}$, compose $F\rst{\mfk{C}}$ with whichever of the four inversion maps $x \mapsto \pm x^{\pm 1}$ is appropriate to obtain a function $\widetilde{F}_\mfk{C} \colon \mfk{C} \to (0,1)$ such that, for each $T \geq 1$, $\#\text{graph}(\widetilde{F}_\mfk{C})^{\text{trans}} (\bbQ,T) = \#\text{graph}(F\rst{\mfk{C}})^{\text{trans}} (\bbQ,T)$. Such an open cell $\mfk{C}$ is of the form $(\widetilde{g},\widetilde{h})_{(\widetilde{a},\widetilde{b})}$, for real numbers $\widetilde{a}$, $\widetilde{b}$ such that $a\leq \widetilde{a}< \widetilde{b} \leq b$ and functions $\widetilde{g}, \widetilde{h}\colon (\widetilde{a},\widetilde{b}) \to (0,1)$ in $IRP(B_2)$, and there exists a positive real number $B_4$ which is bounded effectively in $B$ such that $\widetilde{F}_{\mfk{C}}$ lies in $IRP(B_4)$. Therefore, we may apply Theorem \ref{thm:effpara_2} to $\widetilde{F}_\mfk{C}$ to obtain a positive real number $B_5$, effective in $B$ and $r$, and hence in $B$ and $\epsilon$, and an $r$-parameterization $\mathcal{S}$ of $\text{graph}(\widetilde{F}_\mfk{C})$ lying in $IRP(B_5)$ with cardinality bounded by $B_5$.

Let $T\geq 1$. By Proposition \ref{prop:PWmain}, $\text{graph}(\widetilde{F}_\mfk{C})(\bbQ,T)$ is contained in the union of at most $B_5\cdot C\cdot T^{\epsilon/2}$ algebraic surfaces of degree at most $d$, for each open cell $\mfk{C}$ in $\mc{C}$.

Fix such a surface, $V(P)$, say. We now apply Proposition \ref{prop:JT12_zeroest} to obtain effective constants $N=N(B)$, $\g=\g(B)$ and $c_{1}$, and a polynomial $Q \colon \bbR \to \bbR$ over $\bbR$ of degree $N$ with coefficients bounded effectively in $B$, such that
\[
\# (\text{graph}(\widetilde{F}_\mfk{C})\cap V(P))^{\text{trans}}(\bbQ,T) \leq c_{1} Q(d) (\log T)^{\gamma},
\]
for each open cell $\mfk{C}$ in $\mc{C}$. Since $Q(d)$ is a polynomial in $d$ with coefficients which are bounded effectively in $B$, $Q(d)$ is in fact just a constant effective in $B$ and in $\epsilon$.

Now take $T_{0}$ such that $(\log T)^{\gamma}< T^{\epsilon/2}$ for all $T \geq T_{0}$, which we can do effectively in terms of $\gamma$ and $\epsilon$, and hence in terms of $B$ and $\epsilon$. Hence, for $T \geq T_{0}$, we have that, for each open cell $\mfk{C}$ in $\mc{C}$,
\[
\# (\text{graph}(\widetilde{F}_\mfk{C})\cap V(P))^{\text{trans}}(\bbQ,T) \leq c_{2} T^{\epsilon/2},
\]
where $c_{2}$ is effective in terms of $B$ and $\epsilon$. Therefore
\[
\#\text{graph}(F)^{\text{trans}} (\bbQ,T) \leq B_{3} \cdot B_{5}\cdot C\cdot c_{2}\cdot T^{\epsilon},
\]
for $T \geq T_{0}$. Finally, increasing the constant at the front to compensate for $T_{0}$, which we can do effectively, we see that a bound of the required form holds for all $T\geq 1$.
\end{proof}

We now use this result to obtain a counting statement in the unrestricted setting. Given a function lying in $IP$, we note that it is possible to shrink its domain in such a way that the restricted function obtained lies in $IRP$, the restricted domain approximates the original domain to an arbitrary extent, and the complexities of the functions involved in defining these two domains are the same. Applying Theorem \ref{thm:main_surface_res_inbox} to the restricted function obtained in this way, we get a bound on rational points for its graph that does not depend on the particular domain chosen. Since all rational points of a given height that lie on the graph of the unrestricted function also lie on the graph of such a restricted function, we may choose a suitable restriction of this kind and hence obtain a bound on rational points in the unrestricted setting.

\begin{theorem} \label{thm:main_surface_unres}
Let $a$, $b$ be real numbers such that $0\leq a < b \leq 1$ and let $B$ and $\e$ be positive real numbers. Suppose that $g$ is either a function $g:(a,b)\to (0,1)$ lying in $IP(B)$, or is the constant function $0$ defined on $(a,b)$, and suppose that $h$ is either a function $h:(a,b)\to(0,1)$ lying in $IP(B)$, or is the constant function $1$ defined on $(a,b)$. Suppose moreover that $g<h$, and that $F:(g,h)_{(a,b)}\to \bbR$ lies in $IP(B)$. 
There exists a positive real number $c$, bounded effectively in $B$ and $\epsilon$, such that, for all $T\geq 1$,
\[
\#\text{graph}(F)^{\text{trans}} (\bbQ,T) \leq cT^{\epsilon}.
\]
\end{theorem}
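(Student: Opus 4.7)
The plan is to reduce to Theorem \ref{thm:main_surface_res_inbox} by constructing, for each $T \geq 1$, a sub-cell $U_T \sset (g,h)_{(a,b)}$ whose closure lies in $(g,h)_{(a,b)}$, which contains every rational point of height at most $T$ lying in $(g,h)_{(a,b)}$, and on which the restrictions of $g$, $h$, and $F$ all lie in $IRP(B')$ for some $B'$ effectively computable from $B$ alone (crucially, independently of $T$).

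To construct $U_T$, first observe that the set of rationals of height at most $T$ in $(a,b)$ is finite, so we may choose real numbers $a_T, b_T$ with $a < a_T < b_T < b$ such that every such rational lies in $(a_T, b_T)$. Next, the set of rational pairs $\langle p, q \rangle$ of height at most $T$ with $p \in (a_T, b_T)$ and $q \in (g(p),h(p))$ is also finite, so we may choose $\delta_T > 0$ small enough that $g(p) + \delta_T < q < h(p) - \delta_T$ for every such pair, and also $2\delta_T < \min_{[a_T,b_T]}(h - g)$. Setting $g_T := g\rst{(a_T,b_T)} + \delta_T$, $h_T := h\rst{(a_T,b_T)} - \delta_T$, and $U_T := (g_T, h_T)_{(a_T, b_T)}$ (with the analogous convention when $g$ or $h$ is constant), the closure of $U_T$ lies in $(g,h)_{(a,b)}$, and $U_T$ contains every rational point of height at most $T$ lying in $(g,h)_{(a,b)}$.

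By Corollary \ref{cor:resIP=IRP}, the restrictions $g\rst{(a_T,b_T)}$ and $h\rst{(a_T,b_T)}$ lie in $IRP(B_1)$ for some $B_1$ effective in $B$ and independent of $T$; since $IRP$ is effectively closed under addition of constants, $g_T$ and $h_T$ lie in $IRP(B_2)$ for some $B_2$ effective in $B$. Applying Corollary \ref{cor:resIP=IRP} once more yields $F\rst{U_T} \in IRP(B_3)$ for some $B_3$ also effective in $B$ and independent of $T$. Theorem \ref{thm:main_surface_res_inbox} applied to $F\rst{U_T}$ on the cell $U_T$ then produces a constant $c > 0$, effective in $\max(B_2,B_3)$ and $\epsilon$, hence in $B$ and $\epsilon$ alone, such that $\#\text{graph}(F\rst{U_T})^{\text{trans}}(\bbQ, T) \leq cT^\epsilon$.

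To conclude, since $\text{graph}(F\rst{U_T}) \sset \text{graph}(F)$ the inclusion $\text{graph}(F\rst{U_T})^{\text{alg}} \sset \text{graph}(F)^{\text{alg}}$ is immediate from the definition of the algebraic part, so $\text{graph}(F)^{\text{trans}} \cap \text{graph}(F\rst{U_T}) \sset \text{graph}(F\rst{U_T})^{\text{trans}}$, and every rational point of height at most $T$ on $\text{graph}(F)$ lies in $\text{graph}(F\rst{U_T})$ by the construction of $U_T$. The main obstacle is ensuring that the complexity of the restricted data is controlled uniformly in $T$: although $U_T$ may shrink in arithmetically uncontrolled ways as $T$ grows (in particular, $\delta_T$ need not admit an effective bound in $T$), this is precisely the role of Corollary \ref{cor:resIP=IRP}, which provides a $B'$ depending only on $B$.
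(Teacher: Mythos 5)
Your proposal is correct and follows essentially the same route as the paper: for each $T$ one shrinks the cell (keeping all rational points of height at most $T$), invokes Corollary \ref{cor:resIP=IRP} to place the restricted data in $IRP(B')$ with $B'$ independent of the shrinkage, and applies Theorem \ref{thm:main_surface_res_inbox}, whose constant is therefore independent of $T$. Your treatment is in fact slightly more explicit than the paper's on two minor points -- verifying that the shifted boundary functions $g_T,h_T$ lie in $IRP$ with effectively bounded complexity, and passing from the count on $\text{graph}(F\rst{U_T})^{\text{trans}}$ to that on $\text{graph}(F)^{\text{trans}}$ via the inclusion of algebraic parts -- but the argument is the same.
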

\begin{proof}
Fix $T\ge 1$ and $\epsilon>0$. Since there are only finitely many rational points of height at most $T$ inside $(g,h)_{(a,b)}$, we may find a positive real number $\delta$ small enough that $a+\delta<b-\delta$, such that $g+\delta<h-\delta$ holds on $(a+\delta,b-\delta)$ and such that all rational points of height at most $T$ inside $(g,h)_{(a,b)}$ in fact lie inside the cell $C_\delta=(g+\delta,h-\delta)_{(a+\delta,b-\delta)}$. We then have
\[\#\text{graph}(F\rst{C_{\delta}})^{\text{trans}} (\bbQ,T) = \#\text{graph}(F)^{\text{trans}} (\bbQ,T) .\]
The function $F\rst{C_\delta}$ is in $IRP(B)$ by Corollary  \ref{cor:resIP=IRP}. So we can apply Theorem \ref{thm:main_surface_res_inbox}. The constant $c$ this provides is independent of $\delta$, and so independent of $T$, and this proves the theorem.
\end{proof}

Finally, applying the usual inversion process in combination with effective monotonicity (Proposition \ref{prop:effmon}) we obtain the most general form of our result. 
\begin{corr} \label{cor:main_surface_unres}
Let $(a,b)$ be an interval in $\bbR$, with $a \in \{-\infty\}\cup\bbR$ and $b\in \bbR\cup\{+\infty\}$, and let $B$ and $\e$ be positive real numbers. Suppose that $g$ is either a function $g:(a,b)\to \bbR$ lying in $IP(B)$, or is the constant function $-\infty$ defined on $(a,b)$, and suppose that $h$ is either a function $h:(a,b)\to\bbR$ lying in $IP(B)$, or is the constant function $+\infty$ defined on $(a,b)$. Suppose moreover that $g<h$, and that $F:(g,h)_{(a,b)}\to \bbR$ lies in $IP(B)$.
There exists a positive real number $c$, bounded effectively in $B$ and $\epsilon$, such that, for all $T\geq 1$,
\[
\#\text{graph}(F)^{\text{trans}} (\bbQ,T) \leq cT^{\epsilon}.
\]
\end{corr}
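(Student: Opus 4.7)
The plan is to reduce Corollary \ref{cor:main_surface_unres} to Theorem \ref{thm:main_surface_unres} via the standard height-preserving inversion trick, combined with effective monotonicity and the effective bounds on zero sets of $IP$ functions. All complexities and cardinalities in what follows remain bounded effectively in $B$.

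First I would use Proposition \ref{prop:effmon} to decompose $(a,b)$ into finitely many open subintervals on which each of $g,h$ is monotonic or constant, and further subdivide using Proposition \ref{prop:impdefzerosetbounds} applied to $g-c$ and $h-c$ for $c\in\{-1,0,1\}$, also splitting $(a,b)$ at whichever of $-1,0,1$ lies inside it. On each resulting subinterval $(a',b')$, the base lies in a single one of the four ranges $(-\infty,-1)$, $(-1,0)$, $(0,1)$, $(1,\infty)$, and the values of $g$ (respectively $h$) either stay in a single such range or are identically $-\infty$ (respectively $+\infty$). The cell $(g,h)_{(a',b')}$ may still straddle several of the four corresponding horizontal strips, so I would further slice it by intersecting with each strip, using the constants $-1,0,1$ as auxiliary upper or lower boundaries, to obtain finitely many sub-cells $(g^*,h^*)_{(a',b')}$ each lying inside a single cell of the partition of $\bbR^2$ given by $x,y\in\{-1,0,1\}$.

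To each such sub-cell I would then apply a map $(x,y)\mapsto (\varphi(x),\psi(y))$ with $\varphi,\psi\in\{x,-x,x^{-1},-x^{-1}\}$ chosen so that the sub-cell is carried bijectively onto a cell inside $(0,1)^2$ (with $\pm\infty$ boundaries being sent to the constant boundaries $0$ or $1$ as appropriate, swapping the roles of $g^*$ and $h^*$ if the chosen inversion reverses orientation). Each such map is Pfaffian of absolute complexity, preserves the algebraic/transcendental decomposition since it is algebraic, and preserves rational height, because whenever $p,q$ are coprime with $\max\{|p|,|q|\}\leq T$ we have $H(q/p)\leq T$ as well. Composing $F$, $g^*$ and $h^*$ with these inversions yields a new function $\widetilde F\in IP(B')$, with $B'$ effective in $B$, defined on a cell $(\widetilde g,\widetilde h)_{(\widetilde a,\widetilde b)}\subseteq(0,1)^2$ satisfying the hypotheses of Theorem \ref{thm:main_surface_unres}. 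Applying that theorem to each of the finitely many sub-cells and summing the resulting bounds gives the required $cT^\epsilon$ estimate, after absorbing the finitely many rational points lost at the splitting boundaries into the leading constant.

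The main obstacle is essentially bookkeeping: verifying that each of the subtractions, restrictions and compositions used above preserves membership in $IP$ with complexity effective in $B$. This follows from effective closure of $IP$ under subtraction of constants and composition with rational maps; such closure can be established directly as in Section \ref{sec:prelims} for $IRP$, or alternatively by passing to bounded subdomains via Corollary \ref{cor:resIP=IRP}, working inside $IRP$, and then using the inclusion $IRP(B)\subseteq IP(B)$.
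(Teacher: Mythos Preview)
Your proposal is correct and follows the same approach the paper indicates: the paper gives only the one-line remark that ``applying the usual inversion process in combination with effective monotonicity (Proposition \ref{prop:effmon}) we obtain the most general form of our result'', and your argument is a faithful fleshing-out of exactly this reduction to Theorem \ref{thm:main_surface_unres}. The only point worth noting is that Proposition \ref{prop:effmon} is stated for bounded $f$ on a bounded interval, so in your write-up you should either observe that its proof (via Proposition \ref{prop:impdefzerosetbounds} applied to $f'$) goes through without those hypotheses, or perform the $x$-axis inversion first before invoking monotonicity.
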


\newcommand{\SortNoop}[1]{}\def\cprime{$'$} \def\cprime{$'$}

\end{document}